\crefname{assumption}{Assumption}{Assumptions}
\crefname{figure}{Figure}{Figures}
\theoremstyle{plain}
\newtheorem{theorem}{Theorem}[section]
\newtheorem{corollary}[theorem]{Corollary}
\newtheorem{lemma}[theorem]{Lemma}
\newtheorem{proposition}[theorem]{Proposition}
\numberwithin{equation}{section}
\theoremstyle{definition}
\theoremstyle{remark}
\newtheorem{remark}[theorem]{Remark}
\newtheorem{assumption}[theorem]{Assumption}
\setlist[itemize]{leftmargin=.5in}
\setlist[enumerate]{leftmargin=.5in,topsep=3pt,itemsep=3pt,label=(\roman*)}
\newcommand*\samethanks[1][\value{footnote}]{\footnotemark[#1]}
\newcommand{\email}[1]{\href{#1}{#1}}
\newcommand{\TheTitle}{Nonparametric estimation of homogenized invariant measures from multiscale data via Hermite expansion}
\newcommand{\TheAuthors}{}
\title{\TheTitle}
\author{Jaroslav I. Borodavka \thanks{Scientific Computing Center, Karlsruhe Institute of Technology, Karlsruhe, Germany, \email{jaroslav.borodavka@kit.edu}, \email{sebastian.krumscheid@kit.edu}.}
\and Max Hirsch \thanks{Department of Mathematics, University of California Berkeley, Berkeley, CA, USA, \email{mhirsch@berkeley.edu}.}
\and Sebastian Krumscheid \samethanks[1]
\and Andrea Zanoni \thanks{Centro di Ricerca Matematica Ennio De Giorgi, Scuola Normale Superiore, Pisa, Italy, \email{andrea.zanoni@sns.it}.}}
\date{}
\DeclareMathOperator{\sign}{sgn}
\newcommand{\abs}[1]{\left\lvert#1\right\rvert}
\newcommand{\norm}[1]{\left\|#1\right\|}
\renewcommand{\Pr}{\mathbb{P}}
\newcommand{\V}{\mathcal{V}}
\newcommand{\N}{\mathbb{N}}
\newcommand{\period}{\mathbb{L}}
\newcommand{\R}{\mathbb{R}}
\newcommand{\C}{\mathbb{C}}
\newcommand{\epl}{\varepsilon}
\newcommand{\defeq}{\coloneqq}
\newcommand{\eqdef}{\eqqcolon}
\newcommand{\E}{\operatorname{\mathbb{E}}}
\renewcommand{\d}{\mathrm{d}}
\newcommand{\dd}{\,\mathrm{d}}
\definecolor{shade}{RGB}{100, 100, 100}
\definecolor{bordeaux}{RGB}{128, 0, 50}
\newcommand{\BigO}[1]{\mathcal{O} \left( #1 \right)}
\newcommand{\Cphi}{\| \varphi \|_\infty}
\newcommand{\Cm}[1]{C_{m}^{(#1)}}
\newcommand{\Cf}[1]{C_f^{(#1)}}
\definecolor{leg1}{RGB}{0,114,189}
\definecolor{leg2}{RGB}{217,83,25}
\definecolor{leg3}{RGB}{237,177,32}
\definecolor{leg4}{RGB}{126,47,142}
\definecolor{leg5}{RGB}{119,172,48}
\definecolor{leg21}{RGB}{62,38,169}
\definecolor{leg22}{RGB}{46,135,247}
\definecolor{leg23}{RGB}{55,200,151}
\definecolor{leg24}{RGB}{254,195,56}
\begin{document}
	
\maketitle	

\begin{abstract}
We consider the problem of density estimation in the context of multiscale Langevin diffusion processes, where a single-scale homogenized surrogate model can be derived. In particular, our aim is to learn the density of the invariant measure of the homogenized dynamics from a continuous-time trajectory generated by the full multiscale system. We propose a spectral method based on a truncated Fourier expansion with Hermite functions as orthonormal basis. The Fourier coefficients are computed directly from the data owing to the ergodic theorem. We prove that the resulting density estimator is robust and converges to the invariant density of the homogenized model as the scale separation parameter vanishes, provided the time horizon and the number of Fourier modes are suitably chosen in relation to the multiscale parameter. The accuracy and reliability of this methodology is further demonstrated through a series of numerical experiments.
\end{abstract}

\textbf{AMS subject classifications.} 42C10, 60H10, 60J60, 62M20, 62G05.

\textbf{Key words.} Ergodicity, Fourier expansion, Hermite functions, homogenization, invariant measure, multiscale Langevin dynamics.

\section{Introduction} \label{sec:intro}

Stochastic systems with multiscale structure arise naturally in a wide range of applications across the physical, biological, and engineering sciences. A typical example is the overdamped Langevin diffusion with both slow and fast scales, where the dynamics evolve under the influence of a potential landscape with multiple scales. For such systems, direct analysis or simulation of the full multiscale model often becomes computationally prohibitive. A widely used strategy in this context is homogenization theory, which enables replacing the original multiscale model with an effective single-scale surrogate model that accurately approximates the slow dynamics in the limit of vanishing scale separation parameter \cite{BLP78,PaS08}.

In many practical situations, data are collected from the full multiscale dynamics, while the modeling objective is to infer effective (or homogenized) properties that emerge asymptotically. This discrepancy gives rise to the problem of model misspecification, originally investigated in \cite{PaS07} in the context of the maximum likelihood estimator (MLE) for Langevin dynamics. It was shown that the MLE becomes (even asymptotically) biased when applied directly to multiscale data without appropriate preprocessing. Since then, various model calibration methods have been proposed to mitigate this issue, including subsampling strategies \cite{PaS07,ABJ13}, techniques based on martingale properties and conditional expectations \cite{KPK13,KKP15,Kru18}, combinations of filtered data and MLEs \cite{AGP23,GaZ23}, eigenfunction-based estimators \cite{APZ22,Zan23}, stochastic gradient descent in continuous time \cite{HiZ24}, and more recently, minimum distance estimators \cite{BKP25}.

In this work, we focus on the long-time behavior of stochastic differential equations (SDEs), specifically on their invariant measures, which encode the equilibrium statistics of the system. The problem of estimating the invariant density from data has previously been studied for multidimensional ergodic SDEs; see, e.g., \cite{Bia07}. Related approaches include high-order numerical schemes \cite{AVZ14}, adaptive methods under anisotropic Hölder regularity \cite{Str18}, and estimation from discrete-time observations with controlled discretization error \cite{AmG23}. Moreover, density estimation for jump diffusions is treated in \cite{AmN22}, and a central limit theorem for one-dimensional kernel estimators is established in \cite{Pri01}. However, to the best of our knowledge, the problem of learning the invariant density in a multiscale setting has not been addressed yet. 

Our goal is to estimate the invariant density of a homogenized one-dimensional overdamped Langevin diffusion, given a single long trajectory generated by the original multiscale system. Although the homogenized model can often be derived analytically via homogenization theory, its invariant measure is not directly observable from multiscale data, especially when the scale separation is finite. In fact, naïve density estimation from such data captures fast-scale oscillations, resulting in a biased estimate that fails to reflect the coarse-grained behavior of interest. To overcome this challenge, we propose a nonparametric spectral estimator based on a truncated Fourier expansion using Hermite functions as an orthonormal basis. By leveraging the ergodicity of the multiscale system, the estimator approximates the Fourier coefficients through time averages computed along the trajectory. Crucially, our method does not require explicit knowledge of either the homogenized SDE or the multiscale one.

We show that the estimator is robust to model misspecification and converges to the true invariant density of the homogenized model under suitable conditions on the time horizon and the number of retained Fourier modes, relative to the multiscale parameter. In particular, we provide a convergence result establishing the asymptotic unbiasedness of the estimator as the scale separation parameter tends to zero, provided that the number of Fourier modes grows slowly enough with respect to the scale separation parameter and the observation time is large enough. Our theoretical analysis relies on properties of Hermite functions, as well as recent limit theorems for multiscale diffusions with time horizons depending on the scale parameter \cite{BK25}. Numerical experiments confirm that, when appropriately tuned, the estimator accurately recovers the invariant density of the homogenized model while ignoring irrelevant fine-scale fluctuations. We believe that this methodology not only provides a systematic solution to the problem of invariant density estimation in multiscale systems but also opens the door to novel approaches for nonparametric estimation of the homogenized surrogate model in the multiscale data setting.

\paragraph{Outline.} The remainder of the paper is structured as follows. In \cref{sec:problem}, we present the model and the density estimation problem and outline our approach based on Fourier series with Hermite functions. In \cref{sec:numerics}, we describe various numerical experiments that illustrate the effectiveness of our method. In \cref{sec:analysis}, we provide a theoretical analysis of the proposed estimator, establishing its asymptotic unbiasedness under certain conditions. Finally, in \cref{sec:conclusion} we discuss potential directions for future work.

\section{Problem setting} \label{sec:problem}

We consider the one-dimensional multiscale overdamped Langevin diffusion defined by the following SDE in the time interval $[0,T]$ with deterministic initial condition $x_0 \in \R$
\begin{equation} \label{eq:SDE_multiscale}
\d X_t^\epl = - V'(X_t^\epl) \dd t - \frac1\epl p' \left( \frac{X_t^\epl}\epl \right) \dd t + \sqrt{2\sigma^2} \dd W_t, \quad X_0^\epl = x_0,
\end{equation}
where $V,p \colon \R \to \R$ are the confining slow-scale and $\period$-periodic fast-scale potentials, respectively, and $W := \left( W(t), \mathcal{F}_t \right)_{t \in [0, \infty)}$ is a standard one-dimensional Brownian motion defined on an underlying probability space $(\Omega, \mathcal F, \Pr)$ equipped with the natural filtration $\left( \mathcal{F}_t \right)_{t \in [0, \infty)}$. Moreover, $\epl>0$ is the scale separation parameter and $\sigma > 0$ is the diffusion coefficient. The confining slow-scale and fast-scale potentials satisfy the following conditions.
\begin{assumption} \label{as:potentials}
The potentials $V$ and $p$ are such that:
\begin{enumerate}[leftmargin=*,label=\roman*)]
\item $V, p \in C^2(\R)$ and, without loss of generality, it holds $V(0) = 0$ and $p(0) = 0$;
\item $V'$ is globally Lipschitz on $\R$ with Lipschitz constant $L_V> 0 $;
\item there exist $\beta > 0$ and $R \ge 1$ such that for all $x \in \R$ with $\abs{x} \ge R$
\begin{equation}    
- \sign(x) V'(x) \le -\beta \abs{x}.
\end{equation}
\end{enumerate}
\end{assumption}

Due to the theory of homogenization (see, e.g., \cite[Chapter 3]{BLP78} or more recently \cite{RoL21}), there exists an effective SDE 
\begin{equation} \label{eq:SDE_homogenized}
\d X_t = - \V'(X_t) \dd t + \sqrt{2\Sigma} \dd W_t, \quad X_0 = x_0,
\end{equation}
such that $X_t^\epl$ converges to $X_t$ as $\epl \to 0$ in law as random processes in $\mathcal C^0([0,T])$. The homogenized drift term and diffusion coefficient are defined as $\V'(x) = \mathcal K V'(x)$ and $\Sigma = \mathcal K \sigma^2$, where $\mathcal K > 0$ is given by $\mathcal K = \period^2/(\Pi \cdot \widehat \Pi)$, with
\begin{equation} \label{eq:Pi_def}
\Pi = \int_0^\period e^{-\frac1\sigma p(y)} \dd y \qquad \text{and} \qquad \widehat \Pi = \int_0^\period e^{\frac1\sigma p(y)} \dd y.
\end{equation}
In the dissipative setting of \cite[Assumptions 3.1]{PaS07}, which is implied by our \cref{as:potentials}, it is proved in \cite[Propositions 5.1 and 5.2]{PaS07} that the processes $X_t^\epl$ and $X_t$ in the equations \eqref{eq:SDE_multiscale} and \eqref{eq:SDE_homogenized} are geometrically ergodic with unique invariant measures
\begin{equation} \label{eq:invariant_density}
\rho^\epl(x) = \frac1{Z^\epl} e^{- \frac1{\sigma^2} \left( V(x) + p \left( \frac{x}\epl \right) \right)} \qquad \text{and} \qquad \rho(x) = \frac1Z e^{- \frac1\Sigma \V(x)} = \frac1Z e^{- \frac1{\sigma^2} V(x)},
\end{equation}
where 
\begin{equation}
Z^\epl = \int_\R e^{- \frac1\sigma \left( V(x) + p \left( \frac{x}\epl \right) \right)} \dd x  \qquad \text{and} \qquad Z = \int_\R e^{- \frac1\Sigma \V(x)} \dd x = \int_\R e^{- \frac1{\sigma^2} V(x)} \dd x.
\end{equation}
We are interested in the problem of inferring the invariant density $\rho$ of the homogenized SDE \eqref{eq:SDE_homogenized} given a continuous-time trajectory $(X_t^\epl)_{t\in[0,T]}$ originated from the multiscale dynamics \eqref{eq:SDE_multiscale}.

\begin{remark} \label{rem:multidimensional}
In this work, we focus on one-dimensional stochastic processes in order to present our method more clearly and to carry out a rigorous theoretical analysis. However, the proposed approach can, in principle, be extended to multidimensional Langevin diffusions by employing tensor products of Hermite functions as basis functions. As an illustration, we include a numerical example for a two-dimensional model in \cref{sec:num2D}, although a convergence analysis of the estimator in higher dimensions is left for future work. We note that for $d$-dimensional multiscale diffusions with a periodic fast-scale potential $p$, where $p$ has period $\period_i$ in the $i$-th coordinate direction, the corresponding homogenization constant $\mathcal{K}$ becomes a matrix in $\R^{d \times d}$. It is given by
\begin{equation}
\mathcal{K} = \int_{\bigtimes_{i=1}^d [0, \period_i]} (I_d + \nabla \Phi(y))(I_d + \nabla \Phi(y))^\top \nu(y) \dd y,
\end{equation}
where $I_d$ is the identity matrix and $\nabla \Phi$ denotes the Jacobian of the solution $\Phi \colon \R^d \to \R^d$ to the vector-valued cell problem
\begin{equation}
- \nabla \Phi \nabla p + \sigma \Delta \Phi = \nabla p,
\end{equation}
defined on the domain $\bigtimes_{i=1}^d [0, \period_i]$ with periodic boundary conditions. Finally, the function $\nu$ is the associated invariant density and takes the form
\begin{equation}
\nu(y) = \frac{e^{-\frac{1}{\sigma} p(y)}}{\int_{\bigtimes_{i=1}^d [0, \period_i]} e^{-\frac{1}{\sigma} p(z)} \dd z}.
\end{equation}
\end{remark}

\subsection{Nonparametric estimation of the invariant density}

Inspired by \cite[Section 4]{CGL24}, we consider an orthonormal basis $\{ \psi_n \}_{n=0}^{\infty}$ of $L^2(\R)$ and represent the invariant density $\rho$ via its Fourier expansion. We then construct an estimator by truncating this expansion and approximating the corresponding Fourier coefficients using the available data $(X_t^\epl)_{t\in[0,T]}$. 

In particular, we focus on the orthonormal basis of Hermite functions defined by
\begin{equation}
\psi_n(x) = \frac1{\sqrt{\sqrt\pi 2^n n!}} e^{- \frac{x^2}2} H_n(x),
\end{equation}
where $H_n$ denotes the $n$-th physicist's Hermite polynomial; see, e.g., \cite[Chapter V]{Sze75} and \cite{CGO21}. The Fourier expansion of $\rho$ with respect to this basis is
\begin{equation}
\rho(x) = \sum_{n=0}^\infty \alpha_n \psi_n(x), \qquad \text{with} \qquad \alpha_n = \int_\R \psi_n(x) \rho(x) \dd x.
\end{equation}
Recall that our goal is to approximate $\rho$ from a trajectory $(X_t^\epl)_{t\in[0,T]}$ solving equation \eqref{eq:SDE_multiscale}. Hence, due to ergodicity and homogenization, each Fourier coefficient $\alpha_n$ can be approximated by $\widehat \alpha_n^{T, \epl}$ given by
\begin{equation}
\widehat \alpha_n^{T, \epl} = \frac1T \int_0^T \psi_n(X_t^\epl) \dd t,
\end{equation}
since it holds
\begin{equation}
\lim_{\epl \to 0} \lim_{T \to \infty} \frac1T \int_0^T \psi_n(X_t^\epl) \dd t = \lim_{\epl \to 0} \int_\R \psi_n(x) \rho^\epl(x) \dd x = \int_\R \psi_n(x) \rho(x) \dd x, \qquad a.s.,
\end{equation}
which implies
\begin{equation}
\lim_{\epl \to 0} \lim_{T \to \infty} \widehat \alpha_n^{T,\epl} = \alpha_n, \qquad a.s.
\end{equation}
Hence, we introduce the estimator $\widehat \rho_N^{T,\epl}$ for the invariant density $\rho$, constructed by truncating the Fourier series and retaining only the first $N$ approximated coefficients
\begin{equation} \label{eq:estimator_density}
\widehat \rho_N^{T,\epl}(x) = \sum_{n=0}^{N-1} \widehat \alpha_n^{T,\epl} \psi_n(x).
\end{equation}
Note that the estimator $\widehat \rho_N^{T,\epl}$ depends on three key parameters: the scale separation $\epl$, the trajectory length $T$, and the number of Fourier coefficients $N$. Among these, $\epl$ is typically unknown, $T$ reflects the amount of available data, and $N$ is a tunable parameter of the method. Intuitively, one might expect the estimator to converge to the true invariant measure $\rho$ in the limit of infinite data ($T \to \infty$), infinitely many Fourier modes ($N \to \infty$), and vanishing scale separation ($\epl \to 0$). However, this convergence does not always hold. In fact, the estimator is asymptotically unbiased only under specific conditions. In particular, the number of Fourier modes $N$ should not grow too quickly relative to the scale separation parameter $\epl$. That is, $N$ should be carefully chosen as a function of $\epl$. On the other hand, the final time $T$ must diverge sufficiently fast. These requirements are made precise in the next theorem, which constitutes the main result of this work and whose proof is postponed to \cref{sec:analysis}.

\begin{theorem} \label{thm:convergence}
Let $\widehat \rho_N^{T,\epl}$ be the estimator defined in equation \eqref{eq:estimator_density}, and let $\rho$ denote the invariant density given in equation \eqref{eq:invariant_density}. Let \cref{as:potentials} hold and assume that the number of Fourier modes and the final time of observation scale with $\epl$ as
\begin{equation}
N = N(\epl) = \left\lfloor \frac{\pi^2}{\gamma \period^2 \epl^2} \right\rfloor, \quad\text{and}\quad T = T(\epl) = \kappa\epl^{-\zeta},
\end{equation}
respectively, for some $\kappa > 0$, where the parameters $\gamma$ and $\zeta$ satisfy
\begin{equation}
\gamma > \begin{cases}
3 + \log(8), & \text{if } \sigma = 1, \\
c^2 + \max\left\{16e^{\frac32}, \frac{c}{4}\left(\log\abs{\frac{2}{c}-1}+2\log(4)\right)\right\}, & \text{if } \sigma \ne 1,
\end{cases}
\qquad
\zeta > \begin{cases}
5, & \text{if } r \ge l, \\
5\frac{l}{r}, & \text{if } r < l,
\end{cases}
\end{equation}
with
\begin{equation}
c = \frac{\sigma^2 + 1}{\sigma^2}, \qquad l = \frac{L_V + \abs{V'(0)}}{\sigma^2}, \qquad r = \frac{\beta}{\sigma^2}.
\end{equation}
Then, it holds that
\begin{equation}
\lim_{\epl \to 0} \E \left[ \norm{\widehat \rho_{N(\epl)}^{T(\epl),\epl} - \rho}_{L^2(\R)}^2 \right] = 0.
\end{equation}
\end{theorem}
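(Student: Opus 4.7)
The plan is to exploit the orthonormality of the Hermite basis to decompose the mean-squared $L^2$ error into three classical pieces,
\[
\E\norm{\widehat \rho_N^{T,\epl} - \rho}_{L^2(\R)}^2 = \underbrace{\sum_{n\ge N}\alpha_n^2}_{\text{truncation}} \;+\; \sum_{n=0}^{N-1}\bigl(\E[\widehat\alpha_n^{T,\epl}] - \alpha_n\bigr)^2 \;+\; \sum_{n=0}^{N-1}\Var\bigl(\widehat\alpha_n^{T,\epl}\bigr),
\]
and then to show that each piece vanishes as $\epl \to 0$ under the scalings $N(\epl) \sim \epl^{-2}$ and $T(\epl) \sim \epl^{-\zeta}$. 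The truncation error is the easiest: since $\rho = Z^{-1}\exp(-V/\sigma^2)$ inherits smoothness and Gaussian-like tail decay from \cref{as:potentials}, its Hermite coefficients $\{\alpha_n\}$ decay faster than any inverse polynomial, so $\sum_{n\ge N(\epl)}\alpha_n^2 \to 0$ without difficulty.

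For the bias of each coefficient I would split
\[
\E[\widehat\alpha_n^{T,\epl}] - \alpha_n = \Bigl(\tfrac{1}{T}\int_0^T\E[\psi_n(X_t^\epl)]\dd t - \int_\R \psi_n\,\rho^\epl\dd x\Bigr) + \int_\R \psi_n\,(\rho^\epl - \rho)\dd x .
\]
Geometric ergodicity of the multiscale process, available through \cite{PaS07}, controls the transient in the first bracket at rate $O(1/T)$ uniformly in $\epl$, up to a polynomial dependence on $\norm{\psi_n}_\infty$ that is bounded in $n$ by the standard Plancherel--Rotach asymptotics. The second integral is a \emph{homogenization error} that forms the heart of the argument: writing $\rho^\epl/\rho = (Z/Z^\epl)\exp(-p(\cdot/\epl)/\sigma^2)$ and using the mean-zero structure $\int_0^\period[(\period/\Pi) e^{-p(y)/\sigma^2} - 1]\dd y = 0$, the integral becomes a Riemann--Lebesgue pairing against an $\epl\period$-periodic oscillation, whose smallness is governed by the spectral content of $\psi_n\rho$ at scale $\epl$. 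Because the $n$-th Hermite function resolves spatial scales of order $1/\sqrt{n}$, the non-resonance condition $\sqrt{N(\epl)}\,\epl\period < \pi/\sqrt{\gamma}$ is exactly what keeps $\psi_n$ from aliasing with the fast potential, which justifies the threshold $N(\epl) = \lfloor \pi^2/(\gamma\period^2\epl^2)\rfloor$. The explicit lower bounds on $\gamma$, with the dichotomy $\sigma = 1$ vs. $\sigma \ne 1$, encode the leading constants from sharp Hermite asymptotics (the second case absorbing the Gaussian mismatch between the Hermite weight and the invariant density).

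The variance $\Var(\widehat\alpha_n^{T,\epl})$ I would control by invoking the limit theorem of \cite{BK25} for time averages of multiscale diffusions with horizons diverging in $\epl$, yielding a bound of the form $\Var(\widehat\alpha_n^{T,\epl}) \lesssim \Phi(\psi_n)/T$, where $\Phi(\psi_n)$ grows polynomially in $n$ through bounds on $\psi_n$, its derivative, and the associated cell-problem / Poisson solution for the homogenized generator; the relevant growth exponent depends on the balance between the Lipschitz constant $L_V$ and the confinement rate $\beta$ from \cref{as:potentials}, producing the case split $r \ge l$ versus $r < l$ in the admissible $\zeta$. Combining with the bias and truncation estimates and the scalings $N \sim \epl^{-2}$, $T \sim \epl^{-\zeta}$, all three pieces tend to zero. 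The \emph{main obstacle} I anticipate is the homogenization bias at the critical modes $n \approx N(\epl)$, where $\psi_n$ and $p(\cdot/\epl)$ oscillate on comparable spatial scales: making the Riemann--Lebesgue argument quantitatively uniform in $n$ up to this threshold requires Plancherel--Rotach asymptotics in the transition region and careful bookkeeping of explicit constants, which is precisely what pins down the sharp lower bounds on $\gamma$ stated in the theorem.
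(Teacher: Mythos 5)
Your top-level decomposition is sound and is, up to a rearrangement, the same as the paper's (the paper splits $\|\widehat\rho_N^{T,\epl}-\rho\|^2 \le 2\|\rho_N^\epl-\rho\|^2 + 2\|\widehat\rho_N^{T,\epl}-\rho_N^\epl\|^2$ and then further splits the first piece into coefficient mismatch and truncation; your bias--variance split of the second bracket is an equivalent bookkeeping). But there is a genuine gap in the stochastic part, and a secondary one in the deterministic part.

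The central gap is your treatment of the transient/variance term. You claim that ``geometric ergodicity of the multiscale process, available through \cite{PaS07}, controls the transient in the first bracket at rate $O(1/T)$ uniformly in $\epl$.'' That uniformity is not established by \cite{PaS07} (which gives ergodicity for each fixed $\epl$), and it is precisely where the paper must work hard: it develops an $\epl$-dependent mean ergodic theorem (Proposition~\ref{pro:eps_mean_ergodic_theorem}) via a scale-function transformation, explicit transition-density bounds on the short timescale $h=\epl^2$ (Lemma~\ref{lem:transition_probability_density_eps_bound}, Corollary~\ref{cor:tpd_eps_estimate}), and a Cauchy--Schwarz step that introduces a factor $\bigl(\int_\R \phi_\epl(y-x_0)^2\dd y\bigr)^{1/2}\sim \epl^{-1/2}$. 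The resulting rate for the stochastic error is $\BigO{N\log(T)^{3/4}/\sqrt{\epl T}}$ (or $\BigO{N/\sqrt{\epl T^{r/l}}}$ when $r<l$), and this extra $\epl^{-1/2}$ is exactly what pins the threshold at $\zeta>5$ (resp.~$\zeta>5l/r$) with $N\sim\epl^{-2}$. Your sketch, with a rate $O(1/T)$ uniform in $\epl$, would yield the weaker condition $\zeta>4$ and would therefore not produce the theorem as stated; more importantly, there is no justification offered for the uniformity. Relatedly, your claim that the per-mode factor $\Phi(\psi_n)$ grows polynomially in $n$ is not what the paper uses nor what one wants: the paper exploits Cram\'er's inequality $|\psi_n(x)|\le\pi^{-1/4}$, so the per-mode bound is \emph{uniform} in $n$ and the stochastic error is simply $N$ times one fixed rate. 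A genuinely $n$-growing $\Phi(\psi_n)$ would only hurt.

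A secondary gap is the deterministic term for general $V$. Your Riemann--Lebesgue/aliasing intuition for why $N\lesssim \epl^{-2}$ is the right threshold is qualitatively aligned with what happens, but the quantitative statement is not a soft Riemann--Lebesgue argument: the paper derives closed-form Fourier integrals for Hermite functions times Gaussians (Lemma~\ref{lem:bound_Fourier_psi}), establishes explicit Hermite polynomial bounds (Lemma~\ref{lem:hermite-bound}), and crucially extends from the quadratic-potential case to general confining $V$ via a Gaussian mixture approximation of $\rho$ \emph{and} a matching multiscale mixture of $\rho^\epl$ (Corollary~\ref{cor:Gaussian-mixture}, Proposition~\ref{pro:General-Case}). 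That extension, which is not a routine density argument because one must pass the mixture through the multiscale structure, is entirely absent from your sketch; without it your coefficient-mismatch bound is only justified for $V(x)=(x-\mu)^2/2$. The Plancherel--Rotach asymptotics you invoke also do not appear in the paper, which relies instead on the elementary bounds above and on \cite{EiM90} for the tail.
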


\begin{remark} \label{rem:inference_wavelength}
\cref{thm:convergence} shows that the optimal number of Fourier coefficients $N$ depends on the wavelength $\period\epl$, which is typically unknown. A heuristic approach to estimate this wavelength involves analyzing the Fourier transform of the estimator $\widehat \rho_N^{T,\epl}$ in the regime where $N \gg 1$ and the condition in \cref{thm:convergence} is not satisfied. In this case, fast-scale oscillations emerge in the estimator, and the dominant wavelength $\period\epl$ can be inferred by identifying the most significant non-zero frequency $\bar\xi \ne 0$ in the frequency domain \cite{Bra78}. In particular, the Fourier transform of $\widehat \rho_N^{T,\epl}$ reads
\begin{equation}
\mathcal F(\widehat \rho_N^{T,\epl})(\xi) = \int_\R \widehat \rho_N^{T,\epl}(x) e^{-2\pi i\xi x} \dd x = \sum_{n=0}^{N-1} \widehat \alpha_n^{T,\epl} \int_\R \psi_n(x) e^{-2\pi i\xi x} \dd x,
\end{equation}
and, using the fact that the Hermite functions $\psi_n$ are eigenfunctions of the Fourier transform, we obtain
\begin{equation}
\mathcal F(\widehat \rho_N^{T,\epl})(\xi) = \sqrt{2\pi} \sum_{n=0}^{N-1} (-i)^n \widehat \alpha_n^{T,\epl} \psi_n(2\pi\xi).
\end{equation}
Then, by plotting the magnitude of the Fourier transform $\abs{\mathcal F(\widehat \rho_N^{T,\epl})(\xi)}$, we can identify the dominant frequency $\bar\xi \ne 0$. From this, the wavelength can be estimated as $\period\epl \simeq 1/\bar\xi$. Finally, we note that knowing the wavelength $\period\epl$ also enables selecting a suitable observation time $T$, as prescribed in \cref{thm:convergence}.
\end{remark}

\section{Numerical illustration} \label{sec:numerics}

In this section, we present a series of numerical experiments to assess the performance of the proposed spectral estimator $\widehat \rho_N^{T,\epl}$ in approximating the invariant density $\rho$ of the homogenized dynamics \eqref{eq:SDE_homogenized}. We begin by illustrating the influence of the trajectory length $T$ and the truncation level $N$ on the accuracy of the estimation when the scale separation parameter $\epl$ is small but finite. Then, following \cref{rem:inference_wavelength}, we show how the multiscale parameter $\epl$ can be inferred from our spectral estimator by considering a large number of Fourier modes. Finally, we demonstrate that the methodology extends naturally to multidimensional diffusion processes.

Throughout all experiments, synthetic trajectories are generated by numerically solving the multiscale SDE \eqref{eq:SDE_multiscale} with deterministic initial condition $X_0^\epl = 0$. The SDEs are discretized using the Euler--Maruyama scheme (see, e.g., \cite{KlP92}) with a fine time step $h = \epl^3$. The ground-truth invariant densities $\rho$ and $\rho^\epl$ are computed analytically from equation \eqref{eq:invariant_density}, where the normalization constants $Z$ and $Z^\epl$ are evaluated via numerical integration using the Python function \texttt{scipy.integrate.quad} from the SciPy library \cite{VGO20}. Finally, the physicist’s Hermite polynomials are computed using the Python function \texttt{scipy.special.hermite} from the same library.

\subsection{Density estimator}

\begin{figure}[th!]
\centering
\begin{tabular}{ccccc}
\includegraphics{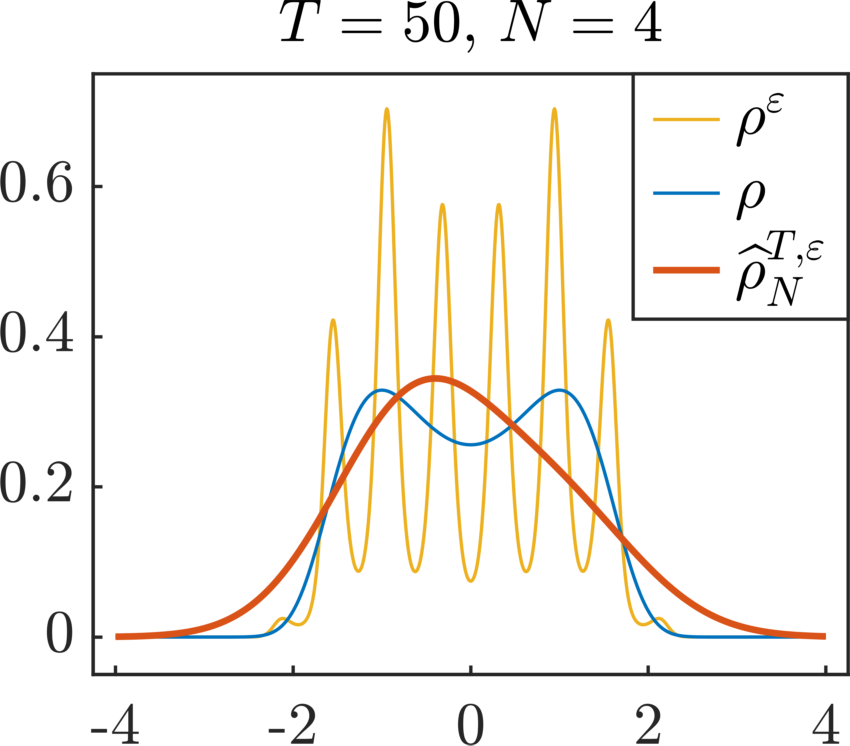} &&
\includegraphics{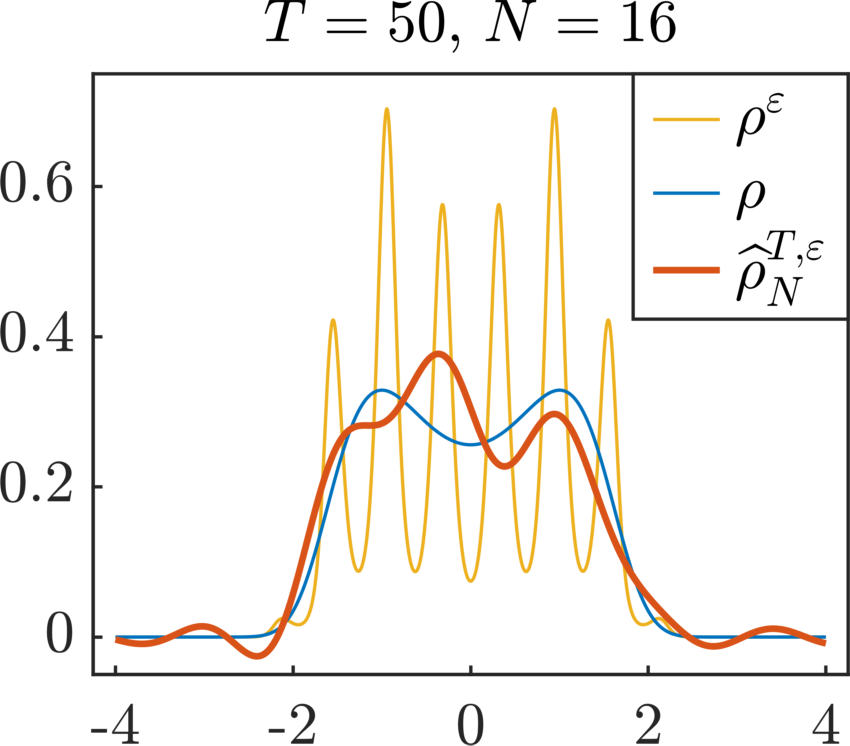} &&
\includegraphics{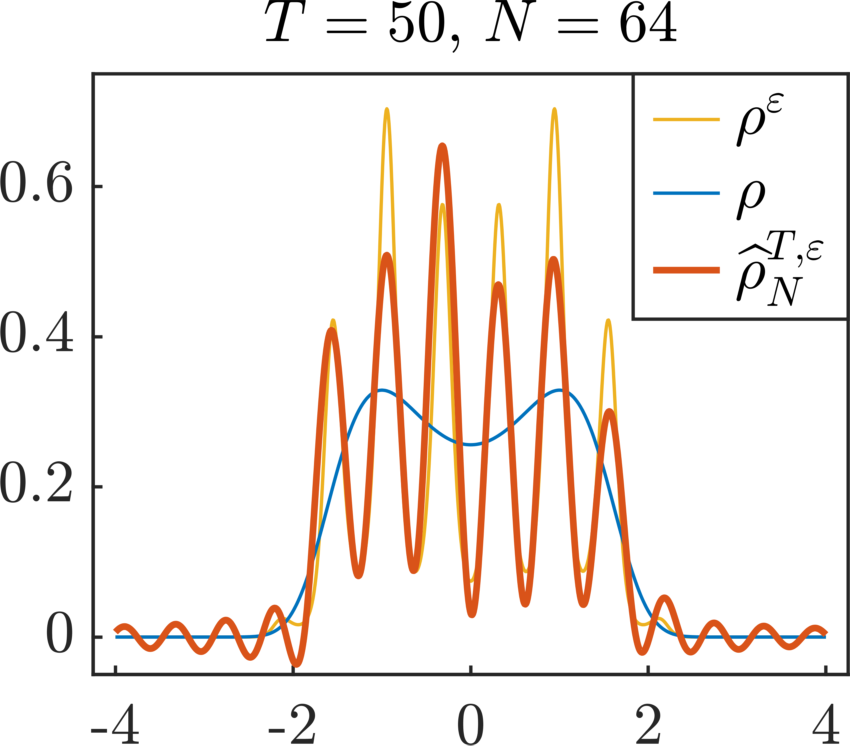} \\[0.25cm]
\includegraphics{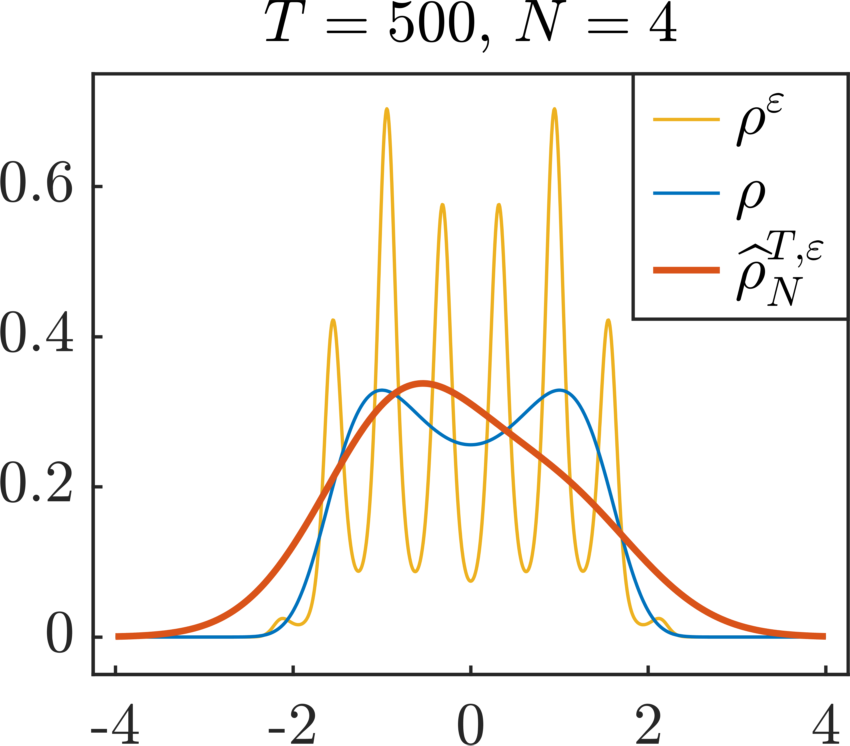} &&
\includegraphics{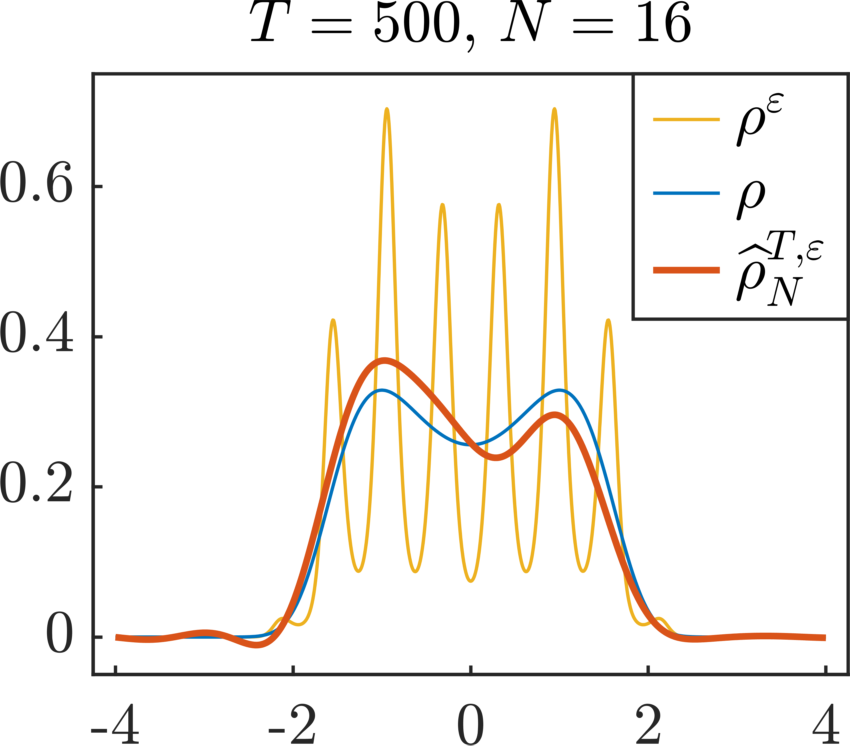} &&
\includegraphics{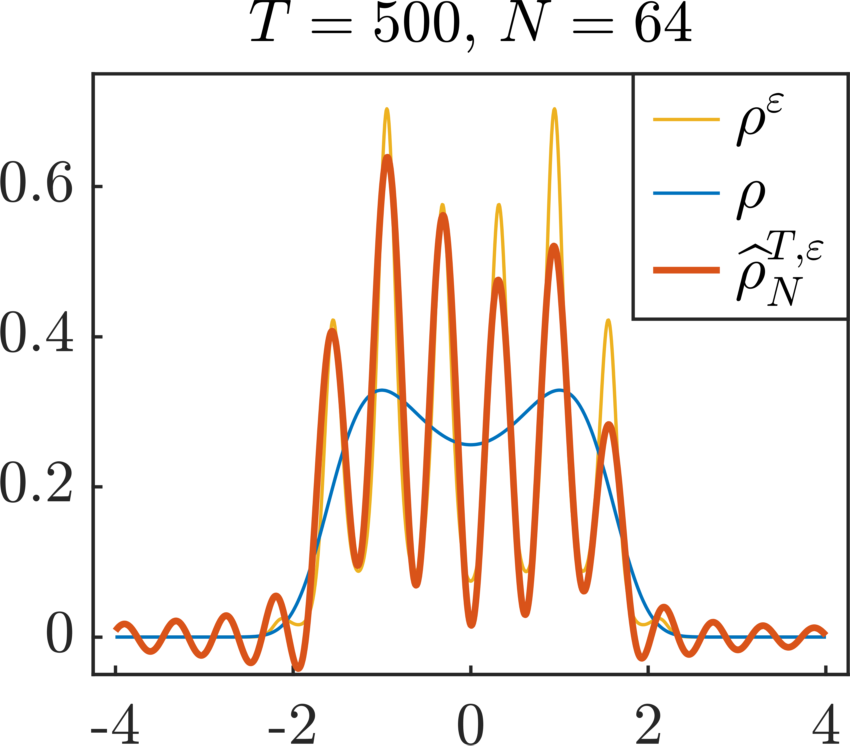} \\[0.25cm]
\includegraphics{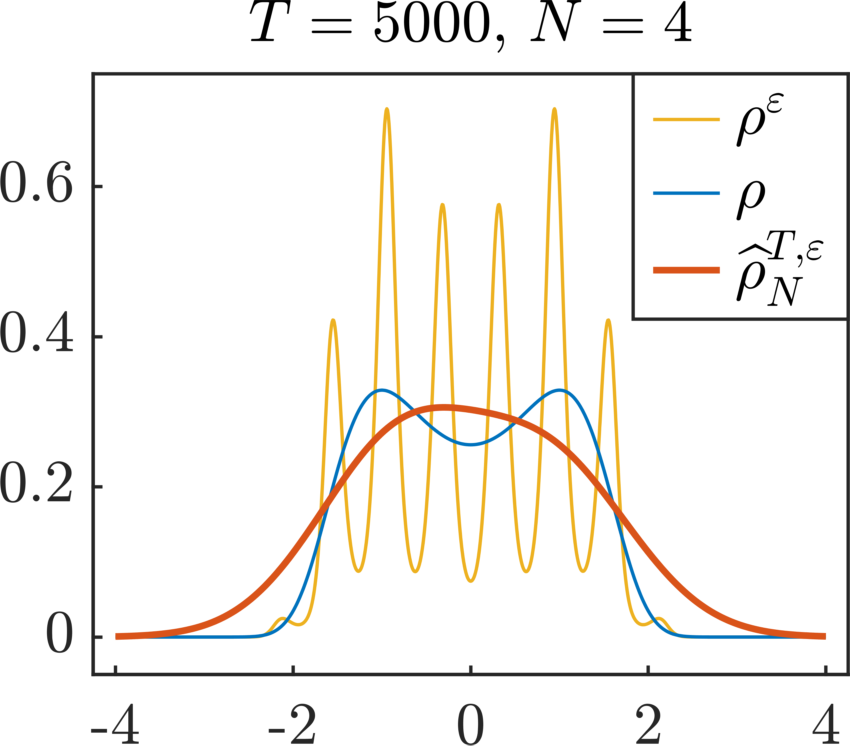} &&
\includegraphics{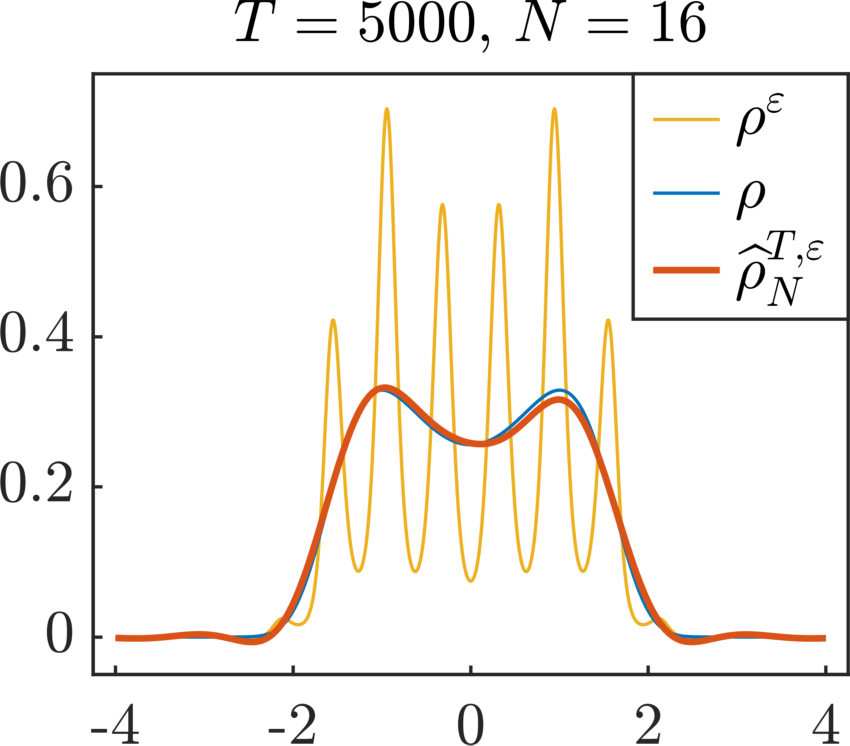} &&
\includegraphics{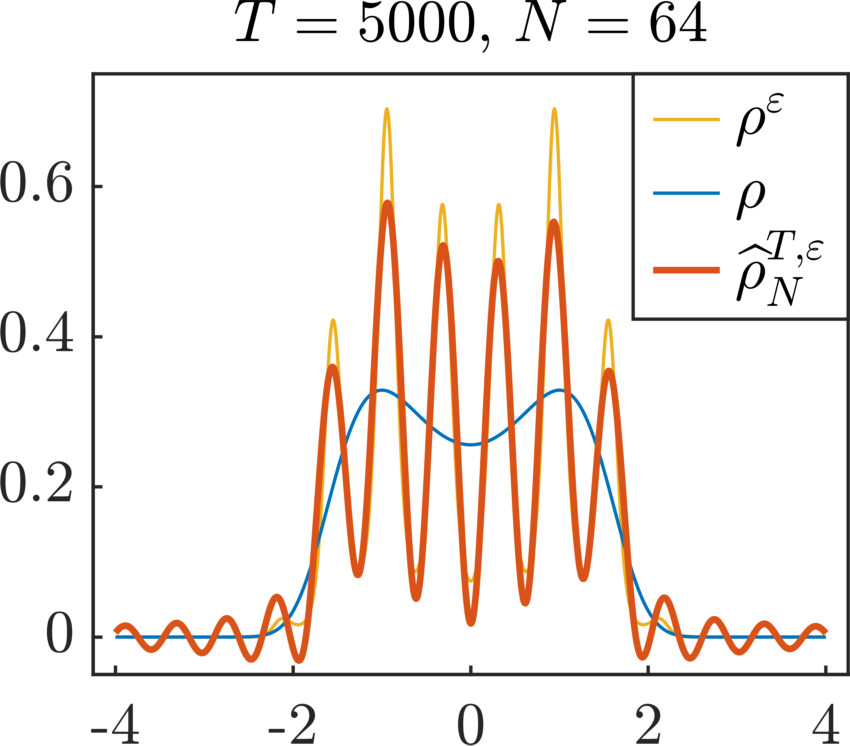}
\end{tabular}
\caption{Performance of the estimator $\widehat \rho_N^{T,\epl}$ across varying values of $T = 50, 500, 5000$ and $N = 4, 16, 64$ with fixed $\epl = 0.1$, for the double-well potential.}
\label{fig:varyTeN}
\end{figure}

We consider the multiscale dynamics \eqref{eq:SDE_multiscale} with a double-well slow-scale potential and a periodic fast-scale potential with period $\period = 2\pi$, given by
\begin{equation}
V(x) = \frac{x^4}4 - \frac{x^2}2, \qquad p(y) = \cos(y).
\end{equation}
We set the diffusion coefficient to $\sigma = 1$ and the scale separation parameter to $\epl = 0.1$. The density estimator $\widehat \rho_N^{T,\epl}$ is then computed for varying trajectory lengths $T \in \{50, 500, 5000\}$ and numbers of Fourier modes $N \in \{4, 16, 64\}$.

In \cref{fig:varyTeN}, we compare the resulting estimators with both the target homogenized density $\rho$ and the invariant measure of the multiscale dynamics $\rho^\epl$. We observe that when the number of Fourier coefficients $N$ is too small, the estimator fails to accurately approximate the invariant density. This is due to the truncation error in the spectral expansion, as characterized by the remainder term in \cref{lem:rate_alpha_n}. On the other hand, when $N$ is chosen appropriately, the estimator $\widehat \rho_N^{T,\epl}$ accurately captures the homogenized density $\rho$, provided the trajectory is sufficiently long. However, an excessive number of Fourier modes causes the estimator to break down, consistent with the theoretical limitations established in \cref{thm:convergence}. In particular, $\widehat \rho_N^{T,\epl}$ begins to capture the fine-scale oscillations of the multiscale dynamics instead of the coarse-grained behavior of the homogenized system, and this leads the estimator to approximate $\rho^\epl$ rather than $\rho$. Conversely, the estimation quality improves as the observation time $T$ increases.

\subsection{Inference of the scale separation parameter}

\begin{figure}[th!]
\centering
\begin{tabular}{ccccc}
\includegraphics{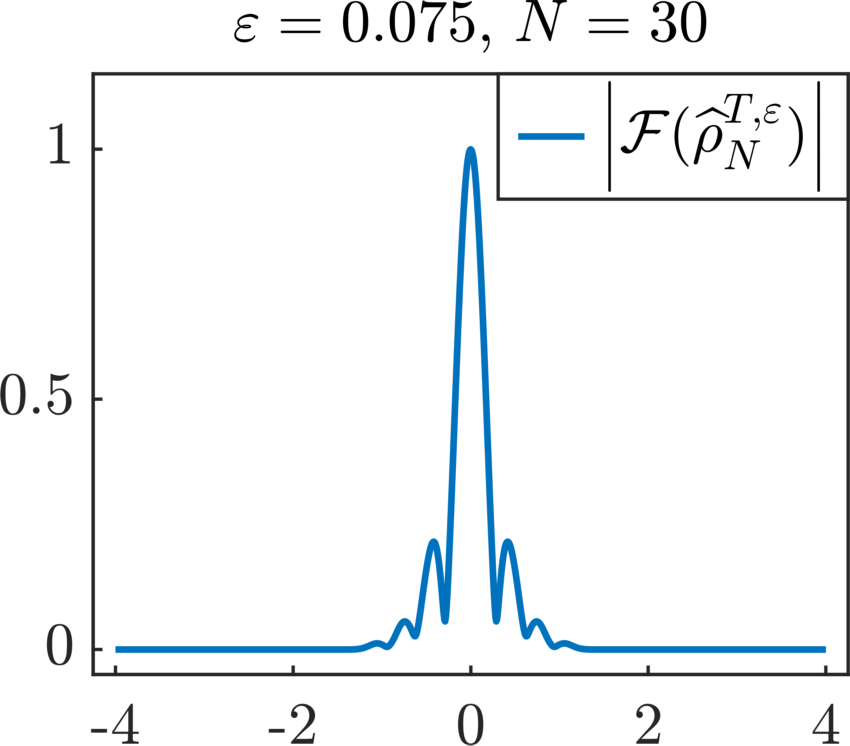} &&
\includegraphics{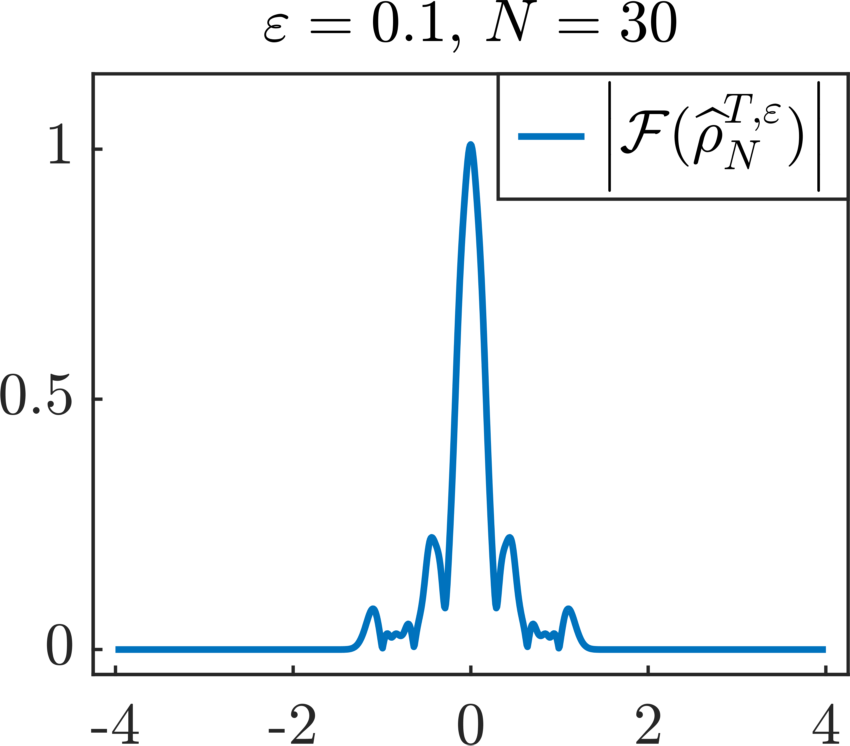} &&
\includegraphics{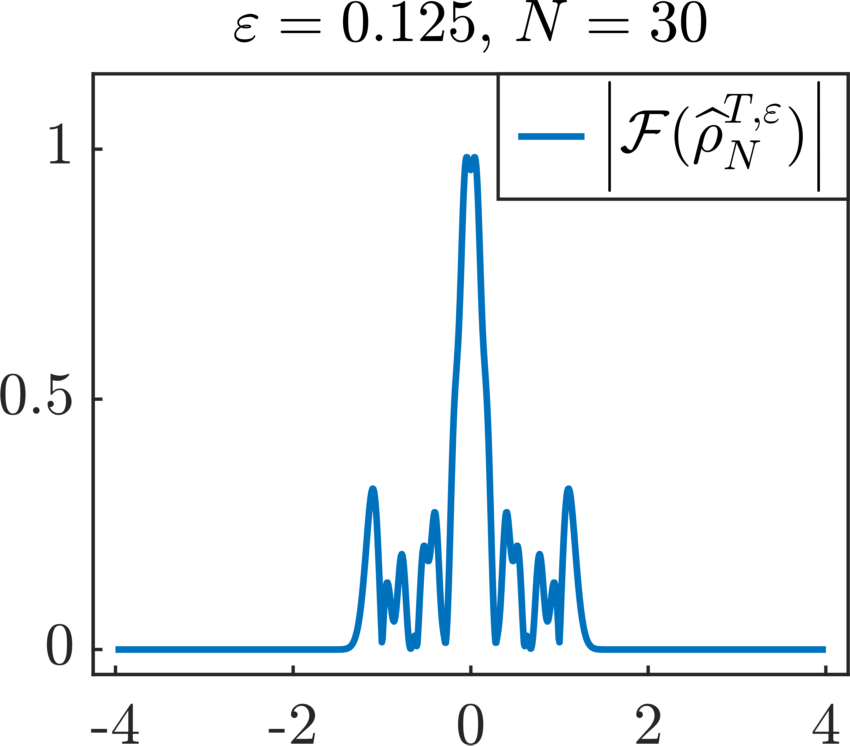} \\[0.25cm]
\includegraphics{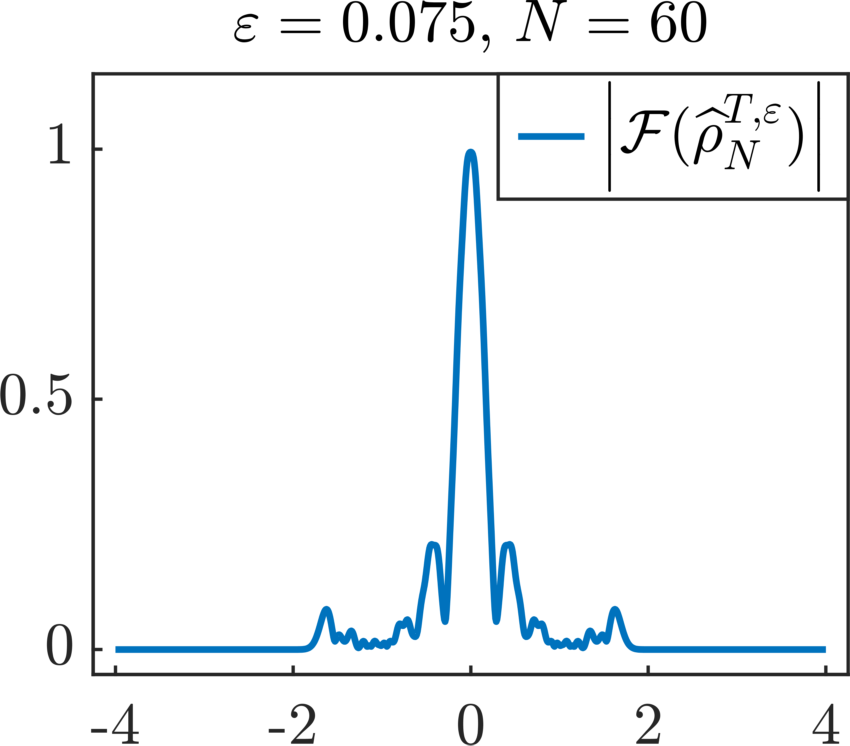} &&
\includegraphics{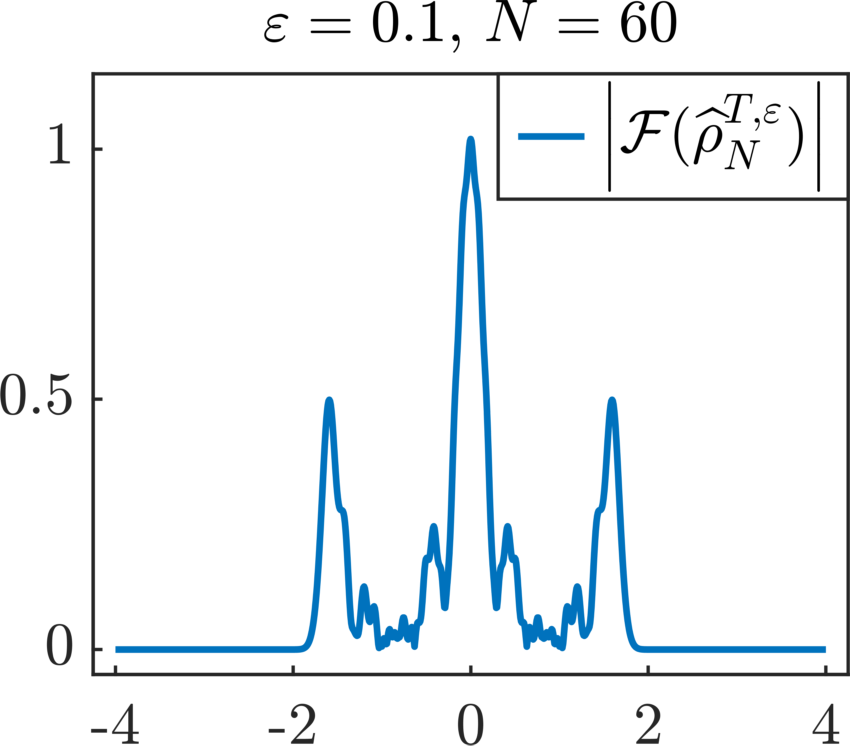} &&
\includegraphics{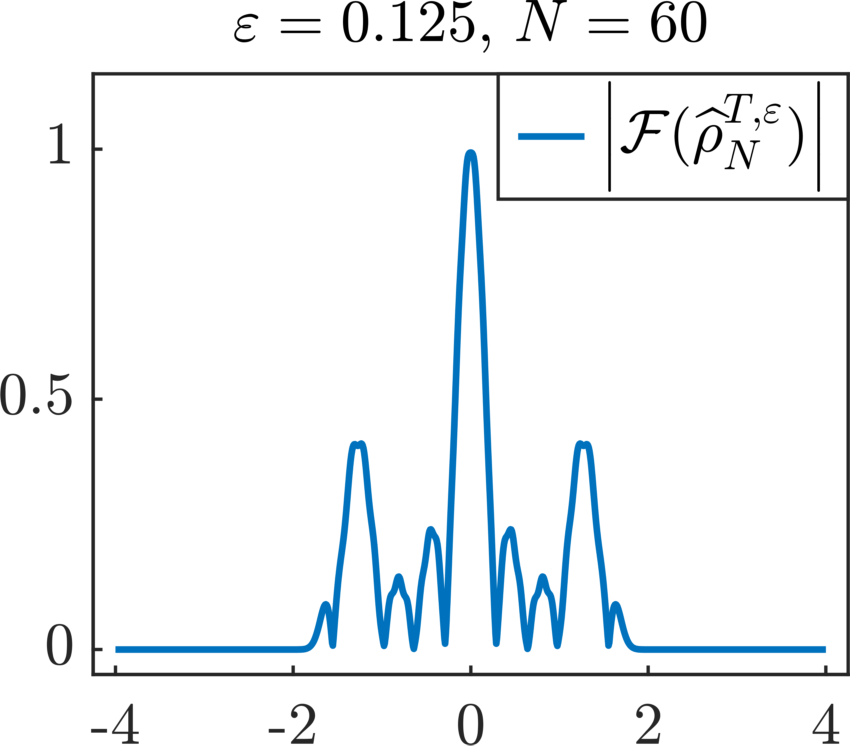} \\[0.25cm]
\includegraphics{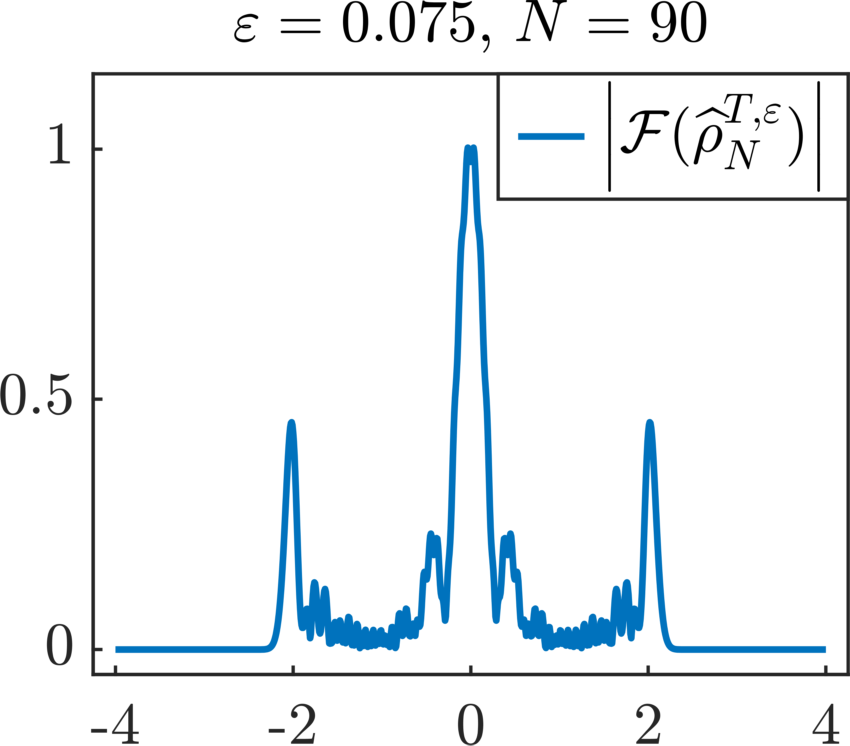} &&
\includegraphics{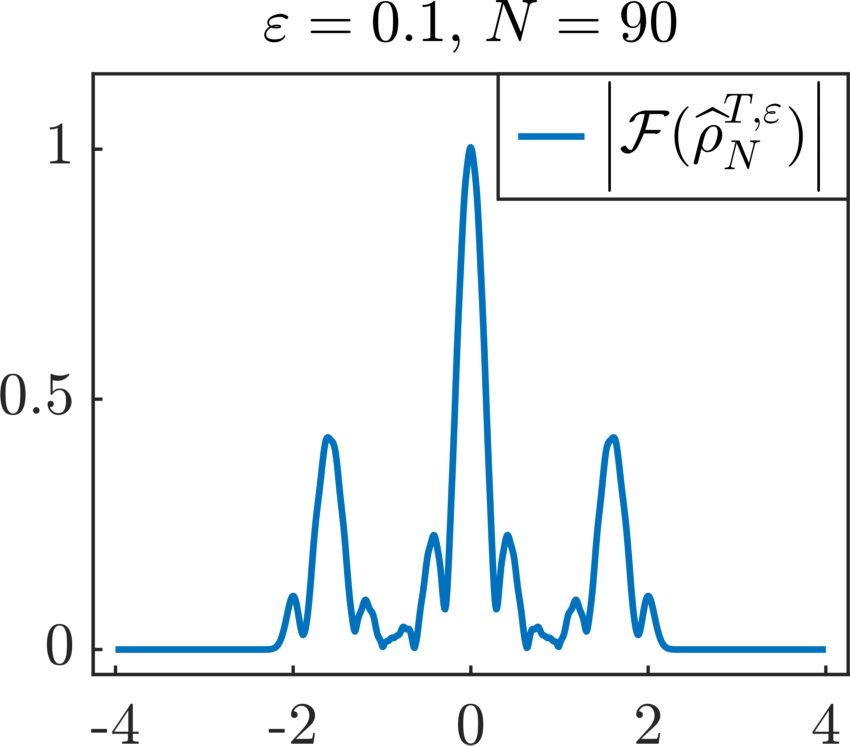} &&
\includegraphics{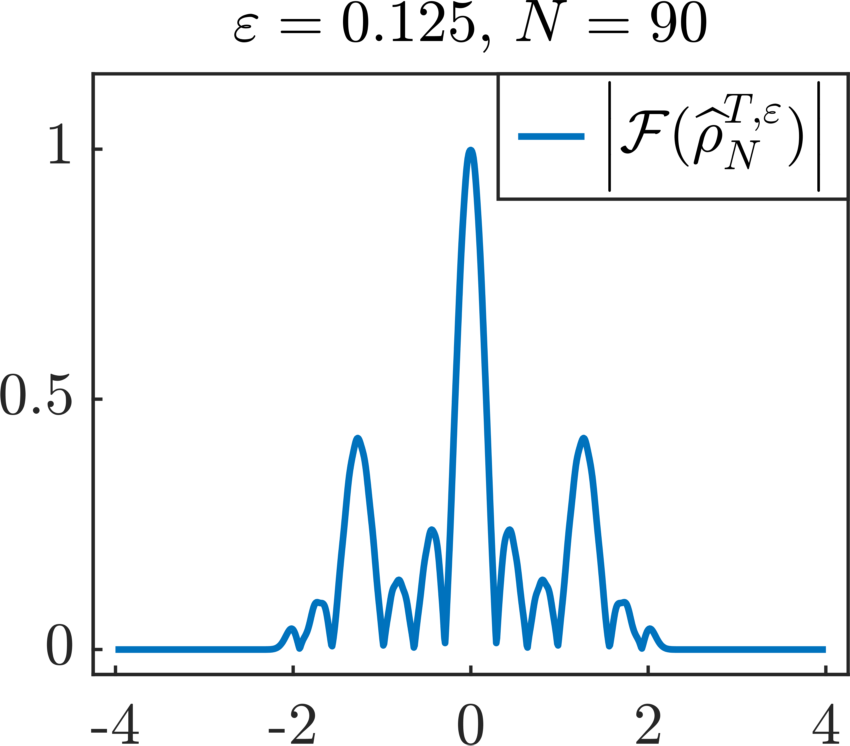}
\end{tabular}

\vspace{0.5cm}
\begin{tabular}{ccccc}
\toprule
$\epl$ && $0.075$ & $0.1$ & $0.125$ \\
\midrule
$\widehat \epl = 1/(\period \bar\xi)$ && $\sim 0.079$ & $\sim 0.099$ & $\sim 0.125$ \\
\bottomrule
\end{tabular}
\caption{Top: magnitude of the Fourier transform of the density estimator $\mathcal F(\widehat \rho_N^{T,\epl})$ across varying values of $\epl = 0.075, 0.1, 0.125$ and $N = 30, 60, 90$ with fixed $T = 1000$, for the double-well potential. Bottom: inference of the scale separation parameter $\epl$ from the dominant frequency $\bar\xi \neq 0$.}
\label{fig:varyTeN_Fourier}
\end{figure}

Let us consider the same setting as in the previous section. As noted above, appropriately selecting the number of Fourier modes is crucial for accurately approximating the homogenized invariant measure $\rho$. While \cref{thm:convergence} provides a quantitative guide for choosing $N$, its formula depends on the wavelength $\period \epl$. Therefore, to apply this result in practice, we must first estimate this quantity.

By analyzing the Fourier transform of our estimator, we aim to identify the dominant nonzero frequency component, which in turn allows us to determine the wavelength $\period\epl$. Following the procedure outlined in \cref{rem:inference_wavelength}, we calculate the magnitude of the Fourier transform of the estimator, $\abs{\mathcal F(\widehat \rho_N^{T,\epl})}$, for various values of $\epl \in \{ 0.075, 0.1, 0.125 \}$ and $N \in \{ 30, 60, 90 \}$. We observe that the dominant frequency $\bar\xi \neq 0$ emerges only when $N$ is sufficiently large. In particular, smaller values of $\epl$ require larger values of $N$ for this frequency to become visible.

From the final set of plots with $N = 90$ Fourier coefficients, we extract the dominant nonzero frequency $\bar\xi$ and, assuming the fast-scale period $\period = 2\pi$ is known, estimate the scale separation parameter via $\widehat\epl = 1/(\period \bar\xi)$. Comparing $\widehat\epl$ with the true value of $\epl$, we find strong agreement, demonstrating the effectiveness of this approach for inferring the wavelength $\period \epl$. This, in turn, provides a practical method for guiding the selection of $N$ in our spectral estimator.

\subsection{A two-dimensional example} \label{sec:num2D}

\begin{figure}[th!]
\centering
\begin{tabular}{ccc}
\includegraphics{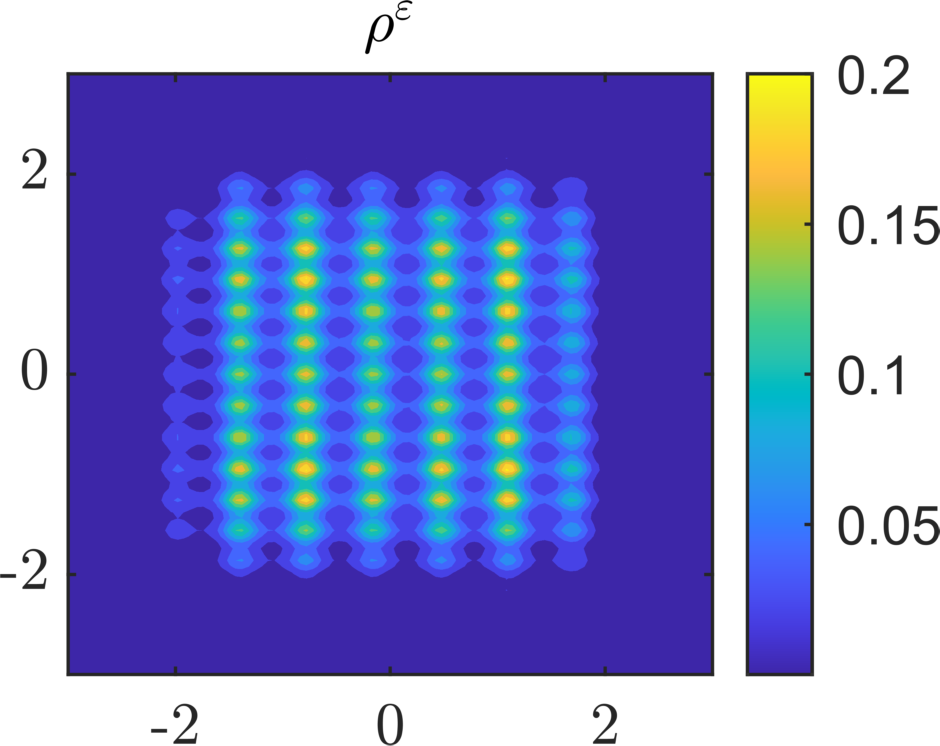} &
\includegraphics{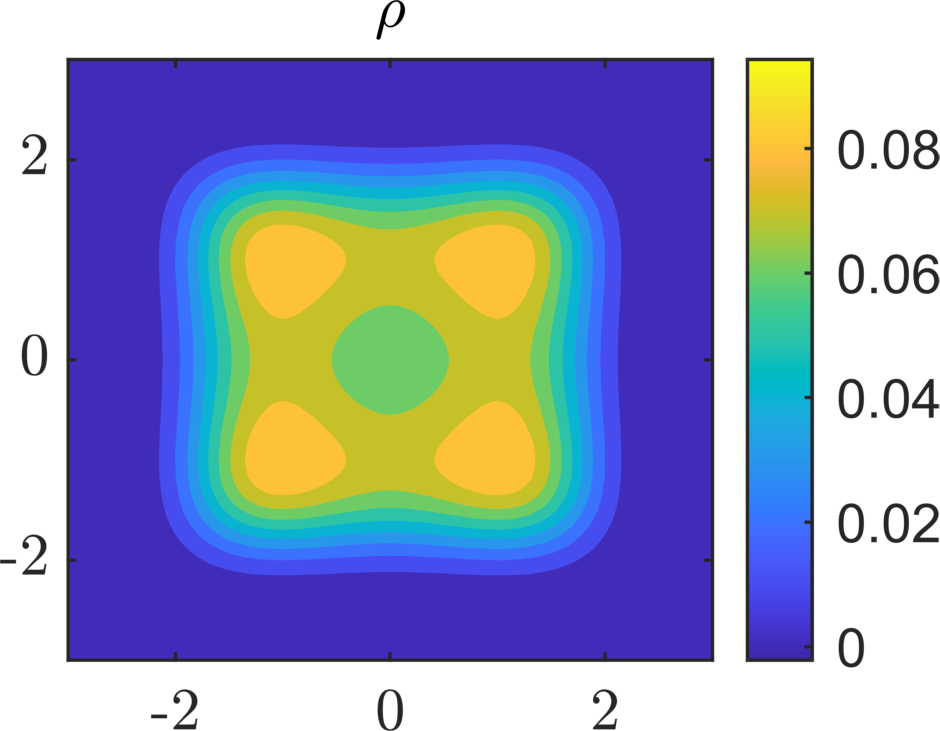} &
\includegraphics{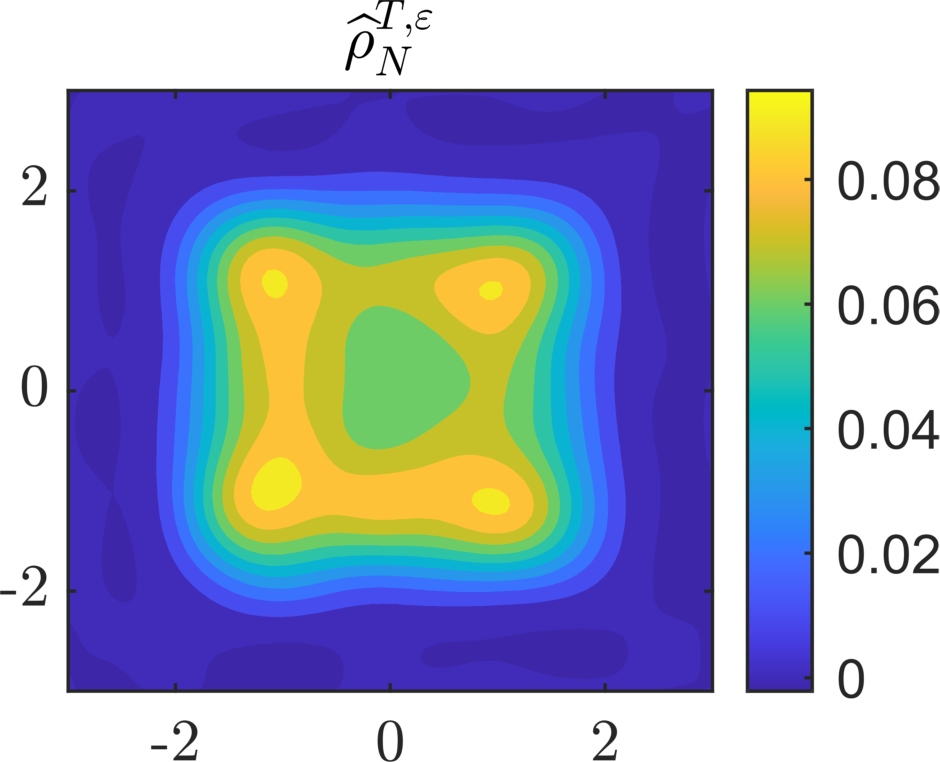} 
\end{tabular}
\caption{Performance of the estimator $\widehat \rho_N^{T,\epl}$ for the two-dimensional test case.}
\label{fig:example2D}
\end{figure}

In this section, we examine a two-dimensional test case to demonstrate that the methodology proposed in this work extends beyond the one-dimensional setting, as discussed in \cref{rem:multidimensional}. Here, the basis functions are constructed as tensor products of one-dimensional Hermite functions. Specifically, our estimator $\widehat \rho_N^{\epl,T}$ for $\mathbf x = (x_1, x_2) \in \R^2$ is given by
\begin{equation}
\widehat \rho_N^{T,\epl}(\mathbf{x}) = \sum_{m=0}^{N-1} \sum_{n=0}^{N-1} \widehat \alpha_{mn}^{T,\epl} \Psi_{mn}(\mathbf{x}),
\end{equation}
where the basis functions $\Psi_{mn}\colon\R^2\to \R$ are defined as
\begin{equation}
\Psi_{mn}(\mathbf{x}) = \psi_m(x_1) \psi_n(x_2),
\end{equation}
and the coefficients $\alpha_{mn}^{T,\epl}$ are computed as before
\begin{equation}
\widehat \alpha_{mn}^{T,\epl} = \frac{1}{T} \int_0^T \Psi_{mn}(\mathbf{X}_t^\epl) \dd t.
\end{equation}
We consider the following potential functions
\begin{equation}
V(\mathbf{x}) = \frac{x_1^4 + x_2^4}{4} - \frac{x_1^2 + x_2^2}{2}, \qquad p(\mathbf{y}) = \sin(y_1) + \sin^2(y_2),
\end{equation}
and set the diffusion coefficient and the multiscale parameter to $\sigma = 1.5$ and $\epl = 0.1$, respectively. A trajectory of the multiscale SDE is simulated over a time interval of length $T = 2000$, and the estimator $\widehat \rho_N^{\epl,T}$ is computed using $N = 16$ Fourier modes in each dimension. In \cref{fig:example2D}, we compare the estimated density with the invariant densities of both the homogenized and multiscale dynamics, and observe that $\widehat \rho_N^{\epl,T}$ provides a reasonably accurate approximation of $\rho$ by capturing its main features.

\section{Convergence analysis} \label{sec:analysis}

This section is devoted to the proof of \cref{thm:convergence}. We recall that our goal is to show that
\begin{equation}
\E \left[ \norm{\widehat \rho_N^{T,\epl} - \rho}_{L^2(\R)}^2 \right] \to 0,
\end{equation}
under suitable conditions on $N$, $T$, and $\epl$. Defining
\begin{equation} \label{eq:rho_N_e_def}
\rho_N^\epl(x) \defeq \sum_{n=0}^{N-1} \alpha_n^\epl \psi_n(x), \qquad \text{with} \qquad \alpha_n^\epl \defeq \int_\R \psi_n(x) \rho^\epl(x) \dd x,
\end{equation}
and using the triangle inequality, we first find that
\begin{equation} \label{eq:triangle}
\E \left[ \norm{\widehat \rho_N^{T,\epl} - \rho}_{L^2(\R)}^2 \right] \le 2 \left( \mathfrak q_N^\epl + \mathcal Q_N^{T,\epl} \right),
\end{equation}
where 
\begin{equation} \label{eq:deterministic_stochastic_terms}
\mathfrak q_N^\epl = \norm{\rho_N^\epl - \rho}_{L^2(\R)}^2 \qquad \text{and} \qquad \mathcal Q_N^{T,\epl} = \E \left[ \norm{\widehat \rho_N^{T,\epl} - \rho_N^\epl}_{L^2(\R)}^2 \right].
\end{equation}
The term $\mathfrak q_N^\epl$ represents the deterministic error due to truncating the Fourier expansion and using multiscale coefficients, while $\mathcal Q_N^{T,\epl}$ captures the stochastic error arising from the finite-time, data-driven approximation of the coefficients. In the following sections, we analyze each term separately and then combine the results to eventually establish \cref{thm:convergence}.

\subsection{Analysis of the deterministic component $\mathfrak q_N^\epl$}

The convergence analysis of the first term $\mathfrak q_N^\epl$ is carried out in three main steps. We begin by establishing technical results concerning Hermite functions. Next, we demonstrate convergence in the Gaussian setting, where the potential $V$ is quadratic. Finally, we extend the analysis to more general potentials by leveraging approximation theory based on mixtures of Gaussians.

\subsubsection{Properties of Hermite polynomials and Hermite functions}

In the following technical result, we derive an expression for the Fourier coefficients of the product of Hermite functions and Gaussian densities, formulated in terms of Hermite polynomials.

\begin{lemma} \label{lem:bound_Fourier_psi}
For all $n\in\N\cup\{0\}$, it holds that
\begin{equation}
\int_\R \psi_n(x) e^{-\frac{(x-\mu)^2}{2\sigma^2} \pm i \frac{2\pi}{\period} k \frac{x}\epl} \dd x = \frac{\pi^{1/4}}{\sqrt{c\cdot 2^{n-1} n!}}e^{-\frac{\mu^2}{2\sigma^2} + \frac{1}{2c}\left(\frac{\mu}{\sigma^2}\mp\frac{2\pi ki}{\period\epl}\right)^2} \widetilde{H}_n\left(\mp \frac{2\pi k}{\period\epl}; \sigma^2\right),
\end{equation}
where $c=\frac{\sigma^2+1}{\sigma^2}$, and
\begin{equation}
\widetilde{H}_n(x; \sigma^2) 
= \begin{cases}
(-1)^n \left(1 - \frac2c\right)^{n/2}H_n\left(\frac{ix-\mu/\sigma^2}{c\sqrt{1-2/c}}\right) &\text{if } \sigma^2 < 1, \\
(\mu-ix)^n &\text{if } \sigma^2 = 1, \\
(-i)^n\left(\frac{2}{c}-1\right)^{n/2}H_n\left(\frac{x+i\mu/\sigma^2}{c\sqrt{2/c-1}}\right) &\text{if } \sigma^2 > 1.
\end{cases}
\end{equation}
\end{lemma}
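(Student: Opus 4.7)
The strategy is to reduce the integral to a classical Gaussian-weighted Hermite integral with a complex translation, then evaluate it using the generating function of the Hermite polynomials, and finally organise the three regimes $\sigma^2<1$, $\sigma^2=1$, $\sigma^2>1$ according to the sign of $1-a$ that appears naturally in the resulting formula.

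First, I would write out the definition of $\psi_n$ and group the three Gaussian-type exponentials. Setting $\lambda = 2\pi k/(\period\epl)$, the combined exponent is
\begin{equation*}
-\frac{x^2}{2}-\frac{(x-\mu)^2}{2\sigma^2}\pm i\lambda x
= -\frac{c}{2}\bigl(x-z_0\bigr)^2 + \frac{1}{2c}\Bigl(\tfrac{\mu}{\sigma^2}\pm i\lambda\Bigr)^2 - \frac{\mu^2}{2\sigma^2},
\end{equation*}
where $c=(\sigma^2+1)/\sigma^2$ and $z_0 = c^{-1}(\mu/\sigma^2\pm i\lambda)\in\C$. Thus, up to the prefactor $(\sqrt\pi\,2^n n!)^{-1/2}$ and the explicit Gaussian factor $e^{-\mu^2/(2\sigma^2)+(2c)^{-1}(\mu/\sigma^2\pm i\lambda)^2}$, the task reduces to evaluating
\begin{equation*}
I_n(z_0)\defeq \int_\R H_n(x)\,e^{-\frac{c}{2}(x-z_0)^2}\dd x,\qquad a\defeq c/2.
\end{equation*}

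Next, I would compute $I_n(z_0)$ through the generating function $\sum_{n\ge 0} H_n(x)\,t^n/n! = e^{2xt-t^2}$. Multiplying by $t^n/n!$, summing over $n$, interchanging sum and integral (justified by absolute convergence for $|t|$ small), and completing the square in $x$ yield
\begin{equation*}
\sum_{n=0}^\infty \frac{t^n}{n!}\,I_n(z_0) = \sqrt{\pi/a}\;\exp\Bigl(2z_0 t + \tfrac{1-a}{a}\,t^2\Bigr).
\end{equation*}
The translation by the complex point $z_0$ in the completion of the square is legitimate because the integrand is entire and decays super-exponentially, so Cauchy's theorem gives the same value as the real Gaussian integral. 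Recognising the right-hand side as another Hermite generating function (with a rescaling depending on the sign of $1-a$) and matching the Taylor coefficients in $t$, I would extract
\begin{equation*}
I_n(z_0)=\sqrt{\pi/a}\cdot\begin{cases}
\bigl(\tfrac{a-1}{a}\bigr)^{n/2}H_n\!\Bigl(z_0\sqrt{\tfrac{a}{a-1}}\Bigr) & a>1\ (\sigma^2<1),\\[2pt]
(2z_0)^n & a=1\ (\sigma^2=1),\\[2pt]
i^n\bigl(\tfrac{1-a}{a}\bigr)^{n/2}H_n\!\Bigl(-i z_0\sqrt{\tfrac{a}{1-a}}\Bigr) & a<1\ (\sigma^2>1).
\end{cases}
\end{equation*}

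Finally, I would plug back $a=c/2$, $z_0=c^{-1}(\mu/\sigma^2\pm i\lambda)$ and $\lambda=2\pi k/(\period\epl)$. Simplifying $\sqrt{\pi/a}=\sqrt{2\pi/c}$, combining it with the prefactor $(\sqrt\pi\,2^n n!)^{-1/2}$, gathering the common constant $\pi^{1/4}/\sqrt{c\cdot 2^{n-1}n!}$, and using the Hermite parity identity $H_n(-y)=(-1)^n H_n(y)$ to match the arguments $\mp\lambda$ appearing in $\widetilde H_n$, one recovers precisely the three branches of $\widetilde H_n(\,\cdot\,;\sigma^2)$ stated in the lemma. This completes the proof once the exponential prefactor is rewritten with the variable $x=\mp 2\pi k/(\period\epl)$ that appears in the statement.

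\textbf{Expected obstacles.} None of the steps is genuinely deep, but the bookkeeping is delicate: one must track complex square roots, the factor $i^n$ arising only when $\sigma^2>1$, and the sign flip from parity of $H_n$ which is needed to convert $H_n(z_0\sqrt{a/|a-1|})$ into the canonical form $H_n((\pm i\lambda\mp\mu/\sigma^2)/(c\sqrt{|1-2/c|}))$ of the statement. The only analytic subtlety is the justification of the complex translation in the completion of the square, which is a standard contour-shift argument for entire integrands with Gaussian decay and should be dispensed with in a single line.
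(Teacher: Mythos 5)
Your proof is correct and uses essentially the same machinery as the paper: both rely on the Hermite generating function $e^{2xt-t^2}=\sum_n H_n(x)t^n/n!$ together with a complex completion of the square, and both must recognize the resulting Gaussian-in-$t$ exponential as a rescaled Hermite generating function (the paper extracts the $s^n$ coefficient by explicit $n$-fold differentiation at $s=0$, whereas you match Taylor coefficients directly after isolating the classical integral $I_n(z_0)=\int_\R H_n(x)e^{-a(x-z_0)^2}\dd x$). Your organization is slightly more modular, but the underlying computation is the same; your formulas in all three regimes $\sigma^2\lessgtr 1$ reduce, after substituting $a=c/2$, $z_0=c^{-1}(\mu/\sigma^2\mp i\lambda)$ and invoking the parity $H_n(-y)=(-1)^nH_n(y)$, exactly to the paper's $\widetilde H_n$.
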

\begin{proof}
First, using the generating function for $H_n$ \cite{Sto27}, we have
\begin{equation}
e^{-x^2/2 - (x-\mu)^2/(2\sigma^2)+2xs-s^2} = \sum_{n=0}^{\infty} e^{-x^2/2 - (x-\mu)^2/(2\sigma^2)}H_n(x)\frac{s^n}{n!}.
\end{equation}
The Fourier transform of the right hand side is
\begin{equation}
\frac{1}{\sqrt{2\pi}}\sum_{n=0}^\infty \frac{s^n}{n!} \int_\R e^{-x^2/2} H_n(x) e^{-\frac{(x - \mu)^2}{2\sigma^2} -ixk} \dd x = \frac{\sqrt{\sqrt{\pi}2^n n!}}{\sqrt{2\pi}}\sum_{n=0}^\infty \frac{s^n}{n!}\int_\R \psi_n(x) e^{-\frac{(x - \mu)^2}{2\sigma^2} - ixk} \dd x.
\end{equation}
Then taking the Fourier transform of the left hand side, we have
\begin{equation}
\frac{1}{\sqrt{2\pi}}\int_\R e^{-ixk}\cdot e^{-\frac{x^2}{2}\left(1+\frac{1}{\sigma^2}\right) + \frac{x\mu}{\sigma^2} - \frac{\mu^2}{2\sigma^2}+2xs-s^2} \dd x = \frac{1}{\sqrt{c}}e^{-\frac{\mu^2}{2\sigma^2}-s^2+\frac{c}{2}\left(\frac{\mu/\sigma^2-ik+2s}{c}\right)^2},
\end{equation}
where $c = \frac{\sigma^2+1}{\sigma^2}$. When $c < 2$, the $n$-th derivative of this with respect to $s$ is
\begin{equation}
\begin{split}
&\frac{\partial^n}{\partial s^n}\frac{1}{\sqrt{c}}e^{-\frac{\mu^2}{2\sigma^2}-s^2+\frac{c}{2}\left(\frac{\mu/\sigma^2-ik+2s}{c}\right)^2}\\ 
&\qquad = \frac{1}{\sqrt{c}}e^{-\frac{\mu^2}{2\sigma^2} + \frac{1}{2c}\left(\frac{\mu}{\sigma^2}-ik\right)^2}(-i)^n\left(\frac{2}{c}-1\right)^{n/2}e^{\left(\frac{2}{c}-1\right)s^2 + \frac{2}{c}\left(\frac{\mu}{\sigma^2}-ik\right)s} H_n\left(\frac{k - i(c-2)s+i\mu/\sigma^2}{c\sqrt{2/c-1}}\right).
\end{split}
\end{equation}
Evaluating at $s=0$ gives
\begin{equation}
\left.\frac{\partial^n}{\partial s^n}\frac{1}{\sqrt{c}}e^{-\frac{\mu^2}{2\sigma^2}-s^2+\frac{c}{2}\left(\frac{\mu/\sigma^2-ik+2s}{c}\right)^2}\right|_{s=0} = \frac{1}{\sqrt{c}}e^{-\frac{\mu^2}{2\sigma^2} + \frac{1}{2c}\left(\frac{\mu}{\sigma^2}-ik\right)^2}(-i)^n\left(\frac{2}{c}-1\right)^{n/2}H_n\left(\frac{k+i\mu/\sigma^2}{c\sqrt{2/c-1}}\right).
\end{equation}
It follows that when $c<2$, the Fourier transform of the left hand side has a Taylor expansion
\begin{equation}
\begin{split}
&\frac{1}{\sqrt{2\pi}}\int_\R e^{-ixk}\cdot e^{-\frac{x^2}{2}\left(1+\frac{1}{\sigma^2}\right) + \frac{x\mu}{\sigma^2} - \frac{\mu^2}{2\sigma^2}+2xs-s^2} \dd x\\ 
&\qquad = \frac{1}{\sqrt{c}}e^{-\frac{\mu^2}{2\sigma^2} + \frac{1}{2c}\left(\frac{\mu}{\sigma^2}-ik\right)^2}\sum_{n=0}^\infty (-i)^n\left(\frac{2}{c}-1\right)^{n/2}H_n\left(\frac{k+i\mu/\sigma^2}{c\sqrt{2/c-1}}\right)\frac{s^n}{n!}.
\end{split}
\end{equation}
Thus, for $c<2$, equating coefficients of $s^n$ in the series expansions of the Fourier transforms, we have 
\begin{equation}
\int_\R \psi_n(x) e^{-\frac{(x-\mu)^2}{2\sigma^2}-ixk} \dd x = \frac{\pi^{1/4}\sqrt{2}}{\sqrt{c\cdot 2^n n!}} e^{-\frac{\mu^2}{2\sigma^2} + \frac{1}{2c}\left(\frac{\mu}{\sigma^2}-ik\right)^2} (-i)^n\left(\frac{2}{c}-1\right)^{n/2}H_n\left(\frac{k+i\mu/\sigma^2}{c\sqrt{2/c-1}}\right).
\end{equation}
The proof for $c > 2$ is similar. Now when $c=2$, we have that the Fourier transform of the left hand side simplifies to
\begin{equation}
\frac{1}{\sqrt{2}}e^{-\frac{\mu^2}{2}-s^2+\frac{1}{4}\left(\mu-ik+2s\right)^2} = \frac{1}{\sqrt{2}} e^{-\frac{\mu^2}{2}+\frac{1}{4}(\mu-ik)^2}\sum_{n=0}^\infty \frac{(\mu-ik)^ns^n}{n!}.
\end{equation}
Again equating coefficients of Taylor expansions yields
\begin{equation}
\int_\R \psi_n(x) e^{-\frac{x^2}{2\sigma^2}-ixk} \dd x = \frac{\pi^{1/4}}{\sqrt{2^{n}n!}}e^{-\frac{\mu^2}{2}+\frac{1}{4}(\mu-ik)^2}(\mu-ik)^n
\end{equation}
when $c=2$, that is, $\sigma^2 = 1$.
\end{proof}

\begin{remark}
Note that $\widetilde{H}_n$ is continuous in $\sigma^2$ at $\sigma^2=1$. Indeed, around $\sigma^2=1$, we have
\begin{equation}
\begin{split}
(-i)^n\left(\frac{2}{c}-1\right)^{n/2}H_n\left(\frac{x+i\mu/\sigma^2}{c\sqrt{2/c-1}}\right)
&\sim (-i)^{n}\left(\frac{2}{c}-1\right)^{n/2}\cdot \frac{2^n (x+i\mu/\sigma^2)^n}{(c\sqrt{2/c-1})^n}\\ 
&= \frac{(-i)^n2^n(x+i\mu/\sigma^2)^n}{c^n} \to (\mu-ix)^n
\end{split}
\end{equation}
as $\sigma^2\to1^+$, or equivalently, as $c = \frac{\sigma^2+1}{\sigma^2}\to 2^-$. When $\sigma^2 \to 1^-$, the calculation follows in a similar way.
\end{remark}

The next result provides an upper bound for the Hermite polynomials.

\begin{lemma}\label{lem:hermite-bound}
For all $n\in\N\cup\{0\}$, we have the following bound on the Hermite polynomials
\begin{equation}
\abs{H_n(x)} \le \begin{cases}
n^{n/2}(4\sqrt{2})^n\left(1+\frac{n}{2}\right) &\text{if } \abs{x} \le \sqrt{2n},\\
2^{2n}\abs{x}^n\left(1 + \frac{n}{2}\right) &\text{if } \abs{x} > \sqrt{2n},
\end{cases}
\end{equation}
where we define $0^0 := 1$. In particular, it holds
\begin{equation}
\abs{H_n(x)} \le 4^n\left(1 + \frac{n}{2}\right)\left(2^{n/2}n^{n/2} + \abs{x}^n\right).
\end{equation}
\end{lemma}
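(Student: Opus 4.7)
The plan is to work directly from the explicit power-series representation of the physicist's Hermite polynomials,
\begin{equation*}
H_n(x) = n!\sum_{k=0}^{\lfloor n/2\rfloor}\frac{(-1)^k}{k!(n-2k)!}(2x)^{n-2k},
\end{equation*}
apply the triangle inequality, and bound the resulting positive sum term by term. Using the elementary estimate $n!/(n-2k)! \le n^{2k}$, it suffices to bound
\begin{equation*}
S_n(x) \defeq \sum_{k=0}^{\lfloor n/2\rfloor} \frac{n^{2k}}{k!}(2\abs{x})^{n-2k}.
\end{equation*}

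For the first regime $\abs{x}\le\sqrt{2n}$, I will replace $\abs{x}$ by $\sqrt{2n}$ inside the sum and factor out $n^{n/2}(2\sqrt{2})^n$; using $(2\sqrt{2n})^{n-2k} = (2\sqrt{2})^{n-2k}n^{(n-2k)/2}$, the remainder collapses to a truncated exponential series and is dominated by
\begin{equation*}
\sum_{k=0}^{\infty} \frac{(n/8)^k}{k!} = e^{n/8}.
\end{equation*}
The elementary inequality $e^{1/8}\le 2$ (equivalently $1/8<\log 2$) then yields $e^{n/8}\le 2^n$, producing the bound $n^{n/2}(4\sqrt{2})^n$, which is already stronger than the claimed $(1+n/2)n^{n/2}(4\sqrt{2})^n$. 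For the second regime $\abs{x}>\sqrt{2n}$, I will instead factor out $(2\abs{x})^n$ and exploit that the assumption on $\abs{x}$ implies $n/(2\abs{x})^2 < 1/8$, so that $n^{2k}/(2\abs{x})^{2k}\le (n/8)^k$. The same exponential-series bound then gives $S_n(x) \le (2\abs{x})^n e^{n/8} \le 2^{2n}\abs{x}^n$, again stronger than the stated inequality.

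The combined bound $\abs{H_n(x)} \le 4^n(1+n/2)(2^{n/2}n^{n/2}+\abs{x}^n)$ follows immediately, since in each regime one of the two summands on the right-hand side already dominates the corresponding case bound. No single step looks delicate; the slack built into the factor $(1+n/2)$ in the statement easily absorbs the losses from the naive estimate $n!/(n-2k)!\le n^{2k}$, so the main care is just to split the two regimes correctly and to recognize the exponential-series structure of the residual sum. The degenerate case $n=0$ with the convention $0^0:=1$ is trivial since $H_0\equiv 1$.
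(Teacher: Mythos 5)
Your proof is correct and slightly sharper than needed, since you obtain $n^{n/2}(4\sqrt{2})^n$ and $2^{2n}\abs{x}^n$ directly without the polynomial factor $(1+n/2)$. The route differs from the paper's in the factorial split: the paper writes $\frac{n!}{m!(n-2m)!} = \binom{n}{m}\frac{(n-m)!}{(n-2m)!}$, bounds $\binom{n}{m}\le 2^n$ and $\frac{(n-m)!}{(n-2m)!}\le n^m$, and then controls the residual geometric sum $\sum_{m\le n/2} 8^{-m}$ crudely by the number of summands $1+n/2$; you instead bound $\frac{n!}{(n-2k)!}\le n^{2k}$ outright, keep the $1/k!$, and recognize the residual as a truncated exponential series $\sum_k (n/8)^k/k! \le e^{n/8}\le 2^n$. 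Both give the same leading constant $4^n$, but your version trades the $(1+n/2)$ polynomial factor for the sharper $e^{n/8}\le 2^n$ exponential bound, which is strictly stronger and cleanly absorbed. All the supporting steps check out: $n/(2\abs{x})^2 < 1/8$ when $\abs{x}>\sqrt{2n}$, monotonicity in $\abs{x}$ of each summand in the first regime, and $e^{1/8}<2$.
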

\begin{proof}
Recall from \cite[Chapter V]{Sze75} that 
\begin{equation}
H_n(x) = \sum_{m=0}^{\lfloor n/2\rfloor} \frac{(-1)^m n!}{m!(n-2m)!} (2x)^{n-2m} = \sum_{m=0}^{\lfloor n/2\rfloor} \frac{(-1)^m n!}{m!(n-m)!}\frac{(n-m)!}{(n-2m)!}(2x)^{n-2m}.
\end{equation}
If $\abs{x} \le \sqrt{2n}$, then we have
\begin{equation}
\abs{H_n(x)} \le \sum_{m=0}^{\lfloor n/2\rfloor} \binom{n}{m} n^m(2\sqrt{2n})^{n-2m} = n^{n/2}(2\sqrt{2})^n \sum_{m=0}^{\lfloor n/2\rfloor} \binom{n}{m} (2\sqrt{2})^{-2m} \le n^{n/2}(4\sqrt{2})^n\left(1+\frac{n}{2}\right),
\end{equation}
where the last inequality uses $\binom{n}{m} \le 2^n$. Note that we use $0^0 = 1$ when we bound $\frac{(n-m)!}{(n-2m)!} \le n^m$ for $n=0$. Now similarly, if $\abs{x} > \sqrt{2n}$, we have
\begin{equation}
\abs{H_n(x)} \le 2^n\abs{x}^n \sum_{m=0}^{\lfloor n/2\rfloor} \frac{n!}{m!(n-2m)!}\frac{1}{(2x)^{2m}} \le 2^n\abs{x}^n \sum_{m=0}^{\lfloor n/2\rfloor} \binom{n}{m} \frac{n^m}{(8n)^m} \le 2^{2n}\abs{x}^n\left(1 + \frac{n}{2}\right),
\end{equation}
as desired. The final bound in the statement of the lemma is obtained by adding the bounds for $\abs{x}\le \sqrt{2n}$ and $\abs{x} > \sqrt{2n}$.
\end{proof}

Combining the previous lemmas, we obtain a bound for the sum of the Fourier coefficients of the product of Hermite functions and Gaussian densities.

\begin{lemma}\label{lem:psi-fourier-series-bound}
Let $n\in\N\cup\{0\}$ with $n < \frac{2\pi^2}{\period^2\epl^2 c}$, where $c = \frac{\sigma^2+1}{\sigma^2}$. If $\sigma^2\ne 1$, it holds that
\begin{equation}
\begin{split}
\sum_{k=1}^\infty \left|\int_\R \psi_n(x)e^{-\frac{(x-\mu)^2}{2\sigma^2}\pm i\frac{2\pi}{\period}k\frac{x}{\epl}}\dd x\right| &\le C(n,\sigma^2)e^{-\frac{2\pi^2}{\period^2\epl^2 c}}\left[\left(2^{n/2}n^{n/2}\abs{\frac{2}{c}-1}^{n/2} + 2^{n-1}\left(\frac{\abs{\mu}}{c\sigma^2}\right)^n\right) \right. \\
&\hspace{3.1cm}\times \left. \left(1 + \frac{\period^2\epl^2 c}{4\pi^2}\right) + 2^{n-1}\left(\frac{2\pi}{\period\epl c}\right)^n\left(1 + \frac12 e^{n-1}\right)\right],
\end{split}
\end{equation}
where $C(n,\sigma^2) = \frac{\pi^{1/4}4^n\left(1 + \frac{n}{2}\right)e^{-\frac{\mu^2}{2\sigma^2}+\frac{\mu^2}{2c\sigma^4}}}{\sqrt{c\cdot 2^{n-1}n!}}$. Otherwise, if $\sigma^2=1$, it holds that
\begin{equation}
\begin{split}
\sum_{k=1}^\infty \left|\int_\R \psi_n(x)e^{-\frac{(x-\mu)^2}{2\sigma^2}\pm i\frac{2\pi}{\period}k\frac{x}{\epl}}\dd x\right|
&\le \frac{\pi^{1/4}2^{n-1}e^{-\frac{\mu^2}{4}}}{\sqrt{2^n n!}} e^{-\frac{\pi^2}{\period^2\epl^2}}\left(\abs{\mu}^n\left(1 + \frac{\period^2\epl^2}{2\pi^2}\right) \right. \\
&\hspace{4.5cm} \left. + \left(\frac{2\pi}{\period\epl}\right)^n\left(1 + \frac12 e^{n-1} \right)\right).
\end{split}
\end{equation}
\end{lemma}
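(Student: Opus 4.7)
The plan is to apply \cref{lem:bound_Fourier_psi} term by term, extract the common exponential decay in $k$, bound $|\widetilde H_n|$ via \cref{lem:hermite-bound}, and estimate the resulting one-dimensional Gaussian sums.

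First, I would take absolute values inside the sum. Expanding $\tfrac{1}{2c}\bigl(\tfrac{\mu}{\sigma^2}\mp\tfrac{2\pi k i}{\period\epl}\bigr)^2 = \tfrac{\mu^2}{2c\sigma^4} - \tfrac{2\pi^2 k^2}{\period^2\epl^2 c} \mp \tfrac{2\pi k i \mu}{c\sigma^2\period\epl}$ and noting that the cross term is purely imaginary, the modulus of the Gaussian factor in \cref{lem:bound_Fourier_psi} equals $e^{-\mu^2/(2\sigma^2) + \mu^2/(2c\sigma^4) - 2\pi^2 k^2/(\period^2\epl^2 c)}$. This produces the prefactor $C(n,\sigma^2)$ (once the overall $4^n(1+n/2)$ from the Hermite bound is folded in) multiplying a series of the form $\sum_{k\ge 1}e^{-bk^2}|\widetilde H_n(\mp 2\pi k/(\period\epl);\sigma^2)|$, with $b \defeq 2\pi^2/(\period^2\epl^2 c)$. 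Next, assuming $\sigma^2\ne 1$, the argument of $H_n$ in \cref{lem:bound_Fourier_psi} has modulus $\sqrt{(2\pi k/(\period\epl))^2 + \mu^2/\sigma^4}/(c\sqrt{|2/c-1|})$, and \cref{lem:hermite-bound} extends to complex arguments by the triangle inequality applied to its defining polynomial representation, with $|z|$ replacing $x$. Combining this bound with the elementary inequality $(A^2+B^2)^{n/2}\le 2^{n-1}(A^n+B^n)$ for $n\ge 1$ decomposes the estimate on $|\widetilde H_n|$ into three pieces: a $k$-independent term proportional to $2^{n/2}n^{n/2}|2/c-1|^{n/2}$, another $k$-independent term proportional to $2^{n-1}(|\mu|/(c\sigma^2))^n$, and one growing like $k^n$ with coefficient $2^{n-1}(2\pi/(\period\epl c))^n$.

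It then remains to bound the Gaussian sums over $k$. The hypothesis $n < b$ is exactly what makes $x^n e^{-bx^2}$ monotonically decreasing on $[1,\infty)$, since its maximum over $(0,\infty)$ sits at $x^\star = \sqrt{n/(2b)} < 1/\sqrt 2$; this monotonicity justifies an integral comparison. For the two $k$-independent pieces I would use
\begin{equation*}
\sum_{k=1}^\infty e^{-bk^2} \le e^{-b} + \int_1^\infty e^{-bx^2}\dd x \le e^{-b} + \int_1^\infty x\,e^{-bx^2}\dd x = e^{-b}\Bigl(1 + \frac{1}{2b}\Bigr),
\end{equation*}
which matches the factor $1 + \period^2\epl^2 c/(4\pi^2)$. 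For the $k^n$-piece I would factor $k^n e^{-bk^2} = (ke^{-k^2})^n e^{-(b-n)k^2}$ and use $ke^{-k^2}\le e^{-1}$ for $k\ge 1$, reducing the tail to a Gaussian sum with rate $b-n>0$; tracking constants then produces the factor $1 + \tfrac12 e^{n-1}$. The case $\sigma^2 = 1$ follows the same outline but uses the simpler identity $\widetilde H_n(x;1) = (\mu - ix)^n$ from \cref{lem:bound_Fourier_psi} directly, so that \cref{lem:hermite-bound} is not needed and one estimates $|\mu \pm 2\pi ki/(\period\epl)|^n \le 2^{n-1}(|\mu|^n + (2\pi k/(\period\epl))^n)$.

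The hard part will be calibrating the constants in this last step, particularly the factor $\tfrac12 e^{n-1}$ in the bound on $\sum_k k^n e^{-bk^2}$: the naive splitting introduces an unwanted dependence on $b-n$ that is delicate when $b$ is only marginally larger than $n$, and it is precisely the combination of the pointwise inequality $ke^{-k^2}\le e^{-1}$ with the monotonicity-based integral estimate on $[1,\infty)$ that produces the clean, $b$-independent form stated in the lemma.
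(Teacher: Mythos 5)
Your outline up to the estimation of the Gaussian sums matches the paper's proof: take moduli via \cref{lem:bound_Fourier_psi}, bound $\abs{\widetilde H_n}$ via \cref{lem:hermite-bound} (the complex-argument extension you invoke is implicit in the paper as well), split via $(a+b)^n\le 2^{n-1}(a^n+b^n)$, and control $\sum_k e^{-bk^2}$ by comparing with $\int_1^\infty ye^{-by^2}\dd y$. The gap is exactly where you flag it.

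The factorization $k^n e^{-bk^2}=(ke^{-k^2})^n e^{-(b-n)k^2}$ with $ke^{-k^2}\le e^{-1}$ reduces the problem to $\sum_k e^{-(b-n)k^2}$, and the only available estimate for that sum — the same monotonicity/integral comparison you use for the $k$-independent pieces — yields $e^{-(b-n)}\bigl(1+\frac{1}{2(b-n)}\bigr)$. Multiplying by $e^{-n}$ gives $\sum_k k^ne^{-bk^2}\le e^{-b}\bigl(1+\frac{1}{2(b-n)}\bigr)$, \emph{not} $e^{-b}\bigl(1+\frac12 e^{n-1}\bigr)$. Under the hypothesis $n<b$ the gap $b-n$ can be arbitrarily small, so $\frac{1}{2(b-n)}$ is unbounded while $\frac12 e^{n-1}$ is a fixed function of $n$; the two are genuinely incomparable and the claimed constant cannot be recovered this way. (Indeed $\sup_{a>0}e^{a}\sum_k e^{-ak^2}=\infty$, so no $b$-independent factor can emerge from the reduction to $\sum_k e^{-(b-n)k^2}$ alone.) What the paper actually does is different: it leaves the integral as $\int_1^\infty y^n e^{-by^2}\dd y$, writes $y^n = y\cdot e^{(n-1)\log y}$, and uses the pointwise bound $\log y\le y^2$ (valid for $y\ge1$, needing $n\ge1$) to obtain $y^n e^{-by^2}\le y\,e^{-(b-n+1)y^2}$. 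The explicit antiderivative then gives $\frac{1}{2(b-n+1)}e^{-(b-n+1)}$, and because $n<b$ forces $b-n+1>1$, the prefactor is bounded by $\tfrac12$ uniformly and the exponential becomes $e^{n-1}e^{-b}$. That ``$+1$'' shift in the decay rate, supplied by keeping one factor of $y$ and bounding only $y^{n-1}$, is the essential device your factorization misses: your version uses $k^n\le e^{n(k^2-1)}$, which shifts the rate to $b-n$ rather than $b-n+1$ and therefore cannot avoid the singularity as $b\downarrow n$.
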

\begin{proof}
From \cref{lem:bound_Fourier_psi}, we have
\begin{equation}
\int_\R \psi_n(x) e^{-\frac{(x-\mu)^2}{2\sigma^2} \pm i \frac{2\pi}{\period} k \frac{x}\epl} \dd x = \frac{\pi^{1/4}}{\sqrt{c\cdot 2^{n-1} n!}}e^{-\frac{\mu^2}{2\sigma^2} + \frac{1}{2c}\left(\frac{\mu}{\sigma^2}\mp\frac{2\pi ki}{\period\epl}\right)^2} \widetilde{H}_n\left(\mp \frac{2\pi k}{\period\epl}; \sigma^2\right).
\end{equation}
We consider the two cases $\sigma^2\ne 1$ and $\sigma^2 = 1$ separately. \\
\textbf{Case $\sigma^2\ne 1$.} By \cref{lem:hermite-bound}, we have
\begin{equation}
\abs{H_n(x)} \le 4^n\left(1 + \frac{n}{2}\right)\left(2^{n/2}n^{n/2} + \abs{x}^n\right).
\end{equation}
Thus, 
\begin{equation}
\abs{\widetilde{H}_n(\mp 2\pi k/(\period\epl); \sigma^2)} \le \abs{\frac{2}{c}-1}^{n/2}4^n\left(1 + \frac{n}{2}\right)\left(2^{n/2}n^{n/2} + \left(\frac{\frac{2\pi k}{\period\epl} + \abs{\mu}/\sigma^2}{c\sqrt{\abs{2/c-1}}}\right)^n\right),
\end{equation}
and
\begin{equation}
\abs{e^{-\frac{\mu}{2\sigma^2}+\frac{1}{2c}\left(\frac{\mu}{\sigma^2}\mp \frac{2\pi ki}{\period\epl}\right)^2}} = e^{-\frac{\mu^2}{2\sigma^2}+\frac{\mu^2}{2c\sigma^4}-\frac{2\pi^2 k^2}{\period^2\epl^2 c}}.
\end{equation}
It therefore follows that
\begin{equation}
\begin{split}
&\abs{\int_\R \psi_n(x)e^{-\frac{(x-\mu)^2}{2\sigma^2}\pm i\frac{2\pi}{\period}k\frac{x}{\epl}}\dd x}\\
&\hspace{2cm}\le \frac{\pi^{1/4}e^{-\frac{\mu^2}{2\sigma^2}+\frac{\mu^2}{2c\sigma^4}}}{\sqrt{c\cdot 2^{n-1}n!}}e^{-\frac{2\pi^2 k^2}{\period^2\epl^2 c}}\abs{\frac{2}{c}-1}^{n/2}4^n\left(1 + \frac{n}{2}\right)\left(2^{n/2}n^{n/2} + \left(\frac{\frac{2\pi k}{\period\epl} + \abs{\mu}/\sigma^2}{c\sqrt{\abs{2/c-1}}}\right)^n\right)\\
&\hspace{2cm}\le \frac{\pi^{1/4}4^n\left(1 + \frac{n}{2}\right)e^{-\frac{\mu^2}{2\sigma^2}+\frac{\mu^2}{2c\sigma^4}}}{\sqrt{c\cdot 2^{n-1}n!}}\left(2^{n/2}n^{n/2}\abs{\frac{2}{c}-1}^{n/2}e^{-\frac{2\pi^2k^2}{\period^2\epl^2 c}} \right. \\
&\hspace{7.25cm}\left. + 2^{n-1}\left(\frac{2\pi k}{\period\epl c}\right)^ne^{-\frac{2\pi^2k^2}{\period^2\epl^2 c}} + 2^{n-1}\left(\frac{\abs{\mu}}{c\sigma^2}\right)^ne^{-\frac{2\pi^2k^2}{\period^2\epl^2 c}}\right),
\end{split}
\end{equation}
where in the last line we used $(a+b)^n \le 2^{n-1}(a^n+b^n)$. Summing over $k$ gives
\begin{equation}
\begin{split}
&\sum_{k=1}^\infty \abs{\int_\R \psi_n(x)e^{-\frac{(x-\mu)^2}{2\sigma^2}\pm i\frac{2\pi}{\period}k\frac{x}{\epl}}\dd x}\\
&\quad \le C(n,\sigma^2) \left[ \left(2^{n/2}n^{n/2}\abs{\frac{2}{c}-1}^{n/2} + 2^{n-1}\left(\frac{\abs{\mu}}{c\sigma^2}\right)^n\right)\sum_{k=1}^\infty e^{-\frac{2\pi^2 k^2}{\period^2\epl^2 c}} + 2^{n-1}\left(\frac{2\pi}{\period\epl c}\right)^n\sum_{k=1}^\infty k^n e^{-\frac{2\pi^2 k^2}{\period^2\epl^2 c}} \right],
\end{split}
\end{equation}
where $C(n,\sigma^2) = \frac{\pi^{1/4}4^n\left(1 + \frac{n}{2}\right)e^{-\frac{\mu^2}{2\sigma^2}+\frac{\mu^2}{2c\sigma^4}}}{\sqrt{c\cdot 2^{n-1}n!}}$. First consider the first sum. Because $e^{-\frac{2\pi^2 k^2}{\period^2\epl^2 c}}$ is decreasing in $k$, we have
\begin{equation}
\sum_{k=1}^\infty e^{-\frac{2\pi^2 k^2}{\period^2\epl^2 c}} \le e^{-\frac{2\pi^2}{\period^2\epl^2 c}} + \int_1^\infty e^{-\frac{2\pi^2 y^2}{\period^2\epl^2 c}}\dd y \le e^{-\frac{2\pi^2}{\period^2\epl^2 c}} + \int_1^\infty ye^{-\frac{2\pi^2 y^2}{\period^2\epl^2 c}}\dd y = e^{-\frac{2\pi^2}{\period^2\epl^2 c}} + \frac{\period^2\epl^2c}{4\pi^2}e^{-\frac{2\pi^2}{\period^2\epl^2 c}}.
\end{equation}
Now consider the second sum, $\sum_{k=1}^\infty k^n e^{-\frac{2\pi^2 k^2}{\period^2\epl^2 c}}$. Note that $y\mapsto y^n e^{-Ay^2}$ is maximized at $y=\sqrt{\frac{n}{2A}}$ and is decreasing for $y > \sqrt{\frac{n}{2A}}$. Defining $A = \frac{2\pi^2}{\period^2\epl^2c}$, we have that $\sqrt{\frac{n}{2A}}<1$ holds by our assumption. Combining these facts, we obtain
\begin{equation}
\sum_{k=1}^\infty k^n e^{-\frac{2\pi^2 k^2}{\period^2\epl^2c}} \le e^{-A} + \int_1^\infty y^n e^{-Ay^2}\dd y.
\end{equation}
Then since $n < A$ by our assumption, we have
\begin{equation}
\int_1^\infty y^n e^{-Ay^2}\dd y = \int_1^\infty y e^{(n-1)\log(y)} e^{-Ay^2}\dd y \le \int_1^\infty y e^{(n-1)y^2} e^{-Ay^2}\dd y,
\end{equation}
which implies
\begin{equation}
\int_1^\infty y^n e^{-Ay^2}\dd y \le \left[\frac{1}{2(n-1-A)}e^{-(A+1-n)y^2}\right]_{1}^\infty < \frac{1}{2}e^{-(A+1-n)}.
\end{equation} 
Thus,
\begin{equation}
\begin{split}
\sum_{k=1}^\infty \abs{\int_\R \psi_n(x)e^{-\frac{(x-\mu)^2}{2\sigma^2}\pm i\frac{2\pi}{\period}k\frac{x}{\epl}}\dd x} &\le C(n,\sigma^2) \left[  \left(2^{n/2}n^{n/2}\abs{\frac{2}{c}-1}^{n/2} + 2^{n-1}\left(\frac{\abs{\mu}}{c\sigma^2}\right)^n\right)\left(1 + \frac{\period^2\epl^2 c}{4\pi^2}\right) \right. \\ 
&\hspace{2.5cm} + \left. 2^{n-1}\left(\frac{2\pi}{\period\epl c}\right)^n \left(1 + \frac12 e^{n-1}\right) \right]e^{-\frac{2\pi^2}{\period^2\epl^2 c}},
\end{split}
\end{equation}
which implies the desired result for $\sigma^2 \neq 1$. \\
\textbf{Case $\sigma^2=1$.} In this case, we have
\begin{equation}
|\widetilde{H}_n(\mp 2\pi k/(\period\epl); \sigma^2)| = \abs{\mu \pm \frac{2\pi ki}{\period\epl}}^n \le 2^{n-1}\abs{\mu}^n + 2^{n-1}\left(\frac{2\pi k}{\period\epl}\right)^n.
\end{equation}
Thus,
\begin{equation}
\sum_{k=1}^\infty \left|\int_\R \psi_n(x)e^{-\frac{(x-\mu)^2}{2\sigma^2}\pm i\frac{2\pi}{\period}k\frac{x}{\epl}}\dd x\right| = \frac{\pi^{1/4}e^{-\frac{\mu^2}{4}}}{\sqrt{2^n n!}}2^{n-1} \left[ \abs{\mu}^n\sum_{k=1}^\infty e^{-\frac{\pi^2 k^2}{\period^2\epl^2}} + \left(\frac{2\pi}{\period\epl}\right)^n\sum_{k=1}^\infty k^ne^{-\frac{\pi^2 k^2}{\period^2\epl^2}} \right].
\end{equation}
As computed in the previous case, we have
\begin{equation}
\sum_{k=1}^\infty e^{-\frac{\pi^2 k^2}{\period^2\epl^2}} \le e^{-\frac{\pi^2}{\period^2\epl^2}} + \frac{\period^2\epl^2}{2\pi^2}e^{-\frac{\pi^2}{\period^2\epl^2}} \qquad \text{and} \qquad \sum_{k=1}^\infty k^n e^{-\frac{\pi^2k^2}{\period^2\epl^2}} \le e^{-\frac{\pi^2}{\period^2\epl^2}} + \frac{1}{2}e^{n-1 - \frac{\pi^2}{\period^2\epl^2}},
\end{equation}
which yield the desired result for $\sigma^2 = 1$ and concludes the proof.
\end{proof}

\subsubsection{Quadratic potential and the Gaussian setting}

In this section, we restrict the analysis to the quadratic potential $V(x) = (x-\mu)^2/2$ for any $\mu\in\R$, which implies that $\rho$ is the density of a Gaussian random variable $\mathcal N(\mu, \sigma^2)$, and therefore the normalization constant is $Z = \sqrt{2\pi\sigma^2}$. Using its characteristic function, we first derive a closed-form expression for the Fourier coefficients of the Gaussian density.

\begin{lemma} \label{lem:bound_Fourier}
Let $\mu \in \R$. Then, for every $k\in\N$, it holds that
\begin{equation}
\int_\R e^{-\frac{(x-\mu)^2}{2\sigma^2} \pm i \frac{2\pi}{\period} k \frac{x}\epl} \dd x = \sqrt{2\pi\sigma^2}e^{\pm\frac{2\pi k i\mu}{\period\epl}-\frac12\sigma^2\left(\frac{2\pi k}{\period\epl}\right)^2}.
\end{equation}
\end{lemma}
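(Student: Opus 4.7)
The plan is to compute the integral directly by the standard completion-of-the-square argument that identifies it with the characteristic function of a Gaussian, and then justify the complex contour shift.

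First I would reduce to the centered case via the translation $y = x - \mu$, which factors out the phase $e^{\pm i \omega \mu}$, where $\omega \defeq \frac{2\pi k}{\period \epl}$. This leaves the integral
\begin{equation}
\int_\R e^{-\frac{y^2}{2\sigma^2} \pm i \omega y} \dd y.
\end{equation}
Next I would complete the square in the exponent, writing
\begin{equation}
-\frac{y^2}{2\sigma^2} \pm i\omega y = -\frac{1}{2\sigma^2}\bigl(y \mp i\sigma^2 \omega\bigr)^2 - \frac{\sigma^2 \omega^2}{2},
\end{equation}
so that the integral becomes $e^{-\sigma^2 \omega^2/2}$ times the line integral of $e^{-(y \mp i\sigma^2\omega)^2/(2\sigma^2)}$ over $\R$. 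A standard contour-shift argument (close a rectangular contour in the complex plane and use that the integrand decays to zero as $\lvert \mathrm{Re}(z)\rvert \to \infty$ within horizontal strips) shows that this equals the real Gaussian integral $\int_\R e^{-y^2/(2\sigma^2)} \dd y = \sqrt{2\pi\sigma^2}$. Reassembling the factors yields the claimed identity.

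There is essentially no obstacle here; the only subtle point is the contour-shift justification, but this is a textbook fact. Alternatively, one can cite directly the well-known formula for the characteristic function of a $\mathcal{N}(\mu,\sigma^2)$ random variable, namely $\E[e^{i\omega X}] = e^{i\omega\mu - \sigma^2\omega^2/2}$, multiplied through by the normalization $\sqrt{2\pi\sigma^2}$, with $\omega = \pm \frac{2\pi k}{\period\epl}$. Either route gives a proof of at most a few lines.
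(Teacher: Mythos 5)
Your proposal is correct, and your second alternative (citing the characteristic function of a $\mathcal{N}(\mu,\sigma^2)$ random variable and multiplying through by $\sqrt{2\pi\sigma^2}$) is exactly the paper's proof. Your first route, completing the square and shifting the contour, is just the standard derivation of that same characteristic-function formula, so the two arguments are essentially identical.
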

\begin{proof}
Consider a random variable $X\sim N(\mu, \sigma^2)$. The characteristic function of $X$ is given by
\begin{equation}
\phi_X(t) = \E \left[ e^{itX} \right] = e^{it\mu-\frac12 \sigma^2 t^2}.
\end{equation}
Thus, we have
\begin{equation} \label{eq:normal-characteristic-fn}
\int_\R \frac{1}{\sqrt{2\pi\sigma^2}} e^{-\frac{(x-\mu)^2}{2\sigma^2}+itx}\dd x = e^{it\mu-\frac12 \sigma^2 t^2}.
\end{equation}
The result follows from letting $t=\pm \frac{2\pi k}{\period\epl}$ and multiplying \eqref{eq:normal-characteristic-fn} by $\sqrt{2\pi\sigma^2}$.
\end{proof}

In the following result, we derive the rate of convergence of some quantities related to the expectations of the Hermite functions with respect to the multiscale invariant measure towards their homogenized limit.

\begin{lemma} \label{lem:convergence_Z_psi}
Let $Z$ and $Z^\epl$ be the normalization constants of the invariant densities $\rho$ and $\rho^\epl$ in equation \eqref{eq:invariant_density} with potential $V = (x-\mu)^2/2$ and $\mu \in \R$. Then, it holds that
\begin{equation}
\abs{\frac{\Pi Z}\period - Z^\epl} \le \frac{4\Pi}{\period}\sqrt{2\pi\sigma^2}\left(1 + \frac{\period^2\epl^2}{4\pi^2\sigma^2}\right)e^{-\frac{\sigma^2}{2}\left(\frac{2\pi}{\period\epl}\right)^2}.
\end{equation}
where $\Pi$ is defined in equation \eqref{eq:Pi_def}. Now, let $n\in\N$ with $n < \frac{2\pi^2}{\period^2\epl^2 c}$, where $c = \frac{\sigma^2+1}{\sigma^2}$. Then, if $\sigma^2\ne 1$, it holds that
\begin{equation}
\begin{split}
&\abs{\int_\R \psi_n(x) e^{- \frac{(x-\mu)^2}{2\sigma^2}} \left( e^{- \frac{1}{\sigma^2} p \left( \frac{x}\epl \right)} - \frac{\Pi}{\period} \right) \dd x} \\
&\hspace{3cm}\le \frac{4\Pi C(n,\sigma^2)e^{-\frac{2\pi^2}{\period^2\epl^2 c}}}{\period}\left[\left(2^{n/2}n^{n/2}\abs{\frac{2}{c}-1}^{n/2} + 2^{n-1}\left(\frac{\abs{\mu}}{c\sigma^2}\right)^n\right)\left(1 + \frac{\period^2\epl^2 c}{4\pi^2}\right) \right. \\
&\hspace{7cm}\left. + 2^{n-1}\left(\frac{2\pi}{\period\epl c}\right)^n\left(1 + \frac12 e^{n-1}\right)\right],
\end{split}
\end{equation}
where $C(n,\sigma^2) = \frac{\pi^{1/4}4^n\left(1 + \frac{n}{2}\right)e^{-\frac{\mu^2}{2\sigma^2}+\frac{\mu^2}{2c\sigma^4}}}{\sqrt{c\cdot 2^{n-1}n!}}$. Otherweise, if $\sigma^2=1$, it holds that
\begin{equation}
\begin{split}
\abs{\int_\R \psi_n(x) e^{- \frac{(x-\mu)^2}{2\sigma^2}} \left( e^{- \frac{1}{\sigma^2} p \left( \frac{x}\epl \right)} - \frac{\Pi}{\period} \right) \dd x} \le \frac{4\Pi\pi^{1/4}2^{n-1}e^{-\frac{\mu^2}{4}}}{\period\sqrt{2^n n!}} e^{-\frac{\pi^2}{\period^2\epl^2}}&\left(\abs{\mu}^n\left(1 + \frac{\period^2\epl^2}{2\pi^2}\right) \right. \\
&\quad\left. + \left(\frac{2\pi}{\period\epl}\right)^n\left(1 + \frac12 e^{n-1}\right)\right).
\end{split}
\end{equation}
\end{lemma}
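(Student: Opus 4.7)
The strategy is to expand the $\period$-periodic factor $y \mapsto e^{-p(y)/\sigma^2}$ as a Fourier series and exploit the fact that its zeroth Fourier coefficient is exactly $\Pi/\period$. Concretely, write
\begin{equation*}
e^{-\frac{1}{\sigma^2}p(y)} - \frac{\Pi}{\period} = \sum_{k\ne 0} c_k\, e^{i\frac{2\pi k}{\period}y}, \qquad c_k = \frac{1}{\period}\int_0^\period e^{-\frac{1}{\sigma^2}p(y)} e^{-i\frac{2\pi k}{\period}y}\dd y,
\end{equation*}
where $|c_k| \le \Pi/\period$ uniformly in $k$ since $e^{-p/\sigma^2}>0$. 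Substituting $y = x/\epl$ converts the difference $e^{-\frac{1}{\sigma^2}p(x/\epl)}-\Pi/\period$ into a sum of complex exponentials of frequency $2\pi k/(\period\epl)$, and both integrals in the statement then reduce to weighted sums of the integrals computed in the earlier lemmas.

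For the first bound, I would multiply the Fourier series above by $e^{-(x-\mu)^2/(2\sigma^2)}$, integrate over $\R$, and swap sum and integral (justified since the series converges absolutely and uniformly on compacts, while the Gaussian kills the tail). Each resulting integral is evaluated in closed form by \cref{lem:bound_Fourier}, yielding
\begin{equation*}
\left|\frac{\Pi Z}{\period} - Z^\epl\right| \le \frac{\Pi}{\period}\sqrt{2\pi\sigma^2}\sum_{k\ne 0} e^{-\frac{\sigma^2}{2}\left(\frac{2\pi k}{\period\epl}\right)^2},
\end{equation*}
after using $|c_k|\le\Pi/\period$ and $|e^{\pm 2\pi ki\mu/(\period\epl)}|=1$. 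The sum over $k\in\Z\setminus\{0\}$ is then bounded by $2\sum_{k\ge 1} e^{-Ak^2}$ with $A=2\pi^2\sigma^2/(\period^2\epl^2)$, and handled by the monotonicity argument already used in the proof of \cref{lem:psi-fourier-series-bound}: estimate the sum by $e^{-A}+\int_1^\infty e^{-Ay^2}\dd y \le e^{-A}+(2A)^{-1}e^{-A}$. This yields precisely the factor $(1+\period^2\epl^2/(4\pi^2\sigma^2))\exp(-\sigma^2(2\pi/(\period\epl))^2/2)$ in the target estimate, with a numerical constant no larger than $4$.

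For the second bound, the same Fourier expansion reduces the integral against $\psi_n(x)\,e^{-(x-\mu)^2/(2\sigma^2)}$ to
\begin{equation*}
\sum_{k\ne 0} c_k \int_\R \psi_n(x)\, e^{-\frac{(x-\mu)^2}{2\sigma^2}+ i\frac{2\pi k}{\period\epl}x}\dd x,
\end{equation*}
and after splitting into $k>0$ and $k<0$, applying the triangle inequality together with $|c_k|\le\Pi/\period$, each of the two halves is controlled exactly by \cref{lem:psi-fourier-series-bound} (the hypothesis $n<2\pi^2/(\period^2\epl^2 c)$ is already assumed, so that lemma applies). The case distinction $\sigma^2\ne 1$ versus $\sigma^2=1$ is inherited directly from \cref{lem:psi-fourier-series-bound}, and the factor $4\Pi/\period$ in the statement accommodates the factor $2$ from summing over $k>0$ and $k<0$ together with the factor $\Pi/\period$ from the Fourier coefficient bound.

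The steps are individually routine given the preceding lemmas; the only nontrivial point is the interchange of summation and integration, which I would justify by Fubini using the pointwise bound $|c_k|\le \Pi/\period$ together with the Gaussian decay of $e^{-(x-\mu)^2/(2\sigma^2)}$ (in the first case) or the absolute summability established inside \cref{lem:psi-fourier-series-bound} (in the second). I expect no genuine obstacle: once the expansion is in place, the result is a bookkeeping exercise of matching the estimates already derived for the building-block integrals against the uniform Fourier-coefficient bound on the periodic factor.
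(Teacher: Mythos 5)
Your proof is correct and takes essentially the same route as the paper's: expand the periodic factor in a Fourier series (so the mean term $\Pi/\period$ cancels and only nonzero modes survive), bound the Fourier coefficients uniformly, substitute $y=x/\epl$, and reduce each resulting integral to the closed forms in Lemmas \ref{lem:bound_Fourier} and \ref{lem:psi-fourier-series-bound}, then estimate the Gaussian tail sums exactly as in those lemmas. The one minor difference is cosmetic: the paper expands $u(y)=\Pi/\period-e^{-p(y)/\sigma^2}$ and uses the crude estimate $\abs{c_k^\pm}\le\frac1\period\int_0^\period\abs{u}\le 2\Pi/\period$, whereas you observe that for $k\neq 0$ the coefficient coincides with that of $e^{-p/\sigma^2}$ alone, giving the sharper bound $\abs{c_k}\le\Pi/\period$; after summing over $k>0$ and $k<0$ your prefactor comes out as $2\Pi/\period$, which is stronger than, and therefore implies, the paper's $4\Pi/\period$. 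Everything else, including the justification of the interchange of sum and integral and the invocation of the hypothesis $n<2\pi^2/(\period^2\epl^2 c)$, matches.
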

\begin{proof}
Let
\begin{equation}
u(y) = \frac{\Pi}{\period} -  e^{- \frac{1}{\sigma^2} p(y)},
\end{equation}
and notice that
\begin{equation}
\frac1\period \int_0^\period u(y) \dd y = 0.
\end{equation}
Therefore, the Fourier series of $u$ reads
\begin{equation}
u(y) = \sum_{k=1}^\infty \left( c_k^+ e^{i \frac{2\pi}{\period} ky} + c_k^- e^{- i \frac{2\pi}{\period} ky} \right), \qquad c_k^\pm = \frac1\period \int_0^\period u(y) e^{\mp i \frac{2\pi}\period ky} \dd y,
\end{equation}
where it holds
\begin{equation}
\abs{c_k^\pm} \le \frac1\period \int_0^\period \abs{u(y)} \dd y \le \frac{2\Pi}{\period}.
\end{equation}
Then, we have
\begin{equation}
\abs{\frac{\Pi Z}{\period} - Z^\epl} = \abs{\int_\R e^{- \frac{(x-\mu)^2}{2\sigma^2}} \left( \frac\Pi{\period} -  e^{- \frac{1}{\sigma^2} p \left( \frac{x}\epl \right)} \right) \dd x} = \abs{\int_\R e^{- \frac{(x-\mu)^2}{2\sigma^2}} u \left( \frac{x}\epl \right) \dd x},
\end{equation}
which implies
\begin{equation}
\begin{split}
\abs{\frac{\Pi Z}\period - Z^\epl} &\le \sum_{k=1}^\infty \left( \abs{c_k^+} \abs{\int_\R e^{-\frac{(x-\mu)^2}{2\sigma^2} + i \frac{2\pi}\period k \frac{x}\epl} \dd x} + \abs{c_k^-} \abs{\int_\R e^{-\frac{(x-\mu)^2}{2\sigma^2} - i \frac{2\pi}\period k \frac{x}\epl} \dd x} \right) \\
&\le \frac{2\Pi}\period \sum_{k=1}^\infty \left( \abs{\int_\R e^{-\frac{(x-\mu)^2}{2\sigma^2} + i \frac{2\pi}\period k \frac{x}\epl} \dd x} + \abs{\int_\R e^{-\frac{(x-\mu)^2}{2\sigma^2} - i \frac{2\pi}\period k \frac{x}\epl} \dd x} \right).
\end{split}
\end{equation}
Using \cref{lem:bound_Fourier}, we obtain
\begin{equation}
\label{eq:exponential-sq-sum-bound}
\abs{\frac{\Pi Z}\period - Z^\epl} \le \frac{4\Pi}\period \sqrt{2\pi\sigma^2} \sum_{k=1}^\infty e^{-\frac12 \sigma^2\left(\frac{2\pi k}{\period\epl}\right)^2}.
\end{equation}
We now bound the sum by
\begin{equation}
\begin{split}
\sum_{k=1}^\infty e^{-\frac12 \sigma^2\left(\frac{2\pi k}{\period\epl}\right)^2} 
&\le e^{-\frac12\sigma^2\left(\frac{2\pi}{\period\epl}\right)^2} + \int_1^\infty e^{-\frac12 \sigma^2\left(\frac{2\pi y}{\period\epl}\right)^2}\dd y\\
&\le e^{-\frac12\sigma^2\left(\frac{2\pi}{\period\epl}\right)^2} + \int_1^\infty y e^{-\frac12 \sigma^2\left(\frac{2\pi y}{\period\epl}\right)^2}\dd y\\
&= e^{-\frac{\sigma^2}{2}\left(\frac{2\pi}{\period\epl}\right)^2} + \frac{\period^2\epl^2}{4\pi^2\sigma^2}e^{-\frac{\sigma^2}{2}\left(\frac{2\pi}{\period\epl}\right)^2},
\end{split}
\end{equation}
which combines with \eqref{eq:exponential-sq-sum-bound} to obtain the first desired result. Let us now focus on the second estimate. Using again the definition of $u$ and its Fourier series, we have
\begin{equation}
\begin{split}
\abs{\int_\R \psi_n(x) e^{- \frac{(x-\mu)^2}{2\sigma^2}} \left( e^{- \frac{1}{\sigma^2} p \left( \frac{x}\epl \right)} - \frac\Pi{\period} \right) \dd x} &= \abs{\int_\R \psi_n(x) e^{- \frac{(x-\mu)^2}{2\sigma^2}} u \left( \frac{x}\epl \right) \dd x} \\
&\hspace{-3cm}\le \frac{2\Pi}\period \sum_{k=1}^\infty \left( \abs{\int_\R \psi_n(x) e^{-\frac{(x-\mu)^2}{2\sigma^2} + i \frac{2\pi}\period k \frac{x}\epl} \dd x} + \abs{\int_\R \psi_n(x) e^{-\frac{(x-\mu)^2}{2\sigma^2} - i \frac{2\pi}\period k \frac{x}\epl} \dd x} \right),
\end{split}
\end{equation}
which, due to \cref{lem:psi-fourier-series-bound}, implies the second and third results and concludes the proof.
\end{proof}

\begin{remark} \label{rem:polynomial_rate}
Similar estimates for a general potential $V$ can be obtained using either Cesàro mean approximations or repeated integration by parts; see \cite[Corollary A.4]{BKP25} and \cite[Theorem 4.14]{Zan22}, respectively. However, both approaches yield only a polynomial convergence rate $\mathcal O(\epl^{\mathscr \ell})$, where the speed, i.e., the power $\ell$ in the decay, depends on the regularity of the potential and the test functions. In \cref{lem:convergence_Z_psi}, by considering the specific case where the test functions are the analytic Hermite functions $\psi_n$, we improve upon these estimates by establishing the optimal exponential convergence rate $\mathcal O(e^{-1/\epl^2})$ in the Gaussian setting.
\end{remark}

The previous lemma allows us to compute the convergence rate of the coefficients $\alpha_n^\epl$ in the Fourier expansion of $\rho^\epl$ towards their homogenized counterparts $\alpha_n$. This is quantified next.

\begin{lemma} \label{lem:bound_alpha}
Let $\alpha_n^\epl$ and $\alpha_n$ with $n\in\N\cup\{0\}$ be the Fourier coefficients of the invariant densities $\rho$ and $\rho^\epl$ in equation \eqref{eq:invariant_density} with potential $V = (x-\mu)^2/2$ and $\mu \in \R$. Moreover, set $n<\frac{2\pi^2}{\period^2\epl^2c}$, where $c=\frac{\sigma^2+1}{\sigma^2}$, and $\epl \le \frac{\pi\sqrt{\sigma^2}}{\period\sqrt{\log(4)}}$. Then, if $\sigma^2\ne 1$, it holds
\begin{equation}
\begin{split}
\abs{\alpha_n^\epl - \alpha_n}
&\le \frac{8\pi^{-1/4}\left(1 + \frac{\sigma^2c}{4\log(4)}\right)(1+\frac{n}{2})}{\sqrt{\sigma^2+1}} e^{-\frac{2\pi^2}{\period^2\epl^2 c} + \frac{n}{2} + \frac{n}{2}\log\abs{\frac{2}{c}-1} + n\log(4)}\\
&\; + \frac{4\pi^{-1/4}\left(1 + \frac{\sigma^2c}{4\log(4)}\right)\left(1 + \frac{n}{2}\right)}{\sqrt{(\sigma^2+1)n!}}\left(\frac{4\sqrt{2}\abs{\mu}}{\sigma^2+1}\right)^ne^{-\frac{2\pi^2}{\period^2\epl^2 c}} + \frac{6\pi^{-1/4}8^n\left(1 + \frac{n}{2}\right)}{\sqrt{(\sigma^2+1)2^nn!}}\left(\frac{2\pi e}{\period\epl c}\right)^ne^{-\frac{2\pi^2}{\period^2\epl^2 c}}\\ 
&\;+ 16\pi^{-1/4}e^{-\frac{\sigma^2}{2}\left(\frac{2\pi}{\period\epl}\right)^2},
\end{split}
\end{equation}
and, if $\sigma^2 = 1$, we have
\begin{equation}
\abs{\alpha_n^\epl - \alpha_n} \le \frac{16\pi^{-1/4}\abs{\mu}^n2^{n-1}}{\sqrt{2^{n+1} n!}} e^{-\frac{\mu^2}{4}-\frac{\pi^2}{\period^2\epl^2}} + \frac{12\pi^{-1/4}2^{n-1}}{\sqrt{2^{n+1} n!}} e^{-\frac{\mu^2}{4}-\frac{\pi^2}{\period^2\epl^2}}\left(\frac{2\pi e}{\period\epl}\right)^n + 16\pi^{-1/4}e^{-\frac{\sigma^2}{2}\left(\frac{2\pi}{\period\epl}\right)^2}.
\end{equation}
\end{lemma}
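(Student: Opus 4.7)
The key idea is to decompose $\alpha_n^\epl - \alpha_n$ so that \cref{lem:convergence_Z_psi} can be directly invoked on each piece. Writing $\rho^\epl$ and $\rho$ explicitly and inserting $\pm \Pi/\period$ inside the integrand yields the decomposition
\begin{equation}
\alpha_n^\epl - \alpha_n = \frac{1}{Z^\epl}\int_\R \psi_n(x) e^{-\frac{(x-\mu)^2}{2\sigma^2}}\left(e^{-\frac{1}{\sigma^2}p(x/\epl)} - \frac{\Pi}{\period}\right)\dd x + \left(\frac{\Pi}{\period Z^\epl} - \frac{1}{Z}\right)\int_\R \psi_n(x) e^{-\frac{(x-\mu)^2}{2\sigma^2}}\dd x \eqdef I_1 + I_2.
\end{equation}
The term $I_1$ carries the oscillatory contribution of the fast-scale potential, and the second and third estimates of \cref{lem:convergence_Z_psi} (depending on whether $\sigma^2 \ne 1$ or $\sigma^2 = 1$) bound its integral factor. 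The term $I_2$ encodes the mismatch between the multiscale normalization constant $Z^\epl$ and its homogenized counterpart, and can be rewritten as $(\Pi Z/\period - Z^\epl)/(Z Z^\epl)$ times a bounded integral, with $|\Pi Z/\period - Z^\epl|$ controlled by the first estimate of \cref{lem:convergence_Z_psi}.

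A crucial preliminary step is to establish a uniform lower bound $Z^\epl \ge \Pi Z/(2\period)$ so that we can safely replace $1/Z^\epl$ by $2\period/(\Pi Z)$. This follows from the assumption $\epl \le \pi\sigma/(\period\sqrt{\log 4})$: it forces $e^{-\sigma^2(2\pi/(\period\epl))^2/2} \le 1/16$ and $\period^2\epl^2/(4\pi^2\sigma^2) \le 1/(4\log 4) < 1$, which, when plugged into the first bound of \cref{lem:convergence_Z_psi}, yields $|Z^\epl - \Pi Z/\period| \le \Pi Z/(2\period)$, hence the desired lower bound. This same assumption also lets us replace $(1 + \period^2\epl^2 c/(4\pi^2))$ by the $\epl$-independent factor $(1 + \sigma^2 c/(4\log 4))$ throughout.

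With these ingredients, I would estimate $I_1$ by combining $1/Z^\epl \le 2\period/(\Pi Z)$ with the second/third estimate of \cref{lem:convergence_Z_psi}. The prefactor $4\Pi/\period$ from the lemma cancels the $2\period/(\Pi Z) = 2\period/(\Pi\sqrt{2\pi\sigma^2})$ from $1/Z^\epl$, producing the overall constant $8/\sqrt{2\pi\sigma^2}$, which combines with $C(n,\sigma^2)$ to give the $\pi^{-1/4}/\sqrt{(\sigma^2+1)\, 2^n n!}$ factor appearing in the three main terms of the statement. The three terms inside the bracket of \cref{lem:convergence_Z_psi} produce, respectively, the term with $n^{n/2}$, the term with $|\mu|^n$, and the term with $(2\pi/(\period\epl c))^n(1 + e^{n-1}/2)$. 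Term $I_2$ produces the last summand with constant $16\pi^{-1/4}$: using Cauchy--Schwarz gives $\bigl|\int \psi_n(x)e^{-(x-\mu)^2/(2\sigma^2)}\dd x\bigr| \le (\pi\sigma^2)^{1/4}$, which combined with $|\Pi Z/\period - Z^\epl|/(ZZ^\epl) \le 2\period |\Pi Z/\period - Z^\epl|/(\Pi Z^2)$ and the first estimate of \cref{lem:convergence_Z_psi} yields exactly $16\pi^{-1/4}e^{-\sigma^2(2\pi/(\period\epl))^2/2}$.

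The main obstacle is bookkeeping: ensuring that each of the three estimates produced from $I_1$ cleanly matches the corresponding term in the statement and that the constants absorb neatly. The two nontrivial simplifications needed are $n^{n/2}/\sqrt{n!} \le e^{n/2}$ (a direct consequence of Stirling's formula, yielding the $e^{n/2}$ in the first exponent) and a bound of the form $4(1 + e^{n-1}/2) \le 6 e^n$ (valid for all $n \ge 0$), which converts the $(1 + e^{n-1}/2)$ factor from \cref{lem:convergence_Z_psi} into the $e^n$ inside $(2\pi e/(\period\epl c))^n$ appearing in the third term. The Gaussian case $\sigma^2 = 1$ is treated identically, substituting the third estimate of \cref{lem:convergence_Z_psi} and using $c = 2$.
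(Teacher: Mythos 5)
Your decomposition, the lower bound $Z^\epl \ge \Pi Z/(2\period)$, the replacement of $(1 + \period^2\epl^2 c/(4\pi^2))$ by $(1 + \sigma^2 c/(4\log 4))$, and the two simplifications $n^{n/2}/\sqrt{n!} \le e^{n/2}$ and $1 + e^{n-1}/2 \le \tfrac32 e^n$ all match the paper's proof exactly; the bookkeeping for $I_1$ is correct and leads to precisely the three main terms of the statement.

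The one genuine discrepancy is in your treatment of $I_2$. You invoke Cauchy--Schwarz to get
\[
\left|\int_\R \psi_n(x)\, e^{-(x-\mu)^2/(2\sigma^2)}\,\dd x\right| \le \|\psi_n\|_{L^2}\,\left\| e^{-(x-\mu)^2/(2\sigma^2)}\right\|_{L^2} = (\pi\sigma^2)^{1/4},
\]
and claim the result is ``exactly $16\pi^{-1/4}e^{-\sigma^2(2\pi/(\period\epl))^2/2}$.'' Tracing the constants, your bound becomes
\[
|I_2| \le \frac{2\period}{\Pi Z^2}\,(\pi\sigma^2)^{1/4}\cdot\frac{4\Pi Z}{\period}\left(1 + \tfrac{\period^2\epl^2}{4\pi^2\sigma^2}\right) e^{-\frac{\sigma^2}{2}(2\pi/(\period\epl))^2} \le \frac{16\,\pi^{-1/4}}{\sqrt{2\sigma}}\, e^{-\frac{\sigma^2}{2}(2\pi/(\period\epl))^2},
\]
since $(\pi\sigma^2)^{1/4}/Z = \pi^{-1/4}/\sqrt{2\sigma}$. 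The extra factor $1/\sqrt{2\sigma}$ exceeds $1$ whenever $\sigma < 1/2$, so your estimate does not reproduce the stated constant $16\pi^{-1/4}$ uniformly in $\sigma$. The paper instead applies Cram\'er's inequality $|\psi_n(x)| \le \pi^{-1/4}$ together with the observation that $\tfrac{1}{Z} e^{-(x-\mu)^2/(2\sigma^2)}$ is a probability density, which gives $\bigl|\int \psi_n(x)\tfrac1Z e^{-(x-\mu)^2/(2\sigma^2)}\,\dd x\bigr| \le \pi^{-1/4}$ with no $\sigma$-dependence; this is what makes the constant $16\pi^{-1/4}$ work for all $\sigma>0$. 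Apart from this single step, your proposal and the paper coincide.
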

\begin{proof}
By definition of $\alpha_n^\epl$ and $\alpha_n$, we have
\begin{equation}
\abs{\alpha_n^\epl - \alpha_n} = \abs{\int_\R \psi_n(x) (\rho^\epl(x) - \rho(x)) \dd x} = \abs{\int_\R \psi_n(x) e^{- \frac{(x-\mu)^2}{2\sigma^2}} \left( \frac1{Z^\epl} e^{- \frac{1}{\sigma^2} p \left( \frac{x}\epl \right)} - \frac1{Z} \right) \dd x},
\end{equation}
which, due to the triangle inequality, implies
\begin{equation}
\abs{\alpha_n^\epl - \alpha_n} \le \frac1{Z^\epl} \abs{\int_\R \psi_n(x) e^{- \frac{(x-\mu)^2}{2\sigma^2}} \left( e^{- \frac{1}{\sigma^2} p \left( \frac{x}\epl \right)} - \frac\Pi{\period} \right) \dd x} + \frac1{Z^\epl} \abs{\frac{\Pi Z}\period - Z^\epl} \abs{\int_\R \psi_n(x) \frac1Z e^{- \frac{(x-\mu)^2}{2\sigma^2}} \dd x}.
\end{equation}
Notice that, by the first estimate in \cref{lem:convergence_Z_psi}, we have $Z^\epl \ge (Z\Pi)/(2\period)$ since for $\epl \le \frac{\pi\sqrt{\sigma^2}}{\period\sqrt{\log(4)}}$,
\begin{equation}
\abs{\frac{\Pi Z}{\period} - Z^\epl} \le \frac{4\Pi Z}{\period}\left(1 + \frac{\period^2\epl^2}{4\pi^2\sigma^2}\right)e^{-\frac{\sigma^2}{2}\left(\frac{2\pi}{\period\epl}\right)^2} \le \frac{4\Pi Z}{\period}\left(1 + \frac{1}{4\log(4)}\right) e^{-2\log(4)} \le \frac{\Pi Z}{2\period},
\end{equation}
where we recall that $Z = \sqrt{2\pi\sigma^2}$ here. Then by Cramér's inequality \cite{Ind61}, it holds for all $x \in \R$
\begin{equation} \label{eq:Cramer}
\abs{\psi_n(x)} \le \pi^{-1/4}.
\end{equation}
Therefore, we deduce
\begin{equation}
\abs{\alpha_n^\epl - \alpha_n} \le \frac{2\period}{\Pi Z} \abs{\int_\R \psi_n(x) e^{- \frac{(x-\mu)^2}{2\sigma^2}} \left( e^{- \frac{1}{\sigma^2} p \left( \frac{x}\epl \right)} - \frac\Pi{\period} \right) \dd x} + \frac{2\period \pi^{-\frac14}}{\Pi Z} \abs{\frac{\Pi Z}\period - Z^\epl}.
\end{equation}
Using \cref{lem:convergence_Z_psi}, we obtain that if $\sigma^2\ne 1$, then
\begin{equation}
\begin{split}
\abs{\alpha_n^\epl - \alpha_n}
&\le \frac{8 C(n,\sigma^2)e^{-\frac{2\pi^2}{\period^2\epl^2 c}}}{\sqrt{2\pi\sigma^2}}\left[\left(2^{n/2}n^{n/2}\abs{\frac{2}{c}-1}^{n/2} + 2^{n-1}\left(\frac{\abs{\mu}}{c\sigma^2}\right)^n\right)\left(1 + \frac{\period^2\epl^2 c}{4\pi^2}\right)\right.\\ 
&\hspace{10ex}\left.+ 2^{n-1}\left(\frac{2\pi}{\period\epl c}\right)^n\left(1 + \frac12 e^{n-1}\right)\right] + 8\pi^{-1/4}\left(1 + \frac{\period^2\epl^2}{4\pi^2\sigma^2}\right)e^{-\frac{\sigma^2}{2}\left(\frac{2\pi}{\period\epl}\right)^2}\\
&\le \frac{8e^{-\frac{2\pi^2}{\period^2\epl^2 c}}\pi^{-1/4}4^n\left(1 + \frac{n}{2}\right)}{\sqrt{(\sigma^2+1)2^n n!}}\left[\left(1 + \frac{\sigma^2c}{4\log(4)}\right)\left(2^{n/2}n^{n/2}\abs{\frac{2}{c}-1}^{n/2} + 2^{n-1}\left(\frac{\abs{\mu}}{\sigma^2+1}\right)^n\right)\right.\\ 
&\hspace{10ex}\left.+ 2^{n-1}\left(\frac{2\pi}{\period\epl c}\right)^n\left(1 + \frac12 e^{n-1}\right)\right] + 16\pi^{-1/4}e^{-\frac{\sigma^2}{2}\left(\frac{2\pi}{\period\epl}\right)^2},
\end{split}
\end{equation}
where we used that $e^{-\frac{\mu^2}{2\sigma^2} + \frac{\mu^2}{2c\sigma^4}} \le 1$ and $1 + e^{n-1}/2 \le 3 e^n /2$. If $n = 0$, then by convention, $n^{n/2} = 1$, so $\frac{n^{n/2}}{\sqrt{n!}} = 1$. Otherwise, when $n>0$, we have, by Stirling's approximation, $n! > \sqrt{2\pi n}\left(\frac{n}{e}\right)^n$ that $\frac{n^{n/2}}{\sqrt{n!}} \le e^{n/2}(2\pi n)^{-1/4} \le e^{n/2}$. In both cases, we can bound $\frac{n^{n/2}}{\sqrt{n!}} \le e^{n/2}$, so the desired result follows. The estimate for $\sigma = 1$ is obtained similarly from \cref{lem:convergence_Z_psi}.
\end{proof}

In the truncated sum $\rho^\epl_N$, we are neglecting the higher-order coefficients for $n \ge N$. This is justified by the following lemma, where we compute the rate of convergence to zero of the remainder $\sum_{n=N}^\infty \alpha_n^2$.

\begin{lemma} \label{lem:rate_alpha_n}
Let $\alpha_n$ with $n\in\N\cup\{0\}$ be the Fourier coefficients of the invariant density $\rho$ in equation \eqref{eq:invariant_density} with potential $V = (x-\mu)^2/2$ and $\mu \in \R$. If $N$ satisfies
\begin{equation}
N \ge \begin{cases}
\frac{32 \mu^2}{\abs{\sigma^4 - 1}} \left( \log \abs{\frac{\sigma^2 + 1}{\sigma^2 - 1}} \right)^{-2}, & \text{if } \sigma^2 \neq 1, \\
\frac{e^{3/2} \mu^2}2, & \text{if } \sigma^2 = 1,
\end{cases}
\end{equation}
then it holds
\begin{equation}
\left( \sum_{n=N}^\infty \alpha_n^2 \right)^{\frac12} \le \frac1{\pi^{1/4} \sqrt{(\sigma^2 + 1)(1 - e^{-2\lambda})}} e^{- \frac{\mu^2}{2(\sigma^2 + 1)} - \lambda N},
\end{equation}
where 
\begin{equation} \label{eq:lambda_def}
\lambda = \begin{cases}
\frac14 \log \abs{\frac{\sigma^2 + 1}{\sigma^2 - 1}}, & \text{if } \sigma^2 \neq 1, \\
\frac14, & \text{if } \sigma^2 = 1.
\end{cases}
\end{equation}
\end{lemma}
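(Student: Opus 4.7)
The plan is to derive a closed-form expression for $\alpha_n$ via \cref{lem:bound_Fourier_psi} specialized to $k=0$, reduce the claim to the termwise bound $|\widetilde H_n(0;\sigma^2)|^2 \le 2^n n!\, e^{-2\lambda n}$ for $n\ge N$, and then sum the resulting estimate as a geometric series. Applying \cref{lem:bound_Fourier_psi} with $k=0$ to $\rho(x) = (2\pi\sigma^2)^{-1/2} e^{-(x-\mu)^2/(2\sigma^2)}$, and simplifying the exponential prefactor via the identity $-\tfrac{1}{2\sigma^2} + \tfrac{1}{2c\sigma^4} = -\tfrac{1}{2(\sigma^2+1)}$, one obtains the clean formula
\begin{equation*}
\alpha_n = \frac{\pi^{-1/4}}{\sqrt{(\sigma^2+1)\, 2^n n!}}\, e^{-\mu^2/(2(\sigma^2+1))}\, \widetilde H_n(0;\sigma^2).
\end{equation*}
Squaring and summing over $n\ge N$ then shows that the lemma follows from the uniform pointwise bound on $|\widetilde H_n(0;\sigma^2)|^2$, since $\sum_{n\ge N}\alpha_n^2 \le \tfrac{e^{-\mu^2/(\sigma^2+1)}}{\sqrt{\pi}(\sigma^2+1)}\sum_{n\ge N} e^{-2\lambda n}$, which produces the constant $\tfrac{1}{\sqrt{\pi}(\sigma^2+1)(1-e^{-2\lambda})}$ after geometric summation and yields the claim upon taking square roots.

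In the case $\sigma^2 = 1$, the third branch of $\widetilde H_n$ gives $\widetilde H_n(0;1) = \mu^n$, and with $\lambda = 1/4$ the target inequality reduces to $\mu^{2n}/(2^n n!) \le e^{-n/2}$. The elementary estimate $n! \ge (n/e)^n$ gives $\mu^{2n}/(2^n n!) \le (e\mu^2/(2n))^n$, which is $\le e^{-n/2}$ as soon as $n \ge e^{3/2}\mu^2/2$, precisely matching the hypothesis on $N$. In the case $\sigma^2 \ne 1$, both remaining branches can be written as $|\widetilde H_n(0;\sigma^2)| = |1-2/c|^{n/2}\, |H_n(z_n)|$ with $|z_n| = |\mu|/\sqrt{|\sigma^4-1|} \eqdef y$, where $z_n$ is real for $\sigma^2 < 1$ and purely imaginary for $\sigma^2 > 1$. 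Since $|1-2/c| = |\sigma^2-1|/(\sigma^2+1) = e^{-4\lambda}$, this prefactor already contributes $e^{-4\lambda n}$ to the square, so it suffices to show $|H_n(z_n)|^2 \le 2^n n!\, e^{2\lambda n}$ for $n \ge N$. I would invoke \cref{lem:hermite-bound}, whose proof extends verbatim to complex arguments upon replacing $x$ by $|z|$, obtaining $|H_n(z_n)| \le 4^n(1+n/2)(2^{n/2}n^{n/2} + y^n)$, and combine with Stirling's inequality $n^n/n! \le e^n/\sqrt{2\pi n}$. The assumption $N \ge 32\mu^2/(|\sigma^4-1|(\log|(\sigma^2+1)/(\sigma^2-1)|)^2) = 2y^2/\lambda^2$ is exactly the threshold at which $y\sqrt{2n} \le \lambda n$ for every $n \ge N$, which is precisely what is needed to absorb the Hermite-polynomial growth into the target factor $e^{\lambda n}$.

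The main technical obstacle will be ensuring that the sub-exponential pre-factors arising from \cref{lem:hermite-bound} and Stirling, namely terms of the form $(1+n/2)^2$ and $\sqrt{n}$, can be absorbed without a residual polynomial-in-$n$ loss in the final geometric bound. A cleaner resolution is to bypass \cref{lem:hermite-bound} and use the generating identity $\sum_n |H_n(iy)|\, t^n/n! = e^{2yt + t^2}$ together with Cauchy's inequality $|H_n(z_n)| \le n!\, r^{-n}\, e^{2yr+r^2}$ evaluated at $r = \sqrt{n/2}$; after applying Stirling in the form $n!/n^n \le e\sqrt{n}/e^n$, this yields $|H_n(z_n)|^2/(2^n n!) \le C e^{2y\sqrt{2n}}$ up to lower-order factors, and the hypothesis on $N$ closes the remaining gap between $2y\sqrt{2n}$ and $2\lambda n$. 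Combining the two cases, summing the geometric series over $n \ge N$, and taking the square root then produces the stated inequality.
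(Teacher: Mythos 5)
Your overall decomposition matches the paper's: specialize \cref{lem:bound_Fourier_psi} to $k=0$ to get the clean formula for $\alpha_n$, reduce the claim to a termwise pointwise bound on $|\widetilde H_n(0;\sigma^2)|$, and sum a geometric series. Your treatment of the $\sigma^2=1$ case is exactly the paper's (Stirling via $n!\ge(n/e)^n$, with the threshold $n\ge e^{3/2}\mu^2/2$), and your identification of $|1-2/c|=e^{-4\lambda}$ and the argument modulus $y=|\mu|/\sqrt{|\sigma^4-1|}$ is correct.

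The gap is in the $\sigma^2\ne1$ case, and it is a real one, not just a cosmetic loose end. You correctly flag that \cref{lem:hermite-bound} would leave a polynomial prefactor $(1+n/2)^2$, but your proposed fix via Cauchy's estimate does not cure it. Starting from $|H_n(z_n)|\le n!\,r^{-n}e^{2yr+r^2}$ at $r=\sqrt{n/2}$ and applying Stirling yields
\begin{equation}
\frac{|H_n(z_n)|^2}{2^n n!} \le \frac{n!\,e^n}{n^n}\,e^{2y\sqrt{2n}} \asymp \sqrt{2\pi n}\,e^{2y\sqrt{2n}},
\end{equation}
so a residual factor of order $\sqrt{n}$ survives. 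The hypothesis $N\ge 2y^2/\lambda^2$ gives $2y\sqrt{2n}\le 2\lambda n$ with \emph{equality} at $n=N$ (and only a $\mathcal O(\lambda n)$ surplus for $n>N$, vanishing near $n=N$), so there is no exponential slack at the left endpoint of the sum to absorb $\sqrt{n}$. Consequently the termwise bound $\alpha_n^2\le C e^{-2\lambda n}$ with the lemma's constant $C=\tfrac{e^{-\mu^2/(\sigma^2+1)}}{\sqrt\pi(\sigma^2+1)}$ fails, and the final geometric sum picks up an uncontrolled $\sqrt{N}$ factor that is not present in the statement of the lemma. This is not merely a loss in the constant: the inequality as written would be false by a diverging multiplicative factor at the boundary $N=2y^2/\lambda^2$.

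The paper closes this gap by invoking the sharper classical estimate $|H_n(z)|\le\sqrt{2^n n!}\,e^{\sqrt{2n}|z|}$ for all $z\in\C$ (from \cite{EiM90}), which carries no polynomial prefactor and immediately gives $|H_n(z_n)|^2/(2^n n!)\le e^{2y\sqrt{2n}}\le e^{2\lambda n}$ for $n\ge N$. That bound is genuinely stronger than what Cauchy's inequality on the generating function combined with Stirling can produce, precisely because Stirling's $n!\sim\sqrt{2\pi n}(n/e)^n$ reintroduces the $\sqrt{n}$. You need to use this cited inequality (or derive an equivalent prefactor-free bound by a different mechanism, e.g. the Mehler kernel / Cramér-type arguments) rather than Cauchy plus Stirling.
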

\begin{proof}
Recalling that
\begin{equation}
\alpha_n = \frac1{\sqrt{2\pi\sigma^2}} \int_\R \psi_n(x) e^{- \frac{(x - \mu)^2}{2\sigma^2}} \dd x,
\end{equation}
and applying \cref{lem:bound_Fourier_psi} with $k=0$, we obtain 
\begin{equation}
\alpha_n = \begin{cases}
\frac{(-1)^n}{\pi^{1/4} \sqrt{(\sigma^2 + 1) 2^n n!}} e^{- \frac{\mu^2}{2(\sigma^2 + 1)}} \left( \frac{1 - \sigma^2}{1 + \sigma^2} \right)^{n/2} H_n \left( - \frac{\mu}{\sqrt{1 - \sigma^4}} \right) & \text{if } \sigma^2 < 1, \\
\frac1{\pi^{1/4} \sqrt{(\sigma^2 + 1) 2^n n!}} e^{- \frac{\mu^2}{2(\sigma^2 + 1)}} \mu^n  & \text{if } \sigma^2 = 1, \\
\frac{(-i)^n}{\pi^{1/4} \sqrt{(\sigma^2 + 1) 2^n n!}} e^{- \frac{\mu^2}{2(\sigma^2 + 1)}} \left( \frac{\sigma^2 - 1}{\sigma^2 + 1} \right)^{n/2} H_n \left( \frac{i\mu}{\sqrt{\sigma^4 - 1}} \right) & \text{if } \sigma^2 > 1.
\end{cases}
\end{equation}
Using the following bound from \cite{EiM90}
\begin{equation}
\abs{H_n(z)} \le \sqrt{2^n n!} e^{\sqrt{2n}\abs{z}} \quad \text{for all } n \in \N \text{ and } z \in \C,
\end{equation}
we get
\begin{equation}
\abs{\alpha_n} \le
\begin{cases}
\frac1{\pi^{1/4} \sqrt{\sigma^2 + 1}} e^{- \frac{\mu^2}{2(\sigma^2 + 1)}} \abs{\frac{\sigma^2 - 1}{\sigma^2 + 1}}^{n/2} e^{\frac{\sqrt{2n}\abs{\mu}}{\sqrt{\abs{\sigma^4 - 1}}}} & \text{if } \sigma^2 \neq 1, \\
\frac1{\pi^{1/4} \sqrt{\sigma^2 + 1}} e^{- \frac{\mu^2}{2(\sigma^2 + 1)}} \frac{|\mu|^n}{\sqrt{2^n n!}} & \text{if } \sigma^2 = 1.
\end{cases}
\end{equation}
Let us now distinguish the two cases. If $\sigma^2 \neq 1$, then we have
\begin{equation}
\abs{\alpha_n} \le \frac1{\pi^{1/4} \sqrt{\sigma^2 + 1}} e^{- \frac{\mu^2}{2(\sigma^2 + 1)}} \exp \left\{ -\frac{n}2 \log \abs{\frac{\sigma^2 + 1}{\sigma^2 - 1}} + \frac{\sqrt{2n}\abs{\mu}}{\sqrt{\abs{\sigma^4 - 1}}} \right\},
\end{equation}
which, due to the assumption on $N \le n$, implies
\begin{equation} \label{eq:bound_alpha_sigmaNOT1}
\abs{\alpha_n} \le \frac1{\pi^{1/4} \sqrt{\sigma^2 + 1}} e^{- \frac{\mu^2}{2(\sigma^2 + 1)}} \exp \left\{ -\frac{n}4 \log \abs{\frac{\sigma^2 + 1}{\sigma^2 - 1}} \right\}.
\end{equation}
On the other hand, if $\sigma^2 = 1$, using Stirling's approximation, we obtain
\begin{equation}
\abs{\alpha_n} \le \frac1{\pi^{1/4} \sqrt{\sigma^2 + 1}} e^{- \frac{\mu^2}{2(\sigma^2 + 1)}} \frac{|\mu|^n}{\sqrt{2^n n^n e^{-n} \sqrt{2\pi n}}} \le \frac1{\pi^{1/4} \sqrt{\sigma^2 + 1}} e^{- \frac{\mu^2}{2(\sigma^2 + 1)}} \left( \frac{e \mu^2}{2n} \right)^{n/2},
\end{equation}
which, due to the assumption on $N \le n$, implies
\begin{equation} \label{eq:bound_alpha_sigma1}
\abs{\alpha_n} \le \frac1{\pi^{1/4} \sqrt{\sigma^2 + 1}} e^{- \frac{\mu^2}{2(\sigma^2 + 1)}} \exp \left\{ - \frac{n}2 \log \left( \frac{2n}{e\mu^2} \right) \right\} \le \frac1{\pi^{1/4} \sqrt{\sigma^2 + 1}} e^{- \frac{\mu^2}{2(\sigma^2 + 1)}} \exp \left\{ - \frac{n}4 \right\}.
\end{equation}
Finally, from equations \eqref{eq:bound_alpha_sigmaNOT1} and \eqref{eq:bound_alpha_sigma1}, we deduce that
\begin{equation}
\sum_{n=N}^\infty \alpha_n^2 \le \frac1{\sqrt\pi (\sigma^2 + 1)} e^{- \frac{\mu^2}{\sigma^2 + 1}} \sum_{n=N}^\infty \left( e^{-2\lambda} \right)^n = \frac1{\sqrt\pi (\sigma^2 + 1) (1 - e^{-2\lambda})} e^{- \frac{\mu^2}{\sigma^2 + 1} - 2\lambda N},
\end{equation}
where $\lambda > 0$ is defined in equation \eqref{eq:lambda_def}, and which implies the desired result.
\end{proof}

By combining the above results, we can finally state and prove the main result of this section, which is the convergence of the approximation $\rho^\epl_N$ to the invariant measure $\rho$ in the Gaussian setting with quadratic potential.

\begin{proposition} \label{pro:Gaussian-Case}
Consider the potential $V(x) = (x - \mu)^2/2$. Let $\{\epl_k\}_k$ be such that $\epl_k\to0$ as $k\to\infty$ with $\epl_k \le \frac{\pi\sqrt{\sigma^2}}{\period\sqrt{\log(4)}}$ and small enough that $\left\lfloor \frac{\pi^2}{\gamma\period^2\epl_k^2}\right\rfloor \ge 2$, where $\gamma > 3+\log{8}$ if $\sigma^2 = 1$ and $\gamma > c^2 + \max\left\{16e^{3/2}, \frac{c}{4}\left(\log\abs{\frac{2}{c}-1}+2\log(4)\right)\right\}$ if $\sigma^2\ne 1$, where $c = \frac{\sigma^2+1}{\sigma^2}$. Let $N_k = N_k(\epl_k) \in \N$ be any sequence such that $N_k\to\infty$ as $k\to\infty$ and
\begin{equation} \label{eq:bound_N}
2 \le N_k \le \left\lfloor \frac{\pi^2}{\gamma\period^2\epl_k^2}\right\rfloor.
\end{equation}
Then $\rho^{\epl_k}_{N_k}\to\rho$ in $L^2(\R)$ as $k\to\infty$. Furthermore, there are constants $C, \lambda_1,\lambda_2 > 0$ independent of $k$ such that
\begin{equation}
\norm{\rho^{\epl_k}_{N_k} - \rho}_{L^2(\R)} \le Ce^{-\frac{\lambda_1}{\period^2\epl_k^2}} + Ce^{-\lambda_2 N_k}
\end{equation}
for $k$ sufficiently large.
\end{proposition}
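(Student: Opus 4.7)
The plan is to exploit the orthonormality of the Hermite basis $\{\psi_n\}$ in $L^2(\R)$ and decompose the error via Parseval's identity into a bulk term and a tail term:
\begin{equation}
\norm{\rho_{N_k}^{\epl_k} - \rho}_{L^2(\R)}^2 = \sum_{n=0}^{N_k-1} (\alpha_n^{\epl_k} - \alpha_n)^2 + \sum_{n=N_k}^\infty \alpha_n^2.
\end{equation}
This reduces the analysis to two scalar sums for which the preceding lemmas provide sharp termwise control.

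The tail $\sum_{n=N_k}^\infty \alpha_n^2$ is handled directly by \cref{lem:rate_alpha_n}: since $N_k\to\infty$, the lemma's lower-bound hypothesis on $N_k$ holds for all $k$ sufficiently large, producing a bound of the form $C^2 e^{-2\lambda N_k}$ with $\lambda > 0$ as in \eqref{eq:lambda_def}. Taking square roots yields the second exponential $Ce^{-\lambda_2 N_k}$ in the statement.

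For the bulk $\sum_{n=0}^{N_k-1}(\alpha_n^{\epl_k} - \alpha_n)^2$, I would invoke \cref{lem:bound_alpha} termwise. Its hypothesis $n < 2\pi^2/(\period^2\epl_k^2 c)$ is guaranteed for every $n \le N_k-1$ by the constraint \eqref{eq:bound_N} together with $\gamma > c^2 > c/2$, and its other smallness hypothesis $\epl_k \le \pi\sqrt{\sigma^2}/(\period\sqrt{\log 4})$ is assumed. Each summand in the bound of \cref{lem:bound_alpha} has the schematic form $C_j\, e^{-2\pi^2/(\period^2\epl_k^2 c)}\, e^{\kappa_j n}$, where $\kappa_j$ is an explicit constant absorbing factors such as $4^n$, $e^{n/2}$, $|2/c-1|^{n/2}$, $(2\pi e/(\period\epl_k c))^n$, and Stirling control of $1/\sqrt{n!}$ (the residual $\epl_k$-polynomial prefactors are negligible against the Gaussian exponential). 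Squaring and summing the resulting geometric series up to $n = N_k-1$, while using $N_k \le \pi^2/(\gamma\period^2\epl_k^2)$, produces
\begin{equation}
\sum_{n=0}^{N_k-1}(\alpha_n^{\epl_k} - \alpha_n)^2 \le C\exp\!\left\{-\frac{\pi^2}{\period^2\epl_k^2}\Bigl(\tfrac{4}{c} - \tfrac{2\kappa_{\max}}{\gamma}\Bigr)\right\} + Ce^{-\sigma^2(2\pi/(\period\epl_k))^2},
\end{equation}
where the last term captures the final $Z$-normalization contribution in \cref{lem:bound_alpha} and $\kappa_{\max}$ is the worst exponential rate encountered.

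The main obstacle is the bookkeeping needed to confirm that the stated threshold on $\gamma$ strictly dominates $\kappa_{\max}$, so that the exponent above is negative and produces a decay $e^{-\lambda_1/(\period^2\epl_k^2)}$ for some $\lambda_1 > 0$. Concretely, for $\sigma^2 = 1$ the relevant rate to beat comes from $(2\pi e/(\period\epl_k))^n/\sqrt{2^n n!}$ and the assumption $\gamma > 3 + \log 8$ is tailored to this; for $\sigma^2\neq 1$ one must simultaneously control the rate $16 e^{3/2}$ arising from $n^{n/2}/\sqrt{n!}$ via Stirling and the rate $(c/4)(\log|2/c-1|+2\log 4)$ arising from $|2/c-1|^{n/2} 4^n$, which, together with the $c^2$ offset that absorbs the hypothesis of \cref{lem:bound_alpha}, is precisely what the stated lower bound on $\gamma$ encodes. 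Once this verification is complete, combining the bulk and tail estimates yields $\norm{\rho_{N_k}^{\epl_k}-\rho}_{L^2(\R)} \le C e^{-\lambda_1/(\period^2\epl_k^2)} + C e^{-\lambda_2 N_k}$ as claimed, and letting $k \to \infty$ delivers the $L^2$ convergence.
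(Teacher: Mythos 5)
Your decomposition into a bulk term $\sum_{n=0}^{N_k-1}(\alpha_n^{\epl_k}-\alpha_n)^2$ and a tail $\sum_{n=N_k}^\infty\alpha_n^2$, handled by \cref{lem:bound_alpha} and \cref{lem:rate_alpha_n} respectively, is exactly the paper's route, and the tail part of your argument is fine. The gap is in your treatment of the bulk term, specifically the schematic $C_j\, e^{-2\pi^2/(\period^2\epl_k^2 c)}\, e^{\kappa_j n}$. That form with an $\epl$-independent rate $\kappa_j$ simply does not describe the summand coming from $\left(\tfrac{2\pi e}{\period\epl_k c}\right)^n/\sqrt{n!}$ in \cref{lem:bound_alpha}: the base of the power grows like $1/\epl_k$, so the bracketed quantity is not a geometric series in $n$ with a fixed ratio, and you cannot absorb it into a constant $\kappa_j$ without $\kappa_j\to\infty$ as $\epl_k\to 0$ (which makes the final displayed bound vacuous). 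Conversely, you cannot discard the $1/\sqrt{n!}$ via Stirling as $e^{n/2}$ alone, because the $n^{-n/2}$ factor is essential; without it, $\sum_{n=0}^{N_k-1}\left(\tfrac{2\pi e}{\period\epl_k c}\right)^{2n}$ grows like $e^{C\epl_k^{-2}\log(1/\epl_k)}$ and swamps the Gaussian prefactor $e^{-4\pi^2/(\period^2\epl_k^2 c)}$.

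The paper's proof handles exactly this term by a different mechanism: it recognizes $\sum_{n=0}^{N-1}\frac{1}{n!}\left(\tfrac{128\pi^2 e^2}{\period^2\epl^2 c^2}\right)^n$ as an (incomplete-gamma-normalized) truncated exponential series and, crucially, uses the constraint $N\le\lfloor\pi^2/(\gamma\period^2\epl^2)\rfloor$ to replace the $\epl$-dependent argument by $\tfrac{128e^2\gamma}{c^2}N$, after which the bound $\Gamma(N,AN)\le A^{N-1}N^{N-1}e^{-AN}/(1-A^{-1})$ for $A>1$ and a Stirling estimate for $N^{N-1}/(N-1)!$ produce a decay that can be compared against the Gaussian prefactor. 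This is where the hypothesis $\gamma>c^2+16e^{3/2}$ (respectively $\gamma>3+\log 8$ when $\sigma^2=1$) is actually used. Your proposal defers this step as ``bookkeeping'' and presents it as a geometric-series comparison, but it is not one, and filling the gap requires the incomplete-gamma (or an equivalent mode-of-a-Poisson-tail) argument rather than verifying that $\gamma$ beats a fixed $\kappa_{\max}$.
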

\begin{proof}
For simplicity, we will suppress the subscript $k$ in $\epl_k$ and $N_k$. By the triangle inequality and since $\psi_n$ forms an orthonormal basis, we have
\begin{equation}
\norm{\rho^\epl_N - \rho}_{L^2(\R)} \le \left( \sum_{n=0}^{N-1} (\alpha_n^\epl - \alpha_n)^2 \right)^{\frac12} + \left( \sum_{n=N}^\infty \alpha_n^2 \right)^{\frac12},
\end{equation}
where we note that the second term in the right-hand side vanishes as $N \to \infty$ by \cref{lem:rate_alpha_n}. We now consider the cases $\sigma^2\ne 1$ and $\sigma^2 = 1$ separately.

\textbf{Case $\sigma^2\ne 1$.} Using \cref{lem:bound_alpha}, we obtain
\begin{equation}
\begin{split}
\sum_{n=0}^{N-1} (\alpha_n^\epl - \alpha_n)^2 
&\le C\sum_{n=0}^{N-1}\left[\left(1+\frac{n}{2}\right)^2 e^{-\frac{4\pi^2}{\period^2\epl^2 c} + n + n\log\abs{\frac{2}{c}-1} + 2n\log(4)} + \frac{\left(1 + \frac{n}{2}\right)^2}{n!}\left(\frac{4\sqrt{2}\abs{\mu}}{\sigma^2+1}\right)^{2n}e^{-\frac{4\pi^2}{\period^2\epl^2 c}}\right.\\ 
&\hspace{10ex}\left.+ \frac{8^{2n}\left(1 + \frac{n}{2}\right)^2}{2^nn!}\left(\frac{2\pi e}{\period\epl c}\right)^{2n}e^{-\frac{4\pi^2}{\period^2\epl^2 c}} + e^{-\sigma^2\left(\frac{2\pi}{\period\epl}\right)^2}\right]\\
&\le CN^2e^{-\frac{4\pi^2}{\period^2\epl^2 c}}\sum_{n=0}^{N-1} e^{n+n\log\abs{\frac{2}{c}-1}+2n\log(4)} + CNe^{-\frac{4\pi^2\sigma^2}{\period^2\epl^2}}\\ 
&\hspace{10ex}+ CN^2 e^{-\frac{4\pi^2}{\period^2\epl^2 c}}\sum_{n=0}^{N-1} \frac{1}{n!}\left(\frac{4\sqrt{2}\abs{\mu}}{\sigma^2+1}\right)^{2n} + CN^2 e^{-\frac{4\pi^2}{\period^2\epl^2 c}} \sum_{n=0}^{N-1} \frac{1}{n!}\left(\frac{128\pi^2 e^2}{\period^2\epl^2 c^2}\right)^n,
\end{split}
\end{equation}
where $C>0$ is some constant. Note that the first sum satisfies
\begin{equation}
\sum_{n=0}^{N-1} e^{n+n\log\abs{\frac{2}{c}-1}+2n\log(4)} = \sum_{n=0}^{N-1} \left(e^{1 + \log\abs{\frac{2}{c}-1}+2\log(4)}\right)^n \le N\left(1 + e^{N(1+\log\abs{\frac{2}{c}-1}+2\log(4))}\right).
\end{equation}
For the second sum, we have
\begin{equation}
\sum_{n=0}^{N-1} \frac{1}{n!}\left(\frac{4\sqrt{2}\abs{\mu}}{\sigma^2+1}\right)^{2n} \le \exp\left(\frac{32\mu^2}{(\sigma^2+1)^2}\right).
\end{equation}
For the third sum, we have
\begin{equation}
\sum_{n=0}^{N-1} \frac{1}{n!}\left(\frac{128\pi^2 e^2}{\period^2\epl^2 c^2}\right)^n = \frac{1}{(N-1)!}e^{\frac{128\pi^2 e^2}{\period^2\epl^2 c^2}}\Gamma\left(N, \frac{128\pi^2 e^2}{\period^2\epl^2 c^2}\right) \le \frac{1}{(N-1)!} e^{\frac{128\pi^2 e^2}{\period^2\epl^2 c^2}}\Gamma\left(N, \frac{128 e^2\gamma}{c^2}N\right),
\end{equation}
where $\Gamma(N, x)$ is the upper incomplete gamma function \cite[Section 6.5]{AbS64}. Thus, defining $A = 128 e^2\gamma/c^2$ and using $1+z \le e^z$ for $z\in\R$, we have
\begin{equation}
\begin{split}
\Gamma\left(N, AN\right)
&= \int_{AN}^\infty t^{N-1}e^{-t}\dd t = \int_0^\infty (AN+u)^{N-1}e^{-AN-u}\dd u \le (AN)^{N-1}e^{-AN}\int_0^\infty e^{\frac{(N-1)u}{AN}}e^{-u}\dd u,
\end{split}
\end{equation}
which yields
\begin{equation}
\Gamma\left(N, \frac{128 e^2\gamma}{c^2}N\right) \le A^{N-1}N^{N-1}e^{-AN}\int_0^\infty e^{u(1/A-1)}\dd u = A^{N-1}N^{N-1}e^{-AN}\frac{1}{1-A^{-1}},
\end{equation}
since $A > 1$ by assumption.
This implies 
\begin{equation}
\begin{split}
\left( \sum_{n=0}^{N-1} (\alpha_n^\epl - \alpha_n)^2 \right)^{\frac12} 
&\le CN^{3/2}e^{-\frac{2\pi^2}{\period^2\epl^2 c}}\left(1 + e^{\frac{N}{2}(1 + \log\abs{\frac{2}{c}-1}+2\log(4))}\right) \\
&\quad + C\sqrt{N}e^{-\frac{2\pi^2\sigma^2}{\period^2\epl^2}} + CNe^{-\frac{2\pi^2}{\period^2\epl^2 c}+ \frac{32\mu^2}{(\sigma^2+1)^2}}\\ 
&\quad+ \frac{CN\cdot N^{\frac{N-1}{2}}}{\sqrt{1 - \frac{c^2}{128 e^2\gamma}}\sqrt{(N-1)!}}e^{-\frac{2\pi^2}{\period^2\epl^2c}+\frac{64\pi^2e^2}{\period^2\epl^2c^2}-\frac{64e^2\gamma}{c^2}N + \frac{N-1}{2}\log\left(\frac{128e^2\gamma}{c^2}\right)}.
\end{split}
\end{equation}
Then using Stirling's approximation, we have
\begin{equation}
\frac{N^{\frac{N-1}{2}}}{\sqrt{(N-1)!}} \le \frac{N^{\frac{N-1}{2}}}{\sqrt{2\pi(N-1)}}\frac{e^{\frac{N-1}{2}}}{(N-1)^{\frac{N-1}{2}}} \le \frac{1}{\sqrt{2\pi}}\left(1 + \frac{1}{N-1}\right)^{\frac{N-1}{2}}e^{\frac{N-1}{2}} \le \frac{1}{\sqrt{2\pi}}(2e)^{\frac{N-1}{2}},
\end{equation}
so that
\begin{equation}
\begin{split}
\left( \sum_{n=0}^{N-1} (\alpha_n^\epl - \alpha_n)^2 \right)^{\frac12} 
&\le CN^{3/2}e^{-\frac{2\pi^2}{\period^2\epl^2 c}}\left(1 + e^{\frac{N}{2}(1 + \log\abs{\frac{2}{c}-1}+2\log(4))}\right) \\
&\quad+ C\sqrt{N}e^{-\frac{2\pi^2\sigma^2}{\period^2\epl^2}} + CNe^{-\frac{2\pi^2}{\period^2\epl^2 c}+ \frac{32\mu^2}{(\sigma^2+1)^2}}\\ 
&\quad+ \frac{CN}{\sqrt{2\pi\left(1 - \frac{c^2}{128 e^2\gamma}\right)}}e^{-\frac{2\pi^2}{\period^2\epl^2c}+\frac{64\pi^2e^2}{\period^2\epl^2c^2}-\frac{64e^2\gamma}{c^2}N + \frac{N-1}{2}\log\left(\frac{128e^2\gamma}{c^2}\right) + \frac{N-1}{2}\left(1 + \log(2)\right)}\\
&\eqdef I + II + III + IV.
\end{split}
\end{equation}
We have that $II\to 0$ and $III\to 0$ as $\epl\to 0$ for any $\gamma>0$. For $I$, if $c$ is close enough to $2$, then $1+\log\abs{\frac{2}{c}-1}+2\log(4) \le 0$, and $I\to 0$ as $\epl\to 0$. Otherwise, $1+\log\abs{\frac{2}{c}-1}+2\log(4)>0$, but we then have
\begin{equation}
\frac12\left\lfloor \frac{\pi^2}{\gamma \period^2\epl^2}\right\rfloor \left(1 + \log\abs{\frac{2}{c}-1}+2\log(4)\right) -\frac{2\pi^2}{\period^2\epl^2 c} \le -\frac{\pi^2}{\period^2\epl^2}\left(\frac{2}{c} - \frac{1}{2\gamma}\left(1 + \log\abs{\frac{2}{c}-1}+2\log(4)\right)\right),
\end{equation}
so, since $\gamma > \frac{c}{4}\left(1 + \log\abs{\frac{2}{c}-1}+2\log(4)\right)$ holds by assumption, we have that $I\to0$ as $\epl\to0$. For $IV$, since $\gamma > \frac{c^2}{128e^2}$, we have that
\begin{equation}
\begin{split}
&\exp\left(-\frac{2\pi^2}{\period^2\epl^2 c} + \frac{64\pi^2e^2}{\period^2\epl^2 c^2} - \frac{64e^2\gamma}{c^2}\left\lfloor \frac{\pi^2}{\gamma \period^2\epl^2}\right\rfloor\right. \\
&\hspace{20ex}\left.+ \frac{1}{2}\left(\left\lfloor \frac{\pi^2}{\gamma \period^2\epl^2}\right\rfloor-1\right)\log\left(\frac{128e^2\gamma}{c^2}\right) + \frac{1}{2}\left(\left\lfloor \frac{\pi^2}{\gamma \period^2\epl^2}\right\rfloor-1\right)(1+\log(2))\right)\\
&\qquad \le C\exp\left(-\frac{\pi^2}{\period^2\epl^2}\left[\frac{2}{c} - \frac{1}{2\gamma}\log\left(\frac{128e^2\gamma}{c^2}\right) - \frac{1}{2\gamma}(1+\log(2))\right]\right),
\end{split}
\end{equation}
so we see that $IV\to0$ when $\epl\to 0$ as long as $\gamma > \frac{c}{4}\left(\log\left(\frac{128e^2\gamma}{c^2}\right) + 1 + \log(2)\right)$. To obtain a lower bound on $\gamma$ independent of $\gamma$, we claim that $\frac{c}{4}\log(\gamma) \le \frac{\gamma}{2}$ for $\gamma\ge c^2$. Indeed, if $\gamma=Bc^2$ for any $B\ge 1$, we have using $\log(x) \le x$ that
\begin{equation}
\frac{\gamma}{2} - \frac{c}{4}\log(\gamma) = \frac{B^2c^2}{2}-\frac{c}{2}\log(Bc) \ge \frac{B^2c^2}{2}-\frac{Bc^2}{2} \ge 0
\end{equation}
since $B\ge 1$. As this holds for any $B\ge 1$, the claim holds. It follows that when $\gamma \ge c^2$, 
\begin{equation}
\frac{c}{4}\left(\log\left(\frac{128e^2\gamma}{c^2}\right) + 1 + \log(2)\right) \le \frac{\gamma}{2} + \frac{c}{2}\log\left(\frac{16e^{3/2}}{c}\right) \le \frac{\gamma}{2}+8e^{3/2},
\end{equation}
so the condition that $\gamma > c^2 + 16e^{3/2}$ will imply $\gamma > \frac{c}{4}\left(\log\left(\frac{128e^2\gamma}{c^2}\right) + 1 + \log(2)\right)$.
Since this holds by assumption, we obtain
\begin{equation}
\lim_{\epl \to 0} \norm{\rho^\epl_N - \rho}_{L^2(\R)} = 0.
\end{equation}
\textbf{Case $\sigma^2=1$.} Using \cref{lem:bound_alpha}, we have for $N \le \frac{\pi^2}{\period^2\epl^2}$
\begin{equation}
\begin{split}
\sum_{n=0}^{N-1} (\alpha_n^\epl - \alpha_n)^2 
&\le C\sum_{n=0}^{N-1} \left[\frac{\abs{\mu}^{2n}2^{2n}}{2^{n} n!} e^{-\frac{\mu^2}{2}-\frac{2\pi^2}{\period^2\epl^2}} + \frac{2^{2n}}{2^{n} n!} e^{-\frac{\mu^2}{2}-\frac{2\pi^2}{\period^2\epl^2}}\left(\frac{2\pi e}{\period\epl}\right)^{2n} + e^{-\left(\frac{2\pi}{\period\epl}\right)^2}\right]\\
&= CNe^{-\frac{4\pi^2}{\period^2\epl^2}} + Ce^{-\frac{\mu^2}{2}-\frac{2\pi^2}{\period^2\epl^2}}\sum_{n=0}^{N-1} \frac{1}{n!}\left[(2\mu^2)^n + \left(\frac{8\pi^2 e^2}{\period^2\epl^2}\right)^{n}\right].
\end{split}
\end{equation}
Now we have as before
\begin{equation}
\sum_{n=0}^{N-1} \frac{1}{n!}\left(\frac{8\pi^2 e^2}{\period^2\epl^2}\right)^{n} = \frac{1}{(N-1)!}e^{\frac{8\pi^2 e^2}{\period^2\epl^2}}\Gamma\left(N, \frac{8\pi^2 e^2}{\period^2\epl^2}\right) \le \frac{1}{(N-1)!}e^{\frac{8\pi^2 e^2}{\period^2\epl^2}}\Gamma\left(N, 8e^2\gamma N\right).
\end{equation}
Thus, defining $A = 8e^2\gamma$, we again have
\begin{equation}
\Gamma\left(N, 8e^2\gamma N\right) = \Gamma(N, AN) \le A^{N-1}N^{N-1}e^{-AN}\frac{1}{1-A^{-1}},
\end{equation}
since $A > 1$ by assumption. Then we also have
\begin{equation}
\sum_{n=0}^{N-1} \frac{(2\mu^2)^n}{n!} \le e^{2\mu^2},
\end{equation}
and it follows that
\begin{equation}
\begin{split}
\sum_{n=0}^{N-1} (\alpha_n^\epl - \alpha_n)^2
&\le CNe^{-\frac{4\pi^2}{\period^2\epl^2}} + Ce^{\frac{3}{2}\mu^2 - \frac{2\pi^2}{\period^2\epl^2}} + Ce^{-\frac{\mu^2}{2}-\frac{2\pi^2}{\period^2\epl^2}} \frac{1}{(N-1)!}e^{\frac{8\pi^2 e^2}{\period^2\epl^2}} A^{N-1}N^{N-1}e^{-AN}\frac{1}{1-A^{-1}}\\
&\le CNe^{-\frac{4\pi^2}{\period^2\epl^2}} + Ce^{-\frac{2\pi^2}{\period^2\epl^2}} + Ce^{-\frac{2\pi^2}{\period^2\epl^2}}\frac{N^{N-1}}{(N-1)!} A^{N}.
\end{split}
\end{equation}
Now using Stirling's approximation $n! > \sqrt{2\pi n}(n/e)^n$, we have
\begin{equation}
\frac{N^{N-1}}{(N-1)!} \le \frac{N^{N-1}e^{N-1}}{\sqrt{2\pi(N-1)}(N-1)^{N-1}} = \frac{\left(1 + \frac{1}{N-1}\right)^{N-1}e^{N-1}}{\sqrt{2\pi(N-1)}} \le \frac{e^{N}}{\sqrt{2\pi(N-1)}},
\end{equation}
where in the last inequality, we used $1+z \le e^z$ for $z\in\R$. Thus, we have
\begin{equation}
\begin{split}
\left(\sum_{n=0}^{N-1} (\alpha_n^\epl - \alpha_n)^2\right)^{1/2} 
&\le C\sqrt{N}e^{-\frac{2\pi^2}{\period^2\epl^2}} + Ce^{-\frac{\pi^2}{\period^2\epl^2}} + Ce^{-\frac{\pi^2}{\period^2\epl^2}}(8e^3\gamma)^{N/2}\\
&\le \frac{C}{\period\epl}e^{-\frac{2\pi^2}{\period^2\epl^2}} + Ce^{-\frac{\pi^2}{\period^2\epl^2}} + Ce^{-\frac{\pi^2}{\period^2\epl^2}+\frac{\pi^2}{2\gamma \period^2\epl^2}\log(8e^3\gamma)},
\end{split}
\end{equation}
so because we have
\begin{equation}
\frac{\log(8e^3\gamma)}{2\gamma} = \frac{3}{2\gamma} + \frac{\log{8}}{2\gamma} + \frac{\log{\gamma}}{2\gamma} \le \frac{3}{2\gamma} + \frac{\log{8}}{2\gamma} + \frac12 < 1,
\end{equation}
as $\gamma > 3+\log{8}$, it follows that
\begin{equation}
\lim_{\epl \to 0} \norm{\rho^\epl_N - \rho}_{L^2(\R)} = 0,
\end{equation}
which gives the desired result. 

Next, to obtain the rate, recall that
\begin{equation}
\norm{\rho^\epl_N - \rho}_{L^2(\R)} \le \left( \sum_{n=0}^{N-1} (\alpha_n^\epl - \alpha_n)^2 \right)^{\frac12} + \left( \sum_{n=N}^\infty \alpha_n^2 \right)^{\frac12}.
\end{equation}
The second term is bounded by $Ce^{-\lambda_2N}$ for some $C,\lambda_2>0$ by \cref{lem:rate_alpha_n}. Then in each case $\sigma^2 = 1$ and $\sigma^2\ne 1$ above, using $N \le \frac{C}{\period^2\epl^2}$, we have that
\begin{equation}
\left( \sum_{n=0}^{N-1} (\alpha_n^\epl - \alpha_n)^2 \right)^{\frac12} \le Ce^{-\frac{\lambda_3}{\period^2\epl^2} + \lambda_4\log\left(\frac{1}{\period\epl}\right)},
\end{equation}
for some $C, \lambda_3,\lambda_4 > 0$. Then using $\log(x) \le \frac{\lambda_3}{2\lambda_4}x^2$ for $x$ sufficiently large, we have
\begin{equation}
\left( \sum_{n=0}^{N-1} (\alpha_n^\epl - \alpha_n)^2 \right)^{\frac12} \le Ce^{-\frac{\lambda_3}{2\period^2\epl^2}},
\end{equation}
for $\epl$ sufficiently small. Taking $\lambda_1 = \lambda_3/2$ and combining our estimates yields the desired result.
\end{proof}

\subsubsection{Extension to a general potential}

In this section, we consider the general case where the potential $V$ does not have a specific form. The main idea is to approximate the invariant measure $\rho$ using a mixture of Gaussians, and then apply the convergence result established in the previous section. Before presenting the main theorem, we also need to approximate the multiscale invariant measure $\rho^\epl$. The following result provides a way to do so and follows as a consequence of \cref{pro:Gaussian-Case}.

\begin{corollary} \label{cor:Gaussian-mixture}
Let $M\in\N$ and set $\rho^\epl = G^\epl$ and $\rho = G$, where
\begin{equation}
\begin{aligned}
G(x) &= \sum_{m=1}^M \theta_m \frac{1}{\sqrt{2\pi\sigma_m^2}}e^{-\frac{(x-\mu_m)^2}{2\sigma_m^2}}\quad\text{with}\quad \sum_{m=1}^M\theta_m = 1, \\
G^\epl(x) &= \sum_{m=1}^M \theta_m \frac{1}{\widetilde{Z}_m^\epl} e^{-\frac{(x-\mu_m)^2}{2\sigma_m^2}-\frac{1}{\sigma^2}p(\frac{x}{\epl})}\quad\text{with}\quad \widetilde{Z}_m^\epl = \int_\R e^{-\frac{(x-\mu_m)^2}{2\sigma_m^2}-\frac{1}{\sigma^2}p(\frac{x}{\epl})}\dd x.
\end{aligned}
\end{equation}
Finally, let $G_{N}^\epl$ be the projection of $G^\epl$ onto the span of the first $N$ Hermite functions. Then, under the hypotheses of \cref{pro:Gaussian-Case}, we have that $G_{N_k}^{\epl_k} \to G$ in $L^2(\R)$ as $k\to\infty$. Furthermore, there are constants $C,\lambda_1^{(m)},\lambda_2^{(m)}>0$ with $m=1,2,\dots,M$ such that
\begin{equation}
\norm{G_{N_k}^{\epl_k} - G}_{L^2(\R)} \le C\max_{m\in\{1,2,\dots,M\}} \left(e^{-\frac{\lambda_1^{(m)}}{\period^2\epl_k^2}} + e^{-\lambda_2^{(m)}N_k}\right).
\end{equation}
\end{corollary}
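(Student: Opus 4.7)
The plan is to exploit the linearity of the orthogonal projection onto the span of the first $N$ Hermite functions, together with the triangle inequality, in order to reduce the mixture claim to a componentwise application of \cref{pro:Gaussian-Case}. Write the $m$-th components
\begin{equation*}
\rho_m(x) = \frac{1}{\sqrt{2\pi\sigma_m^2}}e^{-\frac{(x-\mu_m)^2}{2\sigma_m^2}}, \qquad \widetilde{\rho}_m^\epl(x) = \frac{1}{\widetilde{Z}_m^\epl}e^{-\frac{(x-\mu_m)^2}{2\sigma_m^2} - \frac{1}{\sigma^2}p(x/\epl)},
\end{equation*}
and let $\widetilde{G}_{m,N}^\epl$ denote the projection of $\widetilde{\rho}_m^\epl$ onto $\mathrm{span}\{\psi_0,\dots,\psi_{N-1}\}$. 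Since the projection is linear, $G_N^\epl = \sum_m \theta_m \widetilde{G}_{m,N}^\epl$, and $\sum_m \theta_m = 1$ together with the triangle inequality yields
\begin{equation*}
\norm{G_N^\epl - G}_{L^2(\R)} \le \sum_{m=1}^M \theta_m\,\norm{\widetilde{G}_{m,N}^\epl - \rho_m}_{L^2(\R)}.
\end{equation*}

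I would then apply \cref{pro:Gaussian-Case} to each summand. The only wrinkle is that \cref{pro:Gaussian-Case} is stated for the canonical quadratic $V(x)=(x-\mu)^2/2$ with diffusion $\sigma$, whose homogenized density has variance $\sigma^2$, whereas the $m$-th mixture component has variance $\sigma_m^2$ that may differ from $\sigma^2$. However, inspection of \cref{lem:bound_Fourier_psi,lem:convergence_Z_psi,lem:bound_alpha,lem:rate_alpha_n} shows that their proofs use only the Gaussian/quadratic structure of the slow factor and never require the variance of that Gaussian to match the $\sigma^2$ appearing in the fast-scale term $e^{-p(x/\epl)/\sigma^2}$. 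Replacing the Gaussian variance $\sigma^2$ by $\sigma_m^2$ throughout (while keeping the fast-scale normalization unchanged) gives the analogous estimates, with constants now depending on both $\sigma_m$ and $\sigma$. Propagating these through the argument of \cref{pro:Gaussian-Case} yields, for each $m$,
\begin{equation*}
\norm{\widetilde{G}_{m,N_k}^{\epl_k} - \rho_m}_{L^2(\R)} \le C_m\bigl(e^{-\lambda_1^{(m)}/(\period^2\epl_k^2)} + e^{-\lambda_2^{(m)}N_k}\bigr),
\end{equation*}
with positive constants $C_m,\lambda_1^{(m)},\lambda_2^{(m)}$ independent of $k$.

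Summing the $M$ componentwise estimates, bounding each by the maximum, and absorbing the finite collection of prefactors $\theta_m C_m$ into a single constant $C$ then gives the claimed bound, and $G_{N_k}^{\epl_k}\to G$ in $L^2(\R)$ follows from $\epl_k\to 0$ and $N_k\to\infty$. The main obstacle, such as it is, is ensuring that the hypotheses of \cref{pro:Gaussian-Case} hold uniformly in $m$: the lower bound on $\gamma$ now depends on $c_m = (\sigma_m^2+1)/\sigma_m^2$ rather than a single $c$, so one must take $\gamma$ larger than the worst-case lower bound over $m\in\{1,\dots,M\}$, and similarly the thresholds on $\epl_k$ and $N_k$ coming from \cref{lem:rate_alpha_n} must be met for every $m$ simultaneously; since $M$ is finite and independent of $k$, this is harmless and does not alter the asymptotic rate.
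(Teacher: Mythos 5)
Your proposal is correct and follows essentially the same route as the paper's proof: reduce the mixture to its individual Gaussian components via linearity of the projection and the triangle inequality, then apply \cref{pro:Gaussian-Case} componentwise. The paper handles the $\sigma_m^2$-versus-$\sigma^2$ mismatch with a one-line reparametrization — it replaces $p$ by $\widetilde p_m(y) = (\sigma_m^2/\sigma^2)\,p(y)$ so that $e^{-p(x/\epl)/\sigma^2} = e^{-\widetilde p_m(x/\epl)/\sigma_m^2}$, which makes your ``by inspection'' argument precise without re-tracing the lemmas — and your closing observation, that the $\gamma$-threshold and the smallness/largeness conditions from \cref{pro:Gaussian-Case} must be met for each $c_m = (\sigma_m^2+1)/\sigma_m^2$ simultaneously (harmless for finite $M$), is a genuine subtlety the paper leaves implicit.
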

\begin{proof}
We will drop the subscript $k$ in $\epl_k$ and $N_k$. As in the proof of \cref{pro:Gaussian-Case}, by the triangle inequality and since $\psi_n$ forms an orthonormal basis, we have
\begin{equation} \label{eq:Gaussian-mixture-triangle}
\norm{G^\epl_N - G}_{L^2(\R)} \le \left( \sum_{n=0}^{N-1} (\alpha_n^\epl - \alpha_n)^2 \right)^{\frac12} + \left( \sum_{n=N}^\infty \alpha_n^2 \right)^{\frac12},
\end{equation}
where
\begin{equation}
\alpha_n^\epl = \int_\R G_N^\epl(x)\psi_n(x)\dd x,\quad\text{and}\quad \alpha_n = \int_\R G(x)\psi_n(x)\dd x.
\end{equation}
For the second sum, we have by the triangle inequality in $\ell^2$ that
\begin{equation}
\begin{split}
\left( \sum_{n=N}^\infty \alpha_n^2 \right)^{\frac12} 
&= \left(\sum_{n=N}^\infty \left(\sum_{m=1}^M \theta_m \int_\R \frac{1}{\sqrt{2\pi\sigma_m^2}}e^{-\frac{(x-\mu_m)^2}{2\sigma_m^2}}\psi_n(x)\dd x\right)^2\right)^{1/2}\\ 
&\le \sum_{m=1}^M \theta_m \left(\sum_{n=N}^\infty\left(\int_\R \frac{1}{\sqrt{2\pi\sigma_m^2}}e^{-\frac{(x-\mu_m)^2}{2\sigma_m^2}} \psi_n(x)\dd x\right)^2\right)^{1/2}.
\end{split}
\end{equation}
Thus, we have reduced the sum to the Gaussian base case and can apply \cref{lem:rate_alpha_n} to find that
\begin{equation}
\left( \sum_{n=N}^\infty \alpha_n^2 \right)^{\frac12} \le C\sum_{m=1}^M \theta_m e^{-\lambda_2^{(m)}N}
\end{equation}
for some $C,\lambda_2^{(m)}>0$ for $m=1,2,\dots,M$. 

Then for the first sum in \eqref{eq:Gaussian-mixture-triangle}, we note that
\begin{equation}
\begin{split}
\abs{\alpha_n^\epl - \alpha_n} &= \abs{\int_\R \psi_n(x) (G^\epl_{N}(x) - G(x)) \dd x} \\
&= \abs{\int_\R \psi_n(x) \left( \sum_{m=1}^M \theta_m \frac1{\widetilde Z^\epl_m} e^{-\frac{(x - \mu_m)^2}{2\sigma_m^2} - \frac{1}{\sigma^2} p \left( \frac{x}\epl \right)} - \sum_{m=1}^M \theta_m \frac1{\sqrt{2\pi \sigma_m^2}} e^{-\frac{(x - \mu_m)^2}{2\sigma_m^2}} \right)} \\
&\le \sum_{m=1}^M \theta_m \abs{\int_\R \psi_n(x) \left( \frac1{\widetilde Z^\epl_m} e^{-\frac{(x - \mu_m)^2}{2\sigma_m^2} - \frac{1}{\sigma_m^2} \widetilde{p}_m \left( \frac{x}\epl \right)} - \frac1{\sqrt{2\pi \sigma_m^2}} e^{-\frac{(x - \mu_m)^2}{2\sigma_m^2}} \right)},
\end{split}
\end{equation}
where $\widetilde{p}_m(y) = \frac{\sigma_m^2}{\sigma^2}p(y)$,
which reduces the problem to the Gaussian base case. In particular, by the triangle inequality and from the proof of \cref{pro:Gaussian-Case}, we see that there are $C,\lambda_1^{(m)}>0$ for $m=1,2,\dots,M$ such that
\begin{equation}
\left(\sum_{n=0}^{N-1} (\alpha_n^\epl - \alpha_n)^2\right)^{1/2} \le C\sum_{m=1}^M \theta_m e^{-\frac{\lambda_1^{(m)}}{\period^2\epl^2}}.
\end{equation}
Combining our estimates yields
\begin{equation}
\norm{G_{N_k}^{\epl_k} - G}_{L^2(\R)} \le C\sum_{m=1}^M \theta_m \left(e^{-\frac{\lambda_1^{(m)}}{\period^2\epl_k^2}} + e^{-\lambda_2^{(m)}N_k}\right) \le C\max_{m\in\{1,2,\dots,M\}} \left(e^{-\frac{\lambda_1^{(m)}}{\period^2\epl_k^2}} + e^{-\lambda_2^{(m)}N_k}\right),
\end{equation}
as desired.
\end{proof}

Using the Gaussian mixture approximation theory, in the next theorem we prove the convergence of $\rho^\epl_N$ to $\rho$ as $\epl \to 0$ and $N = N(\epl) \to \infty$.

\begin{proposition} \label{pro:General-Case}
Let $\{\epl_k\}_k$ and $N_k$ satisfy the assumptions of \cref{pro:Gaussian-Case}. Then 
\begin{equation}
\lim_{k\to\infty} \norm{\rho_{N_k}^{\epl_k} - \rho}_{L^2(\R)} = 0.
\end{equation}
\end{proposition}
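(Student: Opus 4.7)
The plan is to approximate the general invariant density $\rho$ in $L^2(\R)$ by a finite Gaussian mixture and then reduce to \cref{cor:Gaussian-mixture}. Fix $\eta > 0$. Under \cref{as:potentials} the potential $V$ is confining, so $\rho$ has exponential tails and lies in $L^1(\R) \cap L^2(\R)$. Consequently, one can approximate $\rho$ in $L^2$ by a finite convex combination of Gaussian densities; for instance, by convolving $\rho$ with a narrow Gaussian kernel, discretizing the convolution integral by a Riemann sum, and renormalizing the weights to sum to one. This yields $M \in \N$, weights $\theta_m > 0$ with $\sum_{m=1}^M\theta_m = 1$, means $\mu_m \in \R$, and variances $\sigma_m^2 > 0$ such that the Gaussian mixture $G(x) = \sum_{m=1}^M \theta_m (2\pi\sigma_m^2)^{-1/2}\exp(-(x-\mu_m)^2/(2\sigma_m^2))$ satisfies $\|\rho - G\|_{L^2(\R)} < \eta$. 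I then introduce $G^\epl$ and $G_N^\epl$ exactly as in \cref{cor:Gaussian-mixture}.

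The key technical step is to bound $\|\rho^\epl - G^\epl\|_{L^2(\R)}$ uniformly in $\epl$. Setting $H(x) = e^{-V(x)/\sigma^2}$, $\Phi_m(x) = e^{-(x-\mu_m)^2/(2\sigma_m^2)}$, and $K = e^{\|p\|_\infty/\sigma^2}$ (finite since $p$ is periodic and $C^2$), I factor $\rho^\epl(x) - G^\epl(x) = e^{-p(x/\epl)/\sigma^2}\bigl[H(x)/Z^\epl - \sum_{m=1}^M \theta_m \Phi_m(x)/\widetilde Z_m^\epl\bigr]$, so that the prefactor is $L^\infty$-bounded by $K$. Applying the standard oscillatory-integral lemma for periodic weights (a periodic function $g(\cdot/\epl)$ converges weakly to its mean against any $L^1$ test function) to the $L^1$ functions $H$ and $\Phi_m$ yields $Z^\epl \to Z\Pi/\period$ and $\widetilde Z_m^\epl \to \sqrt{2\pi\sigma_m^2}\,\Pi/\period$ as $\epl \to 0$. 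Adding and subtracting these limit normalizations, the bracketed expression decomposes as $(\period/\Pi)(\rho - G)$ plus residual terms of the form $(1/Z^\epl - \period/(Z\Pi))H$ and $\theta_m(\period/(\sqrt{2\pi\sigma_m^2}\Pi) - 1/\widetilde Z_m^\epl)\Phi_m$, whose $L^2$ norms vanish as $\epl \to 0$ since $H, \Phi_m \in L^2(\R)$. Consequently $\limsup_{k \to \infty} \|\rho^{\epl_k} - G^{\epl_k}\|_{L^2(\R)} \leq (K\period/\Pi)\,\eta$.

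Finally, I combine the ingredients via the triangle inequality $\|\rho_{N_k}^{\epl_k} - \rho\|_{L^2(\R)} \leq \|\rho_{N_k}^{\epl_k} - G_{N_k}^{\epl_k}\|_{L^2(\R)} + \|G_{N_k}^{\epl_k} - G\|_{L^2(\R)} + \|G - \rho\|_{L^2(\R)}$. The first summand is dominated by $\|\rho^{\epl_k} - G^{\epl_k}\|_{L^2(\R)}$, since orthogonal projection onto $\mathrm{span}\{\psi_0,\ldots,\psi_{N_k-1}\}$ is norm-decreasing; the second vanishes as $k \to \infty$ by \cref{cor:Gaussian-mixture} applied to the fixed mixture $G$; and the third is at most $\eta$ by construction. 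Hence $\limsup_{k\to\infty} \|\rho_{N_k}^{\epl_k} - \rho\|_{L^2(\R)} \leq \eta(1 + K\period/\Pi)$, and letting $\eta \to 0$ concludes the proof. The main obstacle is the uniform-in-$\epl$ bound in the middle step: a quantitative convergence rate for $Z^\epl$ under a general confining $V$ is not available a priori, but only qualitative convergence of the normalizations is needed here thanks to the $L^\infty$ bound on the fast-scale factor, which is what makes the reduction to \cref{cor:Gaussian-mixture} clean.
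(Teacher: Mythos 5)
Your proof is correct and follows essentially the same route as the paper: approximate $\rho$ by a finite Gaussian mixture $G$ in $L^2$, show that $\limsup_{k}\|\rho^{\epl_k}-G^{\epl_k}\|_{L^2(\R)}$ is controlled by a constant multiple of $\|\rho-G\|_{L^2(\R)}$, and conclude by the triangle inequality together with the contractivity of the orthogonal projection and \cref{cor:Gaussian-mixture}. The one place where you diverge is the intermediate estimate on $\|\rho^\epl - G^\epl\|_{L^2(\R)}$: the paper applies Jensen's inequality to each term and tracks the second moment $\Pi_2 = \int_0^\period e^{-2p(y)/\sigma^2}\,\d y$, arriving at the sharper constant $\sqrt{2\Pi_2\period}/\Pi$, whereas you simply pull the bounded fast-scale factor out in $L^\infty$, $e^{-p(\cdot/\epl)/\sigma^2} \le K = e^{\|p\|_\infty/\sigma^2}$, and obtain the cruder constant $K\period/\Pi$. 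Both suffice here because only the vanishing of the $\limsup$ as the mixture error $\eta\to 0$ is needed, so your cruder bound trades a slightly larger constant for a shorter computation. You also replace the paper's citation of the $L^2$-density of finite Gaussian mixtures (\cite{NNC20}) with a sketch of a mollification-plus-Riemann-sum construction; that sketch needs a truncation step to make the Riemann sum honest (the convolution is an integral over all of $\R$), but it is a standard argument and the paper's citation could equally well be used, so this is a cosmetic difference rather than a gap.
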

\begin{proof}
Due to \cite[Theorem 5(c)]{NNC20}, for all $\delta > 0$ there exists a function $G_\delta$, which is a mixture of $M = M(\delta)$ Gaussians, of the form
\begin{equation}
G_\delta(x) = \sum_{m=1}^M \theta_m \frac1{\sqrt{2\pi \sigma_m^2}} e^{-\frac{(x - \mu_m)^2}{2\sigma_m^2}} \qquad \text{with} \qquad \sum_{m=1}^M \theta_m = 1,
\end{equation}
such that
\begin{equation}
\norm{\rho - G_\delta}_{L^2(\R)} < \delta.
\end{equation}
Let us now define the corresponding multiscale function $G_\delta^\epl$ as
\begin{equation}
G_\delta^\epl(x) = \sum_{m=1}^M \theta_m \frac1{\widetilde Z^\epl_m} e^{-\frac{(x - \mu_m)^2}{2\sigma_m^2} - \frac{1}{\sigma^2} p \left( \frac{x}\epl \right)} \qquad \text{with} \qquad \widetilde Z^\epl_m = \int_\R e^{-\frac{(x - \mu_m)^2}{2\sigma_m^2} - \frac{1}{\sigma^2} p \left( \frac{x}\epl \right)} \dd x.
\end{equation}
Then, we have
\begin{equation}
\norm{\rho^\epl - G_\delta^\epl}_{L^2(\R)}^2 = \int_\R (\rho^\epl(x) - G_\delta^\epl(x))^2 \dd x = \int_\R e^{-\frac{2}{\sigma^2} p \left( \frac{x}\epl \right)} \left( \frac1{Z^\epl} e^{-\frac{1}{\sigma^2} V(x)} - \sum_{m=1}^M \theta_m \frac1{\widetilde Z^\epl_m} e^{-\frac{(x - \mu_m)^2}{2\sigma_m^2}} \right)^2 \dd x,
\end{equation}
which implies
\begin{equation}
\begin{split}
\norm{\rho^\epl - G_\delta^\epl}_{L^2(\R)}^2 &\le 2 \left(\frac{Z}{Z^\epl} \right)^2 \int_\R e^{-\frac{2}{\sigma^2} p \left( \frac{x}\epl \right)} \left( \rho(x) - G_\delta(x) \right)^2 \dd x \\
&\quad + 2 \int_\R e^{-\frac{2}{\sigma^2} p \left( \frac{x}\epl \right)} \left( \sum_{m=1}^M \theta_m \left( \frac{Z}{Z^\epl} - \frac{\sqrt{2\pi\sigma_m^2}}{\widetilde Z^\epl_m} \right) \frac1{\sqrt{2\pi \sigma_m^2}} e^{-\frac{(x - \mu_m)^2}{2\sigma_m^2}} \right)^2 \dd x, \\
\end{split}
\end{equation}
which, due to the Jensen's inequality, gives
\begin{equation}
\begin{split}
\norm{\rho^\epl - G_\delta^\epl}_{L^2(\R)}^2 &\le 2 \left(\frac{Z}{Z^\epl} \right)^2 \int_\R e^{-\frac{2}{\sigma^2} p \left( \frac{x}\epl \right)} \left( \rho(x) - G_\delta(x) \right)^2 \dd x \\
&\quad + 2 \sum_{m=1}^M \theta_m \abs{\frac{Z}{Z^\epl} - \frac{\sqrt{2\pi\sigma_m^2}}{\widetilde Z^\epl_m}}^2 \frac1{2\pi \sigma_m^2} \int_\R e^{-\frac{(x - \mu_m)^2}{\sigma_m^2} - \frac{2}{\sigma^2} p \left( \frac{x}\epl \right)} \dd x.
\end{split}
\end{equation}
Recalling the definition of $Z^\epl$ and $\widetilde Z^\epl_m$
\begin{equation}
Z^\epl = \int_\R e^{- \frac{1}{\sigma^2} \left( V(x) + p \left( \frac{x}\epl \right) \right)} \dd x, \qquad \widetilde Z^\epl_m = \int_\R e^{-\frac{(x - \mu_m)^2}{2\sigma_m^2} - \frac{1}{\sigma^2} p \left( \frac{x}\epl \right)} \dd x,
\end{equation}
and using the notation 
\begin{equation}
\Pi_2 = \int_0^\period e^{- \frac{2}{\sigma^2} p(y)} \dd y,
\end{equation}
we deduce that
\begin{equation}
\lim_{\epl\to0} Z^\epl = \frac{Z\Pi}{\period}, \quad \lim_{\epl\to0} \widetilde Z^\epl_m = \frac{\sqrt{2\pi \sigma_m^2}\Pi}{\period}, \quad \lim_{\epl\to0} \int_\R e^{-\frac{2}{\sigma^2} p \left( \frac{x}\epl \right)} \left( \rho(x) - G_\delta(x) \right)^2 \dd x = \frac{\Pi_2}{\period} \norm{\rho - G_\delta}_{L^2(\R)}^2.
\end{equation}
Therefore, we obtain
\begin{equation}
\limsup_{k\to\infty} \norm{\rho^{\epl_k} - G_\delta^{\epl_k}}_{L^2(\R)}^2 \le \frac{2\Pi_2\period}{\Pi^2} \norm{\rho - G_\delta}_{L^2(\R)}^2,
\end{equation}
which implies
\begin{equation}
\limsup_{k\to\infty} \norm{\rho^{\epl_k} - G_\delta^{\epl_k}}_{L^2(\R)} \le \frac{\sqrt{2\Pi_2\period}}{\Pi} \norm{\rho - G_\delta}_{L^2(\R)} \le \frac{\sqrt{2\Pi_2\period}}{\Pi} \delta.
\end{equation}
Now let $G^{\epl_k}_{\delta,N_k}$ be the projection of $G^{\epl_k}_\delta$ onto the span of the first $N_k$ Hermite functions. Using the Gaussian mixture approximation, the triangle inequality, and the fact that the projection has smaller norm, we get for all $\delta > 0$
\begin{equation}
\begin{split}
\norm{\rho^{\epl_k}_{N_k} - \rho}_{L^2(\R)} &\le \norm{\rho^{\epl_k}_{N_k} - G^{\epl_k}_{\delta,N_k}}_{L^2(\R)} + \norm{G^{\epl_k}_{\delta,N_k} - G_\delta}_{L^2(\R)} + \norm{G_\delta - \rho}_{L^2(\R)} \\
&\le \norm{\rho^{\epl_k} - G^{\epl_k}_\delta}_{L^2(\R)} + \norm{G^{\epl_k}_{\delta,N_k} - G_\delta}_{L^2(\R)} + \delta,
\end{split}
\end{equation}
which implies
\begin{equation}
\begin{split}
\limsup_{k\to\infty} \norm{\rho^{\epl_k}_{N_k} - \rho}_{L^2(\R)} &\le \limsup_{k\to\infty} \norm{\rho^{\epl_k} - G_\delta^{\epl_k}}_{L^2(\R)} + \limsup_{k\to\infty} \norm{G^{\epl_k}_{\delta,N_k} - G_\delta}_{L^2(\R)} + \delta \\
&\le \limsup_{k\to\infty} \norm{G^{\epl_k}_{\delta,N_k} - G_\delta}_{L^2(\R)} + \left( \frac{\sqrt{2\Pi_2\period}}{\Pi} + 1 \right) \delta.
\end{split}
\end{equation}
Then by \cref{cor:Gaussian-mixture} we have for any $\delta > 0$
\begin{equation}
\lim_{k\to\infty} \norm{G^{\epl_k}_{\delta,N_k} - G_\delta}_{L^2(\R)} = 0,
\end{equation}
so we obtain
\begin{equation}
0 \le \liminf_{k\to\infty} \norm{\rho^{\epl_k}_{N_k} - \rho}_{L^2(\R)} \le \limsup_{k\to\infty} \norm{\rho^{\epl_k}_{N_k} - \rho}_{L^2(\R)} \le \left( \frac{\sqrt{2\Pi_2\period}}{\Pi} + 1 \right) \delta,
\end{equation}
which yields
\begin{equation}
\lim_{k\to\infty} \norm{\rho^{\epl_k}_{N_k} - \rho}_{L^2(\R)} = 0,
\end{equation}
which is the desired result in the general setting. 
\end{proof}

\begin{remark} \label{rem:subsequences}
The subsequences $\epl_k$ and $N_k$ in \cref{pro:General-Case} are introduced to present the most general form of the result, where $N_k$ is not required to follow a specific growth rate relative to $\epl_k$, but only needs to satisfy the upper bound given in equation \eqref{eq:bound_N}. In particular, as long as $\epl_k \to 0$, $N_k \to \infty$, and $N_k$ respects the constraint in \eqref{eq:bound_N}, \cref{pro:General-Case} holds for any such subsequence. An alternative formulation is to fix the number of Fourier modes $N$ to be as large as allowed by the constraint, i.e., 
\begin{equation}
\bar N = \bar N(\epl) =  \left\lfloor \frac{\pi^2}{\gamma\period^2\epl^2}\right\rfloor,
\end{equation}
which yields the convergence
\begin{equation}
\lim_{\epl\to0} \norm{\rho^\epl_{\bar N(\epl)} - \rho}_{L^2(\R)} = 0.
\end{equation}
Conceptually, \cref{pro:General-Case} means that ``$\epl$ must vanish before $N$ diverges''. Specifically, from the weak convergence of $\rho^\epl$ to $\rho$, it follows that
\begin{equation} \label{eq:iterated_limit_convergence}
\lim_{N\to\infty} \lim_{\epl\to0} \norm{\rho^\epl_N - \rho}_{L^2(\R)} = 0,
\end{equation}
but this result is weaker than \cref{pro:General-Case} and does not provide a quantitative guideline for selecting $N$. Moreover, an additional confirmation of the fact that $N$ should diverge more slowly than $\epl$ vanishes is that we cannot exchange the order of the two limits in equation \eqref{eq:iterated_limit_convergence} because
\begin{equation}
\lim_{\epl\to0} \lim_{N\to\infty} \norm{\rho^\epl_N - \rho}_{L^2(\R)} = \lim_{\epl\to0} \norm{\rho^\epl - \rho}_{L^2(\R)},
\end{equation}
and the last limit does not exist, since $\rho^\epl$ converges to $\rho$ only weakly in $L^2(\R)$, not strongly.
\end{remark}

\begin{remark}
The proof of \cref{pro:General-Case} shows that the approximation $\rho^\epl_N$ converges exponentially fast to the density $\rho$, with rate $\mathcal O(e^{- 1/\epl^2})$ given by \cref{cor:Gaussian-mixture}, up to an arbitrarily small error $\delta>0$. On the other hand, following up on \cref{rem:polynomial_rate}, and using the polynomial rate in \cite[Corollary A.4]{BKP25} and properties of Hermite functions, it is possible to prove that 
\begin{equation}
\norm{\rho_N^\epl - \rho}_{L^2(\R)} \le \epl^\ell N^{\frac\ell2 + 1}.
\end{equation}
Setting $N = \epl^{-\tau}$ with $\tau > 0$, this becomes
\begin{equation}
\norm{\rho_N^\epl - \rho}_{L^2(\R)} \le \epl^{\ell - \tau \left( \frac\ell2 + 1 \right)},
\end{equation}
where the right-hand side converges to zero as $\epl\to0$ provided $\tau < 2\ell/(\ell+2)$, with polynomial rate. Therefore, as $\ell \to \infty$, $\tau$ can be taken arbitrarily close to 2, but never equal to 2. In contrast, our result achieves convergence with $N = \epl^{-2}$, corresponding to a larger number of Fourier modes.
\end{remark}

\cref{pro:General-Case} shows that the deterministic term $\mathfrak q_N^\epl$ in equation \eqref{eq:deterministic_stochastic_terms} vanishes as $\epl\to0$, provided that $N \to \infty$ is chosen appropriately. In the next section, we turn our attention to the stochastic term $\mathcal Q_N^{T,\epl}$.

\subsection{Analysis of the stochastic component $\mathcal Q_N^{T,\epl}$}

Let us first introduce some quantities. Define the scale functions for $x \in \R$
\begin{equation} \label{eq_A-harmonic_function}
f_\epl(x) \defeq \int_0^x \exp \left( \frac{1}{\sigma^2} \left( V(z) + p \left( \frac{z}{\epl} \right) \right) \right) \dd z, \qquad
f(x) \defeq \int_0^x \exp \left( \frac{1}{\sigma^2} V(z) \right) \dd z,
\end{equation}
which belong to $C^2(\R)$ under \cref{as:potentials} and are harmonic with respect to the differential operators 
\begin{equation} \label{eq:generators}
\mathcal{A}_\epl \defeq \left[ -V' - \frac{1}{\epl} p' \left( \frac{\cdot}{\epl} \right) \right] \partial_x + \sigma^2 \partial_{xx}, \qquad \mathcal{A} \defeq - \mathcal K V' \partial_x + \mathcal K \sigma^2 \partial_{xx},
\end{equation}
respectively, i.e., $\mathcal{A}_\epl f_\epl = \mathcal{A} f = 0$. Setting $\xi^\epl \defeq f_\epl \circ X^\epl$ and $\xi \defeq f \circ X$ and applying Itô's formula we can eliminate the drift terms and obtain the transformed SDEs
\begin{align}
\label{eq:transformed_sde_eps}
\d \xi^\epl_t &= \frac{1}{m^\epl(\xi^\epl_t)} \dd W_t, \qquad t \in [0, T], \qquad \xi^\epl_0 = f_\epl(x_0), \\
\label{eq:transformed_sde}
\d \xi_t &= \frac{1}{m(\xi_t)} \dd W_t, \qquad t \in [0, T], \qquad \xi_0 = f(x_0),
\end{align}
where
\begin{align}
\label{eq:m_def_eps}
m^\epl(x) &\defeq \frac{1}{\sqrt{2 \sigma^2} f'_\epl(g_\epl(x))}, \qquad g_\epl(x) \defeq f_\epl^{-1}(x), \qquad x \in \R, \\
\label{eq:m_def}
m(x) &\defeq \frac{1}{\sqrt{2 \mathcal K \sigma^2} f'(g(x))}, \qquad g(x) \defeq f^{-1}(x), \qquad x \in \R.
\end{align}
Note that $f_\epl', f' > 0$, so that the inverse functions $g_\epl, g$ are well-defined. Moreover, the invariant densities $\rho^\epl, \rho$ in \eqref{eq:invariant_density} can be written in terms of $f_\epl, f$ as
\begin{equation}
\rho^\epl(x) = \frac1{Z^\epl f_\epl'(x)}, \qquad \rho(x) = \frac1{Z f'(x)}.
\end{equation}
In the following, $\Pr^{x_0}$ (and analogously for $\E^{x_0}$) denotes $\Pr^{x_0} \circ (X^\epl)^{-1} = \Pr((X^{\epl,x_0})^{-1}(\cdot))$, where $X^{\epl, x_0}$ is the unique strong solution to \eqref{eq:SDE_multiscale} with $X_0^\epl = x_0$.

As it will become apparent later on, the convergence of $\mathcal Q_N^{T,\epl}$ in \eqref{eq:deterministic_stochastic_terms} to zero builds on the validity of an ``$\epl$-dependent'' mean ergodic theorem, i.e., for any initial condition $x_0$ it must hold 
\begin{equation} \label{eq:eps_mean_ergodic_theorem}
\E^{x_0} \left[ \abs{\frac{1}{T_\epl} \int_0^{T_\epl} \varphi(X_t^\epl) \dd t - \int_\R \varphi(y) \rho^\epl(y) \dd y}^2 \right] \to 0, \qquad \text{as } \epl\to0,
\end{equation}
where $\varphi \colon \R \to \R$ is a bounded, measurable function and, crucially, $T_\epl = \mathcal{O}(\epl^{-\zeta})$, with $\zeta>0$, as $\epl\to0$; cf.~\cite{BK25}.

\subsubsection{An $\epl$-dependent mean ergodic theorem with rates}

In order to prove the ergodic theorem \eqref{eq:eps_mean_ergodic_theorem}, we will follow along the lines of the original proof in \cite[Section 18]{GiS72}, where the result is derived for $T \to \infty$ in the single-scale setting, i.e., it does not depend on $\epl$. Many of the main ideas are contained in the aforementioned work, but, since we require specific convergence rates in $\epl$ and the approximation of $X_\epl$ to $X$ only prevails in a weak sense, we need to modify the approach to our setting and extend the proofs.

First, notice that an integral transformation yields
\begin{equation} \label{eq:transformation_MET}
\frac{1}{T_\epl} \int_0^{T_\epl} \varphi(X_t^\epl) \dd t - \frac{1}{Z^\epl} \int_\R \varphi(x) \rho^\epl(x) \dd x = \frac{1}{T_\epl} \int_0^{T_\epl} \varphi(g_\epl(\xi^\epl_t))  \dd t - \frac{1}{C_{m^\epl}} \int_\R \varphi(g_\epl(x)) m^\epl(x)^2 \dd x,
\end{equation}
where $\xi_t^\epl, g_\epl, m^\epl$ are defined in equations \eqref{eq:transformed_sde_eps}, \eqref{eq:m_def_eps}, and $C_{m^\epl}$ is given by
\begin{equation}
C_{m^\epl} \defeq \int_\R m^\epl(x)^2 \dd x.
\end{equation}
Therefore, it is sufficient to prove \eqref{eq:eps_mean_ergodic_theorem} for the solution of equation \eqref{eq:transformed_sde_eps}.

We start with a lemma that summarizes some properties that will be used throughout what follows and is crucial to get the convergence rates.

\begin{lemma} \label{lem:g_properties}
Under \cref{as:potentials}, the following statements hold:
\begin{enumerate}[leftmargin=*,label=\roman*)]
\item There exist constants $\Cm{1}, \Cm{2}>0$ such that $\Cm{1} \le C_{m^\epl} \le \Cm{2}$ for all $\epl > 0$.
\item There exist constants $C_1^-, C_1^+, C_2^-, C_2^+ > 0$ such that for sufficiently large $\abs{x}$ with $\abs{x} \to \infty$
\begin{equation} \label{eq:f_eps_bounds}
\begin{aligned}
\frac{C_1^+}{\abs{x}} \exp \left(\frac{\beta}{2 \sigma^2} x^2 \right) &\le \inf_{\epl > 0} f_\epl(x) \le \sup_{\epl > 0} f_\epl(x) \le \frac{C_2^+}{\abs{x}} \exp \left( \frac{L_V + \abs{V'(0)}}{2 \sigma^2} x^2 \right), &x > 0, \\
\frac{C_1^-}{\abs{x}} \exp \left( \frac{L_V + \abs{V'(0)}}{2 \sigma^2} x^2 \right) &\le \inf_{\epl > 0} f_\epl(x) \le \sup_{\epl > 0} f_\epl(x) \le \frac{C_2^-}{\abs{x}} \exp \left(\frac{\beta}{2 \sigma^2} x^2 \right), &x < 0.
\end{aligned}
\end{equation}
\item Let $w(x, y) \defeq -W_{-1}(-x/y^2)$ for $x > 0$ and $y \in \R$ with $\abs{y} \ge \sqrt{ex}$, where $W_{-1}$ is the part of the principal branch of the Lambert W function defined on $[-1/e, 0)$, which is strictly decreasing and satisfies $W_{-1}(-1/e) = -1$ and $\lim_{y \to 0^+} W_{-1}(y) = -\infty$. Then, for sufficiently large $M$ with $M \to \infty$
\begin{equation} \label{eq:int_m_eps_bounds_BigO}
\begin{aligned}
\int_{\abs{y}>M} m^\epl(y)^2 \dd y &= \BigO{\frac{\exp \left(-\frac{r}{2 l} w(r^\wedge, M) \right)}{\sqrt{w(r^\wedge, M)}}}, &r^\wedge \defeq \frac{r}{C_1^- \wedge C_2^+}, \\
\int_{-M}^M \abs{y} m^\epl(y)^2 \dd y &= \BigO{\sqrt{w(r^\vee, M)}}, &r^\vee \defeq \frac{r}{C_1^+ \vee C_2^-},
\end{aligned}
\end{equation}
where $a \wedge b := \min\{a, b\}$, $a \vee b := \max\{a, b\}$ for $a, b \in \R$ and
\begin{equation} \label{eq:lr_def}
l \defeq \frac{L_V + \abs{V'(0)}}{\sigma^2}, \qquad r \defeq \frac{\beta}{\sigma^2}.
\end{equation}
\end{enumerate}
\end{lemma}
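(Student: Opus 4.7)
Part (i) is the simplest: I would apply the change of variables $x = f_\epl(y)$ in $C_{m^\epl} = \int_\R m^\epl(x)^2 \dd x$, which, using $m^\epl(x)^2 \dd x = (2\sigma^2)^{-1}[f_\epl'(y)]^{-1}\dd y$, collapses to the identity $C_{m^\epl} = Z^\epl/(2\sigma^2)$. Since $p$ is continuous and $\period$-periodic, hence uniformly bounded, one sandwiches $Z^\epl$ between $e^{\mp\|p\|_\infty/\sigma^2} Z$, with $Z = \int_\R e^{-V(z)/\sigma^2}\dd z < \infty$ by the dissipativity part of \cref{as:potentials}. This immediately supplies uniform constants $\Cm{1}, \Cm{2}$.

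For part (ii), the same boundedness of $p$ reduces the problem to two-sided estimates of $\int_0^x e^{V(z)/\sigma^2}\dd z$. I would combine the Lipschitz bound $|V'(z)| \le |V'(0)| + L_V|z|$ (which yields $V(z) \le (L_V + |V'(0)|) z^2/2$ once $|z|$ is large enough) with the dissipativity bound $V(z) \ge \beta z^2/2$ valid for $|z|\ge R$, so that the exponent $V/\sigma^2$ sits between two Gaussian rates. Laplace's method, or equivalently one integration by parts against the exponential weight, then gives the leading behavior $\int_0^x e^{V(z)/\sigma^2}\dd z \sim (\sigma^2/V'(x)) e^{V(x)/\sigma^2}$ as $|x|\to\infty$; combining this with the above bounds on $V$ and $V'$ produces the $1/|x|$ prefactor together with the stated Gaussian exponentials, with constants independent of $\epl$ after absorbing the $e^{\pm\|p\|_\infty/\sigma^2}$ factors into the $C_i^\pm$.

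Part (iii) is the main effort. The first step is again the substitution $y = f_\epl(z)$, which yields the key identity
\begin{equation*}
m^\epl(y)^2 \dd y = \frac{1}{2\sigma^2} e^{-[V(z) + p(z/\epl)]/\sigma^2} \dd z,
\end{equation*}
so that both integrals reduce, up to bounded factors from $e^{p/\sigma^2}$, to (weighted) integrals of $e^{-V(z)/\sigma^2}$ over preimages $\{z : |f_\epl(z)|>M\}$ or $\{z : |f_\epl(z)|\le M\}$. Using the bounds from (ii), the boundary of the relevant region corresponds to thresholds $z_M^\pm$ satisfying equations of the form $(C/|z|) e^{\alpha z^2/2} = M$ for an appropriate $\alpha\in\{l,r\}$ and constant $C$. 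Solving via $u = \alpha z^2$ gives $u e^{-u} = \alpha C^2/M^2$, whence $u = -W_{-1}(-\alpha C^2/M^2)$: this is precisely the $w(\cdot,M)$ appearing in the statement, with $r^\wedge$, respectively $r^\vee$, being the rescaling that matches the correct choice of $C_1^-\wedge C_2^+$ or $C_1^+\vee C_2^-$ (one selects the bound that yields the \emph{tightest} containment of the region, from above for the tail integral and from below for the truncated one). The tail integral is then controlled by the standard Gaussian-tail estimate $\int_{z_M}^\infty e^{-r z^2/2}\dd z \asymp (r z_M)^{-1} e^{-r z_M^2/2}$, giving the first $\BigO$; the second integral is analogous, with the bounded range producing the $\sqrt{w(r^\vee,M)}$ growth once the linear weight $|y|=|f_\epl(z)|$ is re-expressed in $z$ via (ii).

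The principal obstacle is the bookkeeping in (iii): for each of the two integrals one has to identify, among the four bounds in (ii), the one that produces the desired containment of the relevant region, and then track through the transformation $u = \alpha z^2$ the correct constants so that the Lambert-$W$ arguments $r^\wedge$ and $r^\vee$ come out as stated. Once this is set up, the Gaussian-tail estimates and the asymptotics of $W_{-1}$ near $0^-$ are standard and produce the claimed rates.
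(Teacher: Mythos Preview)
Your overall strategy matches the paper's: for (i) the change of variables reducing $C_{m^\epl}$ to $Z^\epl/(2\sigma^2)$ is exactly what the paper does (the paper phrases it as $\frac{1}{2\sigma^2}\int_\R 1/f_\epl'(y)\,\d y$); for (ii) the paper also sandwiches $f_\epl'$ between constant multiples of $f'$ using boundedness of $p$, bounds $V$ above and below by quadratics via Lipschitz and dissipativity, and obtains the $|x|^{-1}e^{\alpha x^2/2}$ asymptotics (via L'H\^opital rather than integration by parts, but equivalently); and for the first integral in (iii) the paper likewise inverts (ii) through the Lambert~$W$ function to bound $g_\epl(\pm M)$ and then applies the Gaussian tail estimate to $\int_{g_\epl(M)}^\infty e^{-rz^2/2}\,\d z$.

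There is, however, a genuine gap in your treatment of the second integral in (iii). After the substitution $y=f_\epl(z)$ you obtain $\int_{g_\epl(-M)}^{g_\epl(M)} |f_\epl(z)|/f_\epl'(z)\,\d z$, and you propose to ``re-express $|y|=|f_\epl(z)|$ in $z$ via (ii)''. But the upper bound on $|f_\epl(z)|$ from (ii) carries the fast rate $e^{lz^2/2}$, while the lower bound on $f_\epl'(z)$ only gives $e^{rz^2/2}$; when $l>r$ (which is the generic case) the resulting quotient $\sim |z|^{-1}e^{(l-r)z^2/2}$ blows up, and you cannot conclude that the integral is $\mathcal O(\text{length of the interval})$. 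The paper instead observes directly that the ratio $f(z)/f'(z)$ tends to zero (hence is uniformly bounded): by L'H\^opital, $\lim_{z\to\infty} f(z)/f'(z)=\lim_{z\to\infty}\sigma^2/V'(z)=0$ from the dissipativity assumption, and the bound transfers to $f_\epl/f_\epl'$ uniformly in $\epl$ via the sandwich $C_f^{(1)}f'\le f_\epl'\le C_f^{(2)}f'$. Once the integrand is bounded, the integral is $\mathcal O\bigl(g_\epl(M)-g_\epl(-M)\bigr)=\mathcal O\bigl(\sqrt{w(r^\vee,M)}\bigr)$ as claimed. This is a single missing observation, but without it the argument for the second estimate in (iii) does not go through.
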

\begin{proof}
The proof is divided into three steps, corresponding to the three claims.

\textbf{Step i).} First recall that by \cref{as:potentials} iii) there exist $\beta > 0$ and $R \ge 1$ such that for all $x \in \R$ with $\abs{x} \ge R$
\begin{equation} \label{eq:recurrence_cond_mod}
- \sign(x) V'(x) \le -\beta \abs{x},
\end{equation}
which implies
\begin{equation}
0 < C_m \defeq \int_\R m(x)^2 \dd x = \frac{1}{2 \mathcal K \sigma^2} \int_\R \frac{1}{f'(x)} \dd x = \frac{1}{2 \mathcal K \sigma^2} \int_\R \exp \left( -\frac{1}{\sigma^2} \int_0^z V'(y) \dd y \right) \dd x < \infty.
\end{equation}
The continuity and periodicity of $p$ imply for every $x \in \R$
\begin{equation} \label{eq:f'_eps_bounds}
\Cf{1} f'(x) \le f_\epl'(x) \le \Cf{2} f'(x),
\end{equation}
where
\begin{equation}
\Cf{1} \defeq \exp \left( \frac{\min_{y \in [0, L]} p(y)}{\sigma^2} \right), \qquad \Cf{2} \defeq \exp \left( \frac{\max_{y \in [0, L]} p(y)}{\sigma^2} \right).
\end{equation}
Then, by making a change of variables, we have
\begin{equation}
C_{m^\epl} = \int_\R m^\epl(x)^2 \dd x = \int_\R \left( \frac{1}{\sqrt{2 \sigma^2} f'_\epl(g_\epl(x))} \right)^2 \dd x = \frac{1}{2 \sigma^2} \int_\R \frac{1}{f'_\epl(x)} \dd x,
\end{equation}
which, due to equation \eqref{eq:f'_eps_bounds}, gives
\begin{equation}
C_{m^\epl} \le \frac{1}{2\sigma^2 \Cf{1}} \int_\R \frac{1}{f'(x)} \dd x \eqdef \Cm{2}.
\end{equation}
A lower bound can be established in the same manner.

\textbf{Step ii).} For this asymptotic result we consider $x \in \R$ sufficiently large such that \eqref{eq:recurrence_cond_mod} holds. By \eqref{eq:f'_eps_bounds} it suffices to analyze $f$ for these $x$. Let $x \ge R$ first, then
\begin{equation}
f(x) = f(R) + f'(R) \int_R^x \exp \left( \int_R^y \frac{V'(z)}{\sigma^2} \dd z \right) \dd y.
\end{equation}
On the one hand, by the global Lipschitz assumption on $V'$ we have that
\begin{equation}
\int_R^y \frac{V'(z)}{\sigma^2} \dd z \le \frac{L_V + \abs{V'(0)}}{2 \sigma^2} (y^2 - R^2),
\end{equation}
and, on the other hand, the recurrence property described by \eqref{eq:recurrence_cond_mod} gives
\begin{equation}
\int_R^y \frac{V'(z)}{\sigma^2} \dd z \ge \frac{\beta}{2 \sigma^2} (y^2 - R^2).
\end{equation}
Consecutive applications of L'Hôpital's rule show that for any $x_0 \in \R$ and $\eta > 0$
\begin{equation} \label{eq:L'Hospital_appl_+}
\lim_{x \to \infty} \frac{2 \eta x \int_{x_0}^x \exp(\eta y^2) \dd y}{\exp(\eta x^2)} = 1.
\end{equation}
This gives us for sufficiently large $x \ge R$ and some constants $C_1, C_2 > 0$
\begin{equation}
\begin{aligned}
f(x) &\le f(R) + f'(R) \exp \left( -\frac{\beta}{2 \sigma^2} R^2 \right) \int_R^x \exp \left( \frac{L_V + |V'(0)|}{2 \sigma^2} y^2 \right) \dd y \\[0.25cm]
&\le C_1 + C_2 x^{-1} \exp \left( \frac{L_V + |V'(0)|}{2 \sigma^2} x^2 \right),
\end{aligned}
\end{equation}
and similarly for some constants $C_3, C_4 > 0$
\begin{equation}
\begin{aligned}
f(x) &\ge f(R) + f'(R) \exp \left( -\frac{\beta}{2 \sigma^2} R^2 \right) \int_R^x \exp \left(\frac{\beta}{2 \sigma^2} y^2 \right) \dd y \\[0.25cm]
&\ge C_3 + C_4 x^{-1} \exp \left(\frac{\beta}{2 \sigma^2} x^2 \right) \ge C_4 x^{-1} \exp \left(\frac{\beta}{2 \sigma^2} x^2 \right).
\end{aligned}
\end{equation}
Using equation \eqref{eq:f'_eps_bounds}, we thus have for sufficiently large $x$ with $x \to \infty$
\begin{equation} \label{eq:f_eps_estimate_+}
\frac{C_1^+}{x} \exp \left(\frac{\beta}{2 \sigma^2} x^2 \right) \le \inf_{\epl > 0} f_\epl(x) \le \sup_{\epl > 0} f_\epl(x) \le \frac{C_2^+}{x} \exp \left( \frac{L_V + |V'(0)|}{2 \sigma^2} x^2 \right).
\end{equation}
The case $x \le -R$ works almost the same due to the symmetry of the global Lipschitz and recurrence condition. One only needs to be careful with the signs, e.g., instead of equation \eqref{eq:L'Hospital_appl_+}, one proves
\begin{equation} \label{eq:L'Hospital_appl_-}
\lim_{x \to -\infty} \frac{- 2 \eta x \int_x^{x_0} \exp(\eta y^2) \dd y}{\exp(\eta x^2)} = 1.
\end{equation}
Eventually we arrive at the statement that for sufficiently large $|x|$ with $x \to -\infty$
\begin{equation}    \label{eq:f_eps_estimate_-}
\frac{C_1^-}{-x} \exp \left( \frac{L_V + |V'(0)|}{2 \sigma^2} x^2 \right) \le \inf_{\epl > 0} f_\epl(x) \le \sup_{\epl > 0} f_\epl(x) \le \frac{C_2^-}{-x} \exp \left(\frac{\beta}{2 \sigma^2} x^2 \right).
\end{equation}

\textbf{Step iii)}. As it will become evident later in the proof, we first need to invert the left-hand side and the right-hand side of the inequalities \eqref{eq:f_eps_bounds}, and this can be done in terms of the Lambert W function as follows, see \cite{Cha13} and the references therein for a definition and properties of this special function. For any $C, \eta > 0$ and $y \in \R$ with $\abs{y} \ge \sqrt{2e \eta/C}$ we have
\begin{equation}
y = \frac{C}{x} \exp(\eta x^2) \qquad \Longleftrightarrow \qquad -\frac{2 \eta}{C y^2} = -2 \eta x^2 \exp(-2 \eta x^2) \eqdef a \exp(a), 
\end{equation}
where $a \defeq -2 \eta x^2$. The solution to this last equation is exactly given by the Lambert W function $W_{-1}$, that is,
\begin{equation}
a = W_{-1}(-2\eta/C y^2).
\end{equation}
Hence, we obtain
\begin{equation}
x = \pm \sqrt{-\frac{1}{2 \eta} W_{-1}\left(-\frac{2\eta}{C y^2}\right)}.
\end{equation}
We can now invert the inequalities \eqref{eq:f_eps_bounds} for sufficiently large $|x|$ with $\abs{x} \to \infty$. Notice that the thresholds beyond which the inequalities \eqref{eq:f_eps_bounds} hold do not depend on $\epl$, and due to \eqref{eq:f'_eps_bounds} we can therefore also set $\epl$-dependent thresholds beyond which the inequalities for the inverse $g_\epl = f_\epl^{-1}$ hold. Thus, inverting the asymptotic bounds yields for sufficiently large $|y|$ with $\abs{y} \to \infty$
\begin{equation} \label{eq:g_eps_bounds}
\begin{aligned}
\sqrt{\frac{w(l/C_2^+, y)}{l}} &\le \inf_{\epl > 0} g_\epl(y) \le \sup_{\epl > 0} g_\epl(y) \le \sqrt{\frac{w(r/C_1^+, y)}{r}}, &y>0, \\
-\sqrt{\frac{w(r/C_2^-, y)}{r}} &\le \inf_{\epl > 0} g_\epl(y) \le \sup_{\epl > 0} g_\epl(y) \le -\sqrt{\frac{w(l/C_1^-, y)}{l}}, &y<0,
\end{aligned}
\end{equation}
where $l$ and $r$ are defined in equation \eqref{eq:lr_def}. We can now prove the two claims in \eqref{eq:int_m_eps_bounds_BigO}. Let $M>0$ be sufficiently large such that all the inequalities in equation \eqref{eq:g_eps_bounds} hold for $M$ and $-M$, respectively. Similarly as before, we can estimate
\begin{equation}
\int_{\abs{y} > M} m^\epl(y)^2 \dd y \le \frac{1}{2 \sigma^2 \Cf{1}} \left[ \int_{g_\epl(M)}^\infty \frac{\d y}{f'(y)} + \int_\infty^{g_\epl(-M)} \frac{\d y}{f'(y)} \right].
\end{equation}
The first integral can be upper bounded using equation \eqref{eq:recurrence_cond_mod} with some constant $C_1 > 0$ (which may change from inequality to inequality)
\begin{equation}
\int_{g_\epl(M)}^\infty \frac{\d y}{f'(y)} \le C_1  \int_{g_\epl(M)}^\infty \exp \left( -\frac{\beta}{2 \sigma^2} x^2 \right) \dd y \le C_1 \frac{\exp \left(-\frac{\beta}{2 \sigma^2} g_\epl(M)^2 \right)}{g_\epl(M)},
\end{equation}
where we employed a standard inequality for the tails of a Gaussian integral in the last step. Using equation \eqref{eq:g_eps_bounds}, for $M \to \infty$ we get
\begin{equation}
\int_{g_\epl(M)}^\infty \frac{\d y}{f'(y)} = \BigO{\frac{\exp \left(-\frac{r}{2 l} w(r/C_2^+, M) \right)}{\sqrt{w(r/C_2^+, M)}}}.
\end{equation}
Using the symmetry and equation \eqref{eq:g_eps_bounds} again, we similarly obtain
\begin{equation}
\int_\infty^{g_\epl(-M)} \frac{\d y}{f'(y)} = \BigO{\frac{\exp \left(-\frac{r}{2 l} w(r/C_1^-, M) \right)}{\sqrt{w(r/C_1^-, M)}}}.
\end{equation}
Combining these two estimates and due to the monotonicity of $W_{-1}$, we get the first claim. The second claim is obtained after noting that due to equation \eqref{eq:recurrence_cond_mod} we have
\begin{equation}
\lim_{x \to \infty} \frac{f(x)}{f'(x)} = - \lim_{x \to -\infty} \frac{f(x)}{f'(x)} = 0.
\end{equation}
This, in turn, gives for sufficiently large $M>0$
\begin{equation}
\int_{-M}^M \abs{y} m^\epl(y) \dd y = \BigO{\int_{g_\epl(-M)}^{g_\epl(M)} \frac{\abs{f(y)}}{f'(y)} \dd y} = \BigO{g_\epl(M) - g_\epl(-M)},
\end{equation}
which implies
\begin{equation}
\int_{-M}^M \abs{y} m^\epl(y) \dd y = \BigO{\sqrt{w(r/C_1^+, M)} + \sqrt{w(r/C_2^-, M)}} = \BigO{\sqrt{w(r/C_1^+ \vee C_2^-, M)}},
\end{equation}
where we used the monotonicity of $W_{-1}$ in the last step.
\end{proof}

\begin{remark} \label{rem:lambert_w_bounds}
In the following Lemma \ref{lem:chi_eps_properties} we need the bounds derived in \cite[Theorem 1]{Cha13}, namely for $u > 0$
\begin{equation}
-1 - \sqrt{2u} - u < W_{-1}(-\exp(-u-1)) < -1 - \sqrt{2u} - \frac{2}{3}u.
\end{equation}
Then, identifying $u \defeq \log(M^2/\Tilde{r})-1$ with $\Tilde{r} \in \{ r^\wedge, r^\vee \}$ and $M > 0$ sufficiently large, we can establish the following estimates for the right-hand sides of equation \eqref{eq:int_m_eps_bounds_BigO} 
\begin{equation}
\begin{aligned}
\frac{\exp \left(-\frac{r}{2 l} w(r^\wedge, M) \right)}{\sqrt{w(r^\wedge, M)}} = \BigO{\frac{1}{\log(M) M^{r/l}}}, \qquad \sqrt{w(r^\vee, M)} = \BigO{\sqrt{\log(M)}}.
\end{aligned}
\end{equation}
Therefore, for $t_0 \ge 2 \sqrt{e r^\wedge}$, we obtain
\begin{equation}
\int_{t_0}^{T} \frac{\exp \left(-\frac{r}{2 l} w(r^\wedge, t/2) \right)}{\sqrt{w(r^\wedge, t/2)}} \dd t =
\begin{cases}
\BigO{1},         & \text{if } r > l, \\
\BigO{\log(T)},   & \text{if } r = l, \\
\BigO{T^{1-r/l}}, & \text{if } r < l,
\end{cases}
\end{equation}
which will be used in the proof of the mean ergodic theorem in Proposition \ref{pro:eps_mean_ergodic_theorem}.
\end{remark}

The following two results are needed for establishing the mean ergodic theorem. Since $\epl > 0$ is fixed in this context, the analysis from \cite[Section 18]{GiS72} applies without modification, and therefore the proofs of \cref{lem:stationarity,lem:semigroup_ergodicity} below are omitted.

\begin{lemma} \label{lem:stationarity}
Let $\epl>0$. For any measurable function $\varphi \colon \R \to \R$ such that
\begin{equation}
\int_\R \abs{\varphi(y)} m^\epl(y)^2 \dd y < \infty,
\end{equation}
it holds for $t>0$
\begin{equation} \label{eq:stationarity}
\int_\R \E^x \left[ \varphi(\xi^x_\epl(t)) \right] m^\epl(y)^2 \dd y = \int_\R \varphi(y) m^\epl(y)^2 \dd y.
\end{equation}
\end{lemma}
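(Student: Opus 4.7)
The plan is to identify the probability measure with density $\pi^\epl(y) \defeq m^\epl(y)^2/C_{m^\epl}$ as the (unique) invariant measure of the driftless diffusion $\xi^\epl$ defined in \eqref{eq:transformed_sde_eps}. Granting this, the claimed identity \eqref{eq:stationarity} is just the defining property of an invariant measure, $\int P_t^\epl \varphi \dd \pi^\epl = \int \varphi \dd \pi^\epl$, with $P_t^\epl$ the Markov semigroup of $\xi^\epl$, multiplied through by $C_{m^\epl}$.

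The quickest route to the invariant density is by push-forward: since $\xi^\epl_t = f_\epl(X^\epl_t)$ and $X^\epl$ admits the invariant density $\rho^\epl(x) = (Z^\epl f_\epl'(x))^{-1}$, the image of $\rho^\epl$ under the $C^2$-diffeomorphism $f_\epl$ is invariant for $\xi^\epl$; the change of variables $y = f_\epl(x)$ shows that this image density equals $(Z^\epl (f_\epl'(g_\epl(y)))^2)^{-1} = 2\sigma^2 m^\epl(y)^2/Z^\epl$, and one more change of variables gives $Z^\epl = 2\sigma^2 C_{m^\epl}$, confirming that the image density is precisely $\pi^\epl$. As an independent and more analytic check, one can verify the stationary Fokker--Planck equation directly: because \eqref{eq:transformed_sde_eps} has vanishing drift, its generator is $\mathcal L^\epl = (2 m^\epl(y)^2)^{-1} \partial_{yy}$, so any stationary density $\pi$ satisfies $\partial_{yy}(\pi/(2 m^\epl(y)^2)) = 0$, which forces $\pi/(2 m^\epl(y)^2)$ to be affine in $y$; integrability combined with the two-sided bounds from \cref{lem:g_properties}(i) eliminates the linear part and yields $\pi = \pi^\epl$. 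Uniqueness of the invariant measure for $\xi^\epl$ is inherited from the geometric ergodicity of $X^\epl$ recalled in \cref{sec:problem} through the bijection $f_\epl$.

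Once invariance of $\pi^\epl$ is in hand, the passage to \eqref{eq:stationarity} is a standard density argument. For bounded $\varphi$, invariance under $P_t^\epl$ directly yields the identity. To accommodate an arbitrary $\varphi$ merely satisfying $\int |\varphi| m^\epl{}^2 \dd y < \infty$, I would approximate it by the truncations $\varphi_n \defeq \varphi \mathbbm{1}_{\{|\varphi| \le n\}}$, apply the identity to each $\varphi_n$, and pass to the limit by dominated convergence, using $|P_t^\epl \varphi_n| \le P_t^\epl |\varphi|$ and the invariance identity applied to $|\varphi|$ to supply an integrable envelope. I do not anticipate a serious obstacle in this argument: the whole result is a textbook consequence of the fact that the scale function $f_\epl$ was constructed precisely to eliminate the drift and display the invariant density of $\xi^\epl$ in the explicit form $m^\epl(y)^2/C_{m^\epl}$.
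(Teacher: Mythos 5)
The paper itself does not write out a proof of this lemma; it simply observes that, since $\epl$ is held fixed here, the argument from Gikhman--Skorokhod (\cite{GiS72}, Section~18) applies verbatim. In that classical treatment one works directly with the transformed one-dimensional diffusion $\xi^\epl$ of \eqref{eq:transformed_sde_eps} and identifies the speed density $m^\epl(y)^2$ as (unnormalized) stationary via the scale--speed decomposition. Your proposal reaches the same conclusion by a slightly different, and in fact more economical, route: you push forward the \emph{already known} invariant density $\rho^\epl(x) = (Z^\epl f_\epl'(x))^{-1}$ of $X^\epl$ (recorded in \eqref{eq:invariant_density}) under the diffeomorphism $f_\epl$, and a short change-of-variables computation (together with $Z^\epl = 2\sigma^2 C_{m^\epl}$, which follows from the definition of $C_{m^\epl}$) shows that the image density is exactly $m^\epl(y)^2/C_{m^\epl}$. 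Since conjugating a Markov process by a bijection conjugates its semigroup and pushes forward invariant measures, this establishes invariance of $m^\epl(y)^2 \dd y$ and hence \eqref{eq:stationarity}, first for bounded $\varphi$ and then, via truncation and dominated/monotone convergence with the envelope provided by the identity applied to $|\varphi|$, for any $\varphi$ with $\int |\varphi| (m^\epl)^2 \dd y < \infty$. This is correct and self-contained, and it reuses structure the paper has already set up rather than re-deriving the speed-density characterization.

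One small imprecision in your secondary ``analytic check'': having reduced the stationary Fokker--Planck equation to $\pi(y) = 2 m^\epl(y)^2(a + by)$, the linear coefficient $b$ is forced to vanish by \emph{nonnegativity} of the density (since $m^\epl > 0$ everywhere, $a+by\ge 0$ for all $y$ implies $b=0$), not by integrability and \cref{lem:g_properties}(i); integrability of $|y| m^\epl(y)^2$ need not even hold when $r<l$ (cf.\ \cref{rem:lambert_w_bounds}). This does not affect the main push-forward argument, which stands on its own.
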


\begin{lemma} \label{lem:semigroup_ergodicity}
It holds for any $x \in \R$ and $\epl > 0$
\begin{equation} \label{eq:semigroup_ergodicity}
\begin{aligned}
&\abs{ \frac{1}{T_\epl} \int_0^{T_\epl} \E^x \varphi(\xi^\epl_t) \dd t - \frac{1}{C_{m^\epl}} \int_\R \varphi(y) m^\epl(y)^2 \dd y} \\
&\hspace{2cm} \le \frac{2 \norm{\varphi}_\infty }{\Cm{1}} \left[ \int_{\abs{x-y}>T_{\epl}} m^\epl(y)^2 \dd y + \frac{2 \Cm{2}}{T_\epl} \int_{\abs{x-y} \le T_{\epl}} \abs{x-y} m^\epl(y)^2 \dd y \right].
\end{aligned}
\end{equation}
\end{lemma}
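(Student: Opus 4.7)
The plan is to follow the strategy of \cite[Section~18]{GiS72}, adapted to our $\epl$-dependent setting. Set $P_t\varphi(x) \defeq \E^x[\varphi(\xi_t^\epl)]$ and $F(z) \defeq \int_0^{T_\epl} P_t\varphi(z) \dd t$. Invariance of $m^\epl(y)^2 \dd y$ under $(P_t)$ (\cref{lem:stationarity}) combined with Fubini's theorem yields $\int_\R F(y) m^\epl(y)^2 \dd y = T_\epl \int_\R \varphi(y) m^\epl(y)^2 \dd y$, whence
\begin{equation}
F(x) - \frac{T_\epl}{C_{m^\epl}} \int_\R \varphi(y) m^\epl(y)^2 \dd y = \frac{1}{C_{m^\epl}} \int_\R [F(x) - F(y)] m^\epl(y)^2 \dd y.
\end{equation}
Dividing by $T_\epl$ and using $C_{m^\epl} \ge \Cm{1}$ from \cref{lem:g_properties} reduces the proof to controlling $\int_\R |F(x) - F(y)| m^\epl(y)^2 \dd y$.

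I would then split this integral according to whether $|x-y| > T_\epl$ (``far regime'') or $|x-y| \le T_\epl$ (``near regime''). In the far regime, the trivial bound $|F(x) - F(y)| \le 2 T_\epl \norm{\varphi}_\infty$ produces directly the first term in the statement. In the near regime, I need the Lipschitz-type estimate $|F(x) - F(y)| \le 4\Cm{2} \norm{\varphi}_\infty |x-y|$. To derive it, let $\tau \defeq \inf\{t \ge 0 : \xi_t^\epl = y\}$ under $\Pr^x$ and apply the strong Markov property at $\tau$ to decompose
\begin{equation}
F(x) = \E^x\!\int_0^{\tau \wedge T_\epl} \varphi(\xi_t) \dd t + \E^x[\mathbf{1}_{\{\tau \le T_\epl\}} \eta(T_\epl - \tau)], \qquad F(y) = \eta(T_\epl),
\end{equation}
where $\eta(s) \defeq \E^y\!\int_0^s \varphi(\xi_u) \dd u$. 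Bounding each of the resulting three terms in $F(x)-F(y)$ by $\norm{\varphi}_\infty$ times a time and exploiting both the identity $\E^x[\tau \wedge T_\epl] = \E^x[\tau \mathbf{1}_{\{\tau \le T_\epl\}}] + T_\epl \Pr^x[\tau > T_\epl]$ and the elementary inequality $T_\epl \mathbf{1}_{\{\tau > T_\epl\}} \le \tau \mathbf{1}_{\{\tau > T_\epl\}}$, I obtain $|F(x) - F(y)| \le 2 \norm{\varphi}_\infty \E^x[\tau]$.

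It remains to establish the hitting-time bound $\E^x[\tau] \le 2\Cm{2} |x-y|$. Because $\xi^\epl$ has generator $\mathcal L = (2 m^\epl(z)^2)^{-1} \partial_z^2$, the function $v(z) \defeq \E^z[\tau_y]$ solves the ODE $v''(z) = -2 m^\epl(z)^2$ with $v(y) = 0$ and, by the positive recurrence of $\xi^\epl$ ensured by $C_{m^\epl} < \infty$, with $v'(\pm\infty) = 0$. Integrating twice explicitly and using $\int_\R m^\epl(z)^2 \dd z = C_{m^\epl} \le \Cm{2}$ produces the claimed linear bound on $v$, and assembling the near and far contributions yields the statement. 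The main obstacle is the tight constant bookkeeping in the strong Markov decomposition: using $T_\epl \Pr^x[\tau > T_\epl] \le \E^x[\tau \mathbf{1}_{\{\tau > T_\epl\}}]$ in place of the cruder Markov-inequality bound $\E^x[\tau]$ is what produces the sharp numerical factor $2$ in the final statement rather than a cruder factor $3$.
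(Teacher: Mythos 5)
Your proof is correct and follows essentially the same route as the paper, which omits its own proof and instead defers to \cite[Section 18]{GiS72}; your argument is precisely the Gikhman--Skorokhod strategy (invariance of the speed measure, far/near splitting of $\int[F(x)-F(y)]m^\epl(y)^2\,dy$, strong Markov decomposition at the hitting time of $y$, and the ODE $v''=-2m^\epl(z)^2$ for $\E^z[\tau_y]$), adapted to the explicit constants $\Cm{1},\Cm{2}$, and the constant bookkeeping does reproduce the stated factor $2\Cm{2}/T_\epl$.
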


Let us now introduce the sequence of functions for $\epl > 0$ that appear in \cref{lem:semigroup_ergodicity}
\begin{equation} \label{eq:def_chi_eps}
\chi^\epl(t, x) \defeq \frac{2 \Cphi}{\Cm{1}} \left[ \int_{\abs{x-y}>t} m^\epl(y)^2 \dd y + \frac{2 \Cm{2}}{t} \int_{\abs{x-y} \le t} \abs{x-y} m^\epl(y)^2 \dd y \right], \quad t \in \R^+, \; x \in \R,
\end{equation}
which satisfy a certain bound as proved in the next lemma. 

\begin{lemma} \label{lem:chi_eps_properties}
The functions $\chi^\epl$ are uniformly bounded on $\R^+ \times \R$ and they satisfy the following estimate for $t, M > 0$ sufficiently large and $x \in [-M, M]$
\begin{equation}
\chi^\epl(t, x) \le \Cphi \left[ \frac{\abs{x}+\sqrt{\log(M)}}{t} + \frac{1}{\log(M) M^{r/l}} + \frac{\exp \left(-\frac{r}{2 l} w(r^\wedge, t/2) \right)}{\sqrt{w(r^\wedge, t/2)}} + \mathds{1}_{(t/2, \infty)}(\abs{x}) \right],
\end{equation}
with a constant $C > 0$ independent of $\epl$.
\end{lemma}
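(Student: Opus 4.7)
The plan is to establish the uniform bound by a direct estimate and then refine it via a case analysis depending on whether $|x|$ lies below or above $t/2$. For the uniform bound, the first integral in the definition of $\chi^\epl$ is immediately controlled by $\int_\R m^\epl(y)^2 \dd y = C_{m^\epl} \le \Cm{2}$ through \cref{lem:g_properties}(i), while on the integration region $\{|x-y|\le t\}$ the ratio $|x-y|/t$ is at most one, so the second integral is likewise bounded by $\Cm{2}$. This yields $\chi^\epl(t,x) \le 6\Cphi\Cm{2}/\Cm{1}$, uniformly in $\epl>0$, $t>0$ and $x\in\R$, which settles the first assertion.

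For the refined estimate, the regime $|x|>t/2$ is handled immediately by the uniform bound, since the indicator $\mathds{1}_{(t/2,\infty)}(|x|)$ is then active and absorbs a constant factor. The nontrivial regime is $|x|\le t/2$. For the first integral, the triangle inequality gives the inclusion $\{y:|x-y|>t\} \subseteq \{y:|y|>t/2\}$, so \cref{lem:g_properties}(iii) applied at level $t/2$ yields
\begin{equation*}
\int_{|x-y|>t} m^\epl(y)^2 \dd y \;\le\; \int_{|y|>t/2} m^\epl(y)^2 \dd y \;=\; \BigO{\frac{\exp(-\frac{r}{2l}w(r^\wedge, t/2))}{\sqrt{w(r^\wedge, t/2)}}},
\end{equation*}
which reproduces the third term on the right-hand side of the desired inequality.

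For the second integral, the strategy is to use $|x-y|\le |x|+|y|$ to peel off the contribution $|x|C_{m^\epl}/t$, which produces the $|x|/t$ term, and then to split the remaining integral $\int_{|x-y|\le t}|y|m^\epl(y)^2\dd y$ at the threshold $|y|=M$. On $\{|y|\le M\}$, \cref{lem:g_properties}(iii) combined with \cref{rem:lambert_w_bounds} gives $\int_{-M}^M|y|m^\epl(y)^2\dd y = \BigO{\sqrt{\log M}}$, which produces the $\sqrt{\log M}/t$ term. On $\{|y|>M\}$, the constraints $|x-y|\le t$ and $|x|\le t/2$ force $|y|\le 3t/2$, so this piece is dominated by $(3t/2)\int_{|y|>M}m^\epl(y)^2\dd y = \BigO{t/(\log M\cdot M^{r/l})}$, which after division by $t$ delivers the $1/(\log M\cdot M^{r/l})$ term. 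Summing all contributions reproduces the claimed inequality.

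The main delicate point in the argument is the double role played by the independent parameters $t$ and $M$ in controlling the second integral: one must use $M$ to bound $|y|$ on the bulk while simultaneously exploiting the constraint $|y|\le 3t/2$ to convert the tail $\int_{|y|>M}m^\epl(y)^2\dd y$ into the $\epl$-uniform decay rate furnished by \cref{rem:lambert_w_bounds}. The remainder of the argument is a routine assembly of the Lambert $W$ asymptotics with the splitting of the integration domain just described.
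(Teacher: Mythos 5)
Your proof is correct and follows essentially the same route as the paper: the uniform bound by direct estimation, the inclusion $\{|x-y|>t\}\subseteq\{|y|>t/2\}$ (you express it as a case split on $|x|\lessgtr t/2$, the paper as a union bound — logically the same), the splitting of the bulk term at the threshold $|y|=M$, and the conversion of both pieces via \cref{lem:g_properties}(iii) and \cref{rem:lambert_w_bounds} into the $\sqrt{\log M}/t$ and $1/(\log M\cdot M^{r/l})$ terms. The only cosmetic deviation is in the tail piece $\{|x-y|\le t,\ |y|>M\}$: you derive $|y|\le 3t/2$ from $|x-y|\le t$ and $|x|\le t/2$ and then cancel the $t$, whereas the paper simply bounds $|x-y|/t\le 1$ on that set, which is one step shorter and does not require $|x|\le t/2$ at that point; either route yields the same order.
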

\begin{proof}
The appearing constant $C>0$ in this proof will be different from line to line, but always independent of $\epl$ and $\Cphi$. The uniform boundedness follows immediately from the definition of $\chi^\epl$. Then, from the triangle inequality we have the inclusion
\begin{equation}
\{ y \in \R \; | \; \abs{x-y} > t \} \subseteq \{ y \in \R \; | \; \abs{y} > t/2 \} \cup \{ y \in \R \; | \; \abs{y} \le t/2, \; \abs{x} > t/2 \}, 
\end{equation}
which, due to \cref{lem:g_properties,rem:lambert_w_bounds}, implies for sufficiently large $t>0$
\begin{equation}
\begin{aligned}
\int_{\abs{x-y}>t} m^\epl(y)^2 \dd y &\le \int_{\abs{y}>t/2} m^\epl(y)^2 \dd y + \Cm{2} \mathds{1}_{(t/2, \infty)}(\abs{x}) \\
&\le C \left[ \frac{\exp \left(-\frac{r}{2 l} w(r^\wedge, t/2) \right)}{\sqrt{w(r^\wedge, t/2)}} + \mathds{1}_{(t/2, \infty)}(\abs{x}) \right],
\end{aligned}
\end{equation}
which bounds the first integral in equation \eqref{eq:def_chi_eps}. Then, we again use \cref{lem:g_properties,rem:lambert_w_bounds} to obtain for sufficiently large $t, M > 0$
\begin{equation}
\begin{aligned}
\frac{1}{t} \int_{\abs{x-y} \le t} \abs{x-y} m^\epl(y)^2 \dd y &= \frac{1}{t} \left[ \int_{\substack{\abs{x-y} \le t, \\ \abs{y} \le M}} \abs{x-y} m^\epl(y)^2 \dd y + \int_{\substack{\abs{x-y} \le t, \\ \abs{y} > M}} \abs{x-y} m^\epl(y)^2 \dd y \right] \\
&\le C \left[ \frac{\abs{x}}{t} + \frac{1}{t} \int_{-M}^M \abs{y} m^\epl(y)^2 \dd y + \int_{\abs{y} > M} m^\epl(y)^2 \dd y \right] \\
&\le C \left[ \frac{\abs{x}+\sqrt{\log(M)}}{t} + \frac{1}{\log(M) M^{r/l}} \right],
\end{aligned}
\end{equation}
which bounds the second integral in equation \eqref{eq:def_chi_eps} and completes the proof.
\end{proof}

We now move to the process $\xi^\epl$ and consider its transition probability function $p^\epl$, which, in this one-dimensional setting, has the explicit form \cite[Section 13]{GiS72}:
\begin{equation} \label{eq:transition_probability_density_eps}
\begin{aligned} 
p^\epl(h, x, y) &= \frac{m^\epl(y)}{\sqrt{2\pi h}} \left(\frac{m^\epl(y)}{m^\epl(x)}\right)^{1/2} \exp\left( -\frac{1}{2h} \left( \int_x^y m^\epl(z) \dd z \right)^2 \right) \\
&\quad\times \E \left[ \exp \left( h \int_0^1 B^\epl (\mathscr N(h,g_\epl(x),g_\epl(y),u)) \dd u \right) \right], \qquad x, y \in \R, \; h > 0,
\end{aligned}
\end{equation}
where
\begin{equation}
B^\epl(x) = -\frac12 b^\epl(x)^2 - \frac12 (b^\epl)'(x), \qquad b^\epl(x) \defeq -V'(x) - \frac{1}{\epl} p'\left( \frac{x}{\epl}\right), \qquad x \in \R,
\end{equation}
and
\begin{equation}
\mathscr N(h,x,y,u) \sim \mathcal N(x + u(y-x), hu(1-u)), \qquad u \in (0,1).
\end{equation}
In the next result, we give bounds on the transition probability function $p^\epl$ when $h = \epl^2$. In fact, this is the point where we encounter a recurring, yet interesting, phenomenon when dealing with multiscale diffusion problems. As we will see in the proof, in order to get good estimates on the transition probability density function, the time step $h$ between the two transitional states $x$ and $y$ should be of order $\BigO{\epl^2}$.

\begin{lemma} \label{lem:transition_probability_density_eps_bound}
There exist constants $K_1, K_2 > 0$ independent of $\epl$ such that for all $x,y \in \R$ and all sufficiently small $\epl > 0$ satisfying $K_1\epl^4 < \min \{ 2, 1/(8\sigma^2) \}$, the following bound holds
\begin{equation} \label{eq:transition_probability_density_eps_bound}
\begin{aligned}
p^\epl(\epl^2, x, y) &\le e^{\frac{K_2}{2}} \sqrt{\frac{2}{2 - K_1 \epl^4}} \exp \left( 2 K_1 \epl^2 g_\epl(x)^2 \right) \\
&\quad \times \frac{m^\epl(y)^{3/2}}{m^\epl(x)^{1/2}} \frac{1}{\sqrt{2 \pi \epl^2}} \exp \left( -\frac{1 - 8K_1 \sigma^2 \epl^4}{4 \sigma^2 \epl^2} (g_\epl(y) - g_\epl(x))^2 \right).
\end{aligned}
\end{equation}
\end{lemma}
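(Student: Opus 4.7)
The plan is to estimate the three $\epl$-dependent ingredients of the explicit formula \eqref{eq:transition_probability_density_eps} separately: the integral $\int_x^y m^\epl(z)\dd z$, the pointwise size of $B^\epl$, and the resulting Brownian-bridge expectation. First, using the substitution $z = f_\epl(w)$ together with the definition of $m^\epl$ in \eqref{eq:m_def_eps}, one immediately has $m^\epl(z)\dd z = \dd w/\sqrt{2\sigma^2}$, so that
\begin{equation}
\int_x^y m^\epl(z) \dd z = \frac{g_\epl(y) - g_\epl(x)}{\sqrt{2\sigma^2}},
\end{equation}
and hence the middle exponential factor in \eqref{eq:transition_probability_density_eps} becomes, for $h=\epl^2$, exactly $\exp(-(g_\epl(y)-g_\epl(x))^2/(4\sigma^2\epl^2))$. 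This already produces the base Gaussian profile in the desired bound; the shift $8K_1\sigma^2\epl^4$ will arise as a correction from the expectation.

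Next, I would exploit the fact that the scaling $h=\epl^2$ cancels the $1/\epl^2$ singular terms hidden in $B^\epl$. Expanding
\begin{equation}
\epl^2 B^\epl(z) = -\tfrac{\epl^2}{2} V'(z)^2 - \epl\, V'(z) p'(z/\epl) - \tfrac{1}{2} p'(z/\epl)^2 + \tfrac{\epl^2}{2} V''(z) + \tfrac{1}{2} p''(z/\epl),
\end{equation}
and using that $p'$, $p''$ are uniformly bounded by periodicity, $\abs{V'(z)} \le L_V\abs{z} + \abs{V'(0)}$ by global Lipschitz continuity, and $\abs{V''(z)}\le L_V$ almost everywhere, a routine Young-type estimate yields the pointwise bound
\begin{equation}
\epl^2 B^\epl(z) \le K_1 \epl^2 z^2 + \frac{K_2}{2},
\end{equation}
with constants $K_1,K_2>0$ independent of $\epl$. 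Writing $\tilde x = g_\epl(x)$, $\tilde y = g_\epl(y)$, $\mu_u = \tilde x + u(\tilde y - \tilde x)$ and $\sigma_u^2 = \epl^2 u(1-u)$, so that $\mathscr N(\epl^2,\tilde x,\tilde y,u) \sim \mathcal N(\mu_u,\sigma_u^2)$, I would then plug this estimate into the expectation, factor out $e^{K_2/2}$, and apply Jensen's inequality to exchange the exponential with the $\dd u$-integral, reducing the problem to evaluating $\E[\exp(K_1\epl^2 \mathscr N_u^2)]$ at each $u$. The standard Gaussian moment identity
\begin{equation}
\E[\exp(K_1 \epl^2 \mathscr N_u^2)] = \frac{1}{\sqrt{1 - 2K_1\epl^4 u(1-u)}} \exp\Bigl(\frac{K_1 \epl^2 \mu_u^2}{1 - 2K_1\epl^4 u(1-u)} \Bigr)
\end{equation}
is applicable because $u(1-u) \le 1/4$ together with $K_1\epl^4<2$ yields $2K_1\epl^4 u(1-u)<1$, and the prefactor is uniformly dominated by $\sqrt{2/(2-K_1\epl^4)}$, which is exactly the prefactor appearing in the claimed bound.

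It remains to combine the pieces. Bounding $\mu_u^2 \le 2\tilde x^2 + 2(\tilde y - \tilde x)^2$ via the elementary inequality $2u\tilde x(\tilde y-\tilde x) \le \tilde x^2 + u^2(\tilde y-\tilde x)^2$ and $u^2\le 1$, then invoking the condition $K_1\epl^4 < 1/(8\sigma^2)$ to keep the coefficient of $(\tilde y - \tilde x)^2$ (after merging with the Gaussian factor produced in the first step) negative and of the claimed form, gives the announced estimate, provided one is allowed to redefine $K_1$ and $K_2$ by $\epl$-independent multiplicative factors that absorb the residual denominators $1/(1-2K_1\epl^4 u(1-u))$. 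The main obstacle is precisely this bookkeeping of constants: the raw output of the Jensen/Gaussian-MGF chain produces a coefficient of $\tilde x^2$ of the form $2K_1\epl^2/(1-2K_1\epl^4 u(1-u))$ rather than the clean $2K_1\epl^2$ required in the statement, and one must verify, using the stated smallness of $K_1\epl^4$, that the coefficient of $(\tilde y-\tilde x)^2$ combines with $-1/(4\sigma^2\epl^2)$ into exactly $-(1 - 8K_1\sigma^2\epl^4)/(4\sigma^2\epl^2)$. All other manipulations are direct substitutions into \eqref{eq:transition_probability_density_eps}.
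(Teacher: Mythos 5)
Your proposal follows the same broad outline as the paper's proof: the integral substitution yielding $\int_x^y m^\epl = (g_\epl(y)-g_\epl(x))/\sqrt{2\sigma^2}$, the choice $h=\epl^2$ to neutralize the $1/\epl^2$ in $B^\epl$, a pointwise quadratic bound $\epl^2 B^\epl(z) \le K_1\epl^2 z^2 + K_2/2$, Jensen to move the $\dd u$-average past the exponential, and a Gaussian moment computation. The one genuinely different choice is where the two arguments split the randomness: you invoke the exact noncentral chi-squared MGF $\E[\exp(t\mathcal N(\mu_u,\sigma_u^2)^2)] = (1-2t\sigma_u^2)^{-1/2}\exp\bigl(t\mu_u^2/(1-2t\sigma_u^2)\bigr)$, which couples the mean and variance and leaves a $u$-dependent denominator $1-2K_1\epl^4 u(1-u)$ in the exponent. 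The paper instead applies $(a+b\mathcal Z)^2 \le 2a^2 + 2b^2\mathcal Z^2$ \emph{before} integrating, so the mean contribution factors out of the expectation and the variance contribution produces a pure centered MGF $\E[\exp(K_1\epl^4\mathcal Z^2/4)] = \sqrt{2/(2-K_1\epl^4)}$; this is precisely why the stated bound has the clean coefficients $2K_1\epl^2$ and $\sqrt{2/(2-K_1\epl^4)}$ with the \emph{same} $K_1$ in both. You correctly flag that the exact-MGF route inflates the coefficient of $\mu_u^2$ by the factor $(1-2K_1\epl^4 u(1-u))^{-1} \le (1-K_1\epl^4/2)^{-1}$, and your suggestion that this can be absorbed by enlarging $K_1$ is sound (the lemma only asserts existence of constants), but you leave that last bookkeeping step open and, more importantly, the inflated coefficient then does not propagate into the $\sqrt{2/(2-K_1\epl^4)}$ prefactor with the same $K_1$, so the statement would need a cosmetic restatement. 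In short: correct idea, a workable but slightly less economical route; the paper's pre-expectation decoupling is the trick that makes the constants come out exactly as claimed.
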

\begin{proof}
First, note that there exist constants $K_1, K_2 > 0$ such that for all $x \in \R$ and $\epl > 0$
\begin{equation} \label{eq:derivative_drift_assumption}
-(b^\epl)'(x) \le K_1 x^2 + \frac{K_2}{\epl^2}.
\end{equation}
Using Jensen's inequality and equation \eqref{eq:derivative_drift_assumption}, we can estimate the expectation appearing in equation \eqref{eq:transition_probability_density_eps} as follows for $h>0$:
\begin{equation}
\begin{aligned}
\E \left[ \exp \left( h \int_0^1 B^\epl (\mathscr N(h,g_\epl(x),g_\epl(y),u)) \dd u \right) \right]
&\le \int_0^1 \E \left[ \exp \left( h B^\epl (\mathscr N(h,g_\epl(x),g_\epl(y),u)) \right) \right] \dd u \\
&\le \int_0^1 \E \left[ \exp \left( -\frac{h}{2} (b^\epl)'(\mathscr N(h,g_\epl(x),g_\epl(y),u)) \right) \right] \dd u \\
&\le e^{\frac{K_2 h}{2 \epl^2}} \int_0^1 \E \left[ \exp \left( \frac{K_1 h}{2} \mathscr N(h,g_\epl(x),g_\epl(y),u)^2 \right) \right] \dd u.
\end{aligned}
\end{equation}
In order to leverage against the $1/\epl^2$ factor in the exponential of the preceding estimate, we choose $h = \epl^2$ and obtain
\begin{equation}
\begin{aligned}
&\E \left[ \exp \left( \epl^2 \int_0^1 B^\epl (\mathscr N(\epl^2,g_\epl(x),g_\epl(y),u)) \dd u \right) \right] \\
&\hspace{3cm}\le e^{\frac{K_2}{2}} \int_0^1 \E \left[ \exp \left( \frac{K_1 \epl^2}{2} \mathscr N(\epl^2,g_\epl(x),g_\epl(y),u)^2 \right) \right] \dd u, \\
&\hspace{3cm} \eqdef e^{\frac{K_2}{2}} \mathcal I_\epl(x,y),
\end{aligned}
\end{equation}
where we note that for $u \in (0,1)$
\begin{equation}
\mathscr N(\epl^2,g_\epl(x),g_\epl(y),u) \sim g_\epl(x) + u(g_\epl(y) - g_\epl(x)) + \sqrt{\epl^2 u(1-u)} \mathcal Z, \qquad \mathcal Z \sim \mathcal{N}(0, 1).
\end{equation}
We then get
\begin{equation}
\begin{aligned}
\mathcal I_\epl(x,y) &\le \E \left[ \exp \left( K_1 \epl^2 \left( \left[ g_\epl(x) + u(g_\epl(y) - g_\epl(x)) \right]^2 + \epl^2 u(1-u) \mathcal Z^2 \right) \right) \right] \\
&\le \exp \left( K_1 \epl^2 \left[ g_\epl(x) + u(g_\epl(y) - g_\epl(x)) \right]^2 \right) \E \left[ \exp \left(\frac{K_1 \epl^4}{4} \mathcal Z^2 \right) \right], \\
\end{aligned}
\end{equation}
which implies
\begin{equation}
\begin{aligned}
\mathcal I_\epl(x,y) &\le \exp \left( 2 K_1 \epl^2 \left[ g_\epl(x)^2 + (g_\epl(y) - g_\epl(x))^2 \right] \right) \E \left[ \exp \left(\frac{K_1 \epl^4}{4} \mathcal Z^2 \right) \right] \\
&= \exp \left( 2 K_1 \epl^2 \left[ g_\epl(x)^2 + (g_\epl(y) - g_\epl(x))^2 \right] \right) \sqrt{\frac{2}{2 - K_1 \epl^4}}.
\end{aligned}
\end{equation}
Hence, we have for sufficiently small $\epl>0$ and $x, y \in \R$ the following estimate 
\begin{equation} \label{eq:estimate_expectation_transition}
\begin{aligned}
&\E \left[ \exp \left( \epl^2 \int_0^1 B^\epl (N(\epl^2,g_\epl(x),g_\epl(y),u)) \dd u \right) \right] \\
&\hspace{2cm}\le \exp\left( \frac{K_2}{2} \right) \exp \left( 2 K_1 \epl^2 \left[ g_\epl(x)^2 + (g_\epl(y) - g_\epl(x))^2 \right] \right) \sqrt{\frac{2}{2 - K_1 \epl^4}}.
\end{aligned}
\end{equation}
We now investigate the other terms in equation \eqref{eq:transition_probability_density_eps}. Recalling the definition of $m^\epl$ and using an integral substitution give
\begin{equation}
\int_x^y m^\epl(z) \dd z = \frac{g_\epl(y) - g_\epl(x)}{\sqrt{2\sigma^2}},
\end{equation}
which implies
\begin{equation}
\begin{aligned}
&\frac{1}{\sqrt{2 \pi \epl^2}} \exp\left( -\frac{1}{2 \epl^2} \left( \int_x^y m^\epl(z) \dd z \right)^2 \right) \exp \left( 2 K_1 \epl^2 (g_\epl(y) - g_\epl(x))^2 \right) \\
&\hspace{4cm}= \frac{1}{\sqrt{2 \pi \epl^2}} \exp \left( -\left( \frac{1}{4 \sigma^2 \epl^2} - 2 K_1 \epl^2 \right) (g_\epl(y) - g_\epl(x))^2 \right) \\
&\hspace{4cm}= \frac{1}{\sqrt{2 \pi \epl^2}} \exp \left( -\frac{1 - 8K_1 \sigma^2 \epl^4}{4 \sigma^2 \epl^2} (g_\epl(y) - g_\epl(x))^2 \right).
\end{aligned}
\end{equation}
This, together with equation \eqref{eq:estimate_expectation_transition}, yields the desired result.
\end{proof}

The next result is a direct consequence of the just established \cref{lem:transition_probability_density_eps_bound}.

\begin{corollary} \label{cor:tpd_eps_estimate}
There exist constant $C, K_1 > 0$ independent of $\epl$ such that for all $x_0,y \in \R$ and all sufficiently small $\epl > 0$ satisfying $K_1\epl^4 < \min \{ 2, 1/(8\sigma^2) \}$, the following bound holds
\begin{equation} \label{eq:tpd_eps_estimate}
p^\epl(\epl^2,f_\epl(x_0),y) \le C m^\epl(y)^{3/2} \phi_\epl(g_\epl(y) - x_0),
\end{equation}
where $\phi_\epl$ is the probability density function of the centered Gaussian $\mathcal N(0, 2 \sigma^2 \epl^2/(1 - 8K_1 \sigma^2 \epl^4))$.
\end{corollary}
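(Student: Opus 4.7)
The plan is to apply Lemma~\ref{lem:transition_probability_density_eps_bound} directly with the choice $x = f_\epl(x_0)$ and then reorganize the resulting bound to identify a Gaussian density and to absorb all remaining $\epl$-dependent factors into a single constant. Since $g_\epl = f_\epl^{-1}$, this substitution yields $g_\epl(x) = x_0$, so the factor $\exp(2K_1\epl^2 g_\epl(x)^2)$ becomes $\exp(2K_1\epl^2 x_0^2)$, which tends to $1$ as $\epl\to 0$ and is therefore uniformly bounded for all sufficiently small $\epl$ (with a bound depending on the fixed deterministic initial condition $x_0$). The prefactor $e^{K_2/2}\sqrt{2/(2-K_1\epl^4)}$ coming from the lemma is likewise bounded uniformly in small $\epl$.

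Next, I would control the factor $1/m^\epl(f_\epl(x_0))^{1/2} = (2\sigma^2)^{1/4}\sqrt{f_\epl'(x_0)}$. By the definition of $f_\epl$ in \eqref{eq_A-harmonic_function}, we have $f_\epl'(x_0) = \exp(\sigma^{-2}(V(x_0) + p(x_0/\epl)))$, and the continuity and periodicity of $p$ ensure that $p(x_0/\epl)$ stays uniformly bounded in $\epl$. Hence this prefactor is uniformly bounded in $\epl$ by a constant depending only on $x_0$, $V$, $p$, and $\sigma$.

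Finally, I would rewrite the Gaussian-like factor occurring in the bound as
\begin{equation*}
\frac{1}{\sqrt{2\pi\epl^2}}\exp\left(-\frac{1-8K_1\sigma^2\epl^4}{4\sigma^2\epl^2}(g_\epl(y)-x_0)^2\right) = \sqrt{\frac{2\sigma^2}{1-8K_1\sigma^2\epl^4}}\,\phi_\epl(g_\epl(y)-x_0),
\end{equation*}
simply by multiplying and dividing by the normalizing constant of $\mathcal N(0, 2\sigma^2\epl^2/(1-8K_1\sigma^2\epl^4))$. The resulting ratio $\sqrt{2\sigma^2/(1-8K_1\sigma^2\epl^4)}$ is clearly bounded uniformly in small $\epl$. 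Collecting all these uniformly bounded prefactors into a single constant $C>0$ yields the claimed estimate \eqref{eq:tpd_eps_estimate}.

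I do not anticipate any genuine obstacle here, since the corollary is essentially a repackaging of Lemma~\ref{lem:transition_probability_density_eps_bound}: the only point requiring mild care is to verify that every $\epl$-dependent factor appearing in the lemma can be bounded uniformly in small $\epl$, which follows at once from the boundedness of $p$, the smallness of $K_1\epl^4$, and the fact that $x_0$ is fixed. The smallness constraint $K_1\epl^4 < \min\{2, 1/(8\sigma^2)\}$ is precisely what is needed to keep both $\sqrt{2/(2-K_1\epl^4)}$ and $\sqrt{2\sigma^2/(1-8K_1\sigma^2\epl^4)}$ finite, ensuring that the resulting constant $C$ is independent of $\epl$.
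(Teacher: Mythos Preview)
Your proposal is correct and follows exactly the paper's approach: the paper's proof is the single sentence ``The desired result follows by substituting $x = f_\epl(x_0)$ into \cref{lem:transition_probability_density_eps_bound},'' and you have simply spelled out the bookkeeping behind that substitution. Your observation that the resulting constant $C$ depends on the fixed deterministic initial condition $x_0$ (through $\exp(2K_1\epl^2 x_0^2)$ and $f_\epl'(x_0)$) is also accurate and consistent with how the corollary is used downstream in \cref{pro:eps_mean_ergodic_theorem}, where $x_0$ is held fixed throughout.
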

\begin{proof}
The desired result follows by substituting $x = f_\epl(x_0)$ into \cref{lem:transition_probability_density_eps_bound}.
\end{proof}

We now come to the main result of this section, which is the $\epl$-dependent mean ergodic theorem.

\begin{proposition} \label{pro:eps_mean_ergodic_theorem}
Let $x_0 \in \R$ and $x_0^\epl := f_\epl(x_0)$. For any bounded, measurable function $\varphi \colon \R \to \R$ we have as $\epl \to 0$
\begin{equation} \label{eq:eps_mean_ergodic_theorem_orig}
\E^{x_0^\epl} \left[ \abs{\frac{1}{T_\epl} \int_0^{T_\epl} \varphi(X^\epl_t)  \dd t - \int_\R \varphi(x) \rho^\epl(x) \dd x}^2 \right] = 
\begin{cases}
\BigO{\Cphi^2 \frac{\log(T_\epl)^{3/4}}{\sqrt{\epl T_\epl}}}, & \text{if } r \ge l, \\
\BigO{\Cphi^2 \frac{1}{\sqrt{\epl T_\epl^{r/l}}}},            & \text{if } r < l,
\end{cases}
\end{equation}
where $l$ and $r$ are defined in equation \eqref{eq:lr_def}.
\end{proposition}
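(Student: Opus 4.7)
The plan is to transfer the estimate to the driftless process $\xi^\epl$ of \eqref{eq:transformed_sde_eps} and then mimic the classical mean ergodic theorem for one-dimensional diffusions in natural scale, in the spirit of the single-scale proof, while tracking quantitatively how every estimate depends on $\epl$. The transformation \eqref{eq:transformation_MET} recasts the quantity of interest as
\[
I \defeq \E^{x_0^\epl}\!\left[\left|\tfrac{1}{T_\epl}\!\int_0^{T_\epl}\!\psi(\xi^\epl_t)\,\dd t - \bar\psi^\epl\right|^2\right],
\]
with $\psi\defeq\varphi\circ g_\epl$, $\bar\psi^\epl\defeq C_{m^\epl}^{-1}\!\int_\R\psi(y)\,m^\epl(y)^2\,\dd y$, and $\|\psi\|_\infty\le\Cphi$. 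Centering by $\tilde\psi\defeq\psi-\bar\psi^\epl$, expanding the square, and applying Fubini together with the Markov property yields
\[
I = \frac{2}{T_\epl^{\,2}}\!\int_0^{T_\epl}\E^{x_0^\epl}\!\left[\tilde\psi(\xi^\epl_s)\!\int_0^{T_\epl-s}\!(P^\epl_u\tilde\psi)(\xi^\epl_s)\,\dd u\right]\dd s,
\]
where $(P^\epl_u\phi)(x)\defeq\E^x[\phi(\xi^\epl_u)]$. Since $\int\tilde\psi\,(m^\epl)^2\,\dd y = 0$ and $\|\tilde\psi\|_\infty\le 2\Cphi$, \cref{lem:semigroup_ergodicity} applied to $\tilde\psi$ gives $|\!\int_0^{T_\epl-s}(P^\epl_u\tilde\psi)(x)\,\dd u|\le 2(T_\epl-s)\chi^\epl(T_\epl-s,x)$; factoring $\Cphi$ out of $\chi^\epl$ as in \eqref{eq:def_chi_eps}, we obtain
\[
|I| \le \frac{C\,\Cphi^{\,2}}{T_\epl^{\,2}}\!\int_0^{T_\epl}(T_\epl-s)\,\E^{x_0^\epl}\!\left[\eta^\epl(T_\epl-s,\xi^\epl_s)\right]\dd s
\]
for an absolute constant $C>0$, where $\eta^\epl$ denotes the bracketed expression in \eqref{eq:def_chi_eps}.

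To bound $\E^{x_0^\epl}[\eta^\epl(T_\epl-s,\xi^\epl_s)]$ I split the range of $s$ at $\epl^2$. On $[0,\epl^2]$, the uniform boundedness of $\eta^\epl$ granted by \cref{lem:chi_eps_properties}, combined with the short interval length, produces a contribution of lower order. On $(\epl^2,T_\epl]$ I apply the Markov property at the distinguished time $\epl^2$ and insert the Gaussian density estimate from \cref{cor:tpd_eps_estimate}; since the $L^\infty$-norm in $z$ of $p^\epl(\epl^2,x_0^\epl,z)$ is of order $\epl^{-1}$, the interpolation of this bound against the integrable controls of $m^\epl$ is precisely what ultimately produces the $\epl^{-1/2}$ factor in the final rate. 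The remaining inner expectation is then controlled by combining the pointwise bound for $\eta^\epl$ from \cref{lem:chi_eps_properties}, the tail controls in \eqref{eq:int_m_eps_bounds_BigO}, the stationarity identity of \cref{lem:stationarity}, and the Lambert $W$ asymptotics collected in \cref{rem:lambert_w_bounds}, after tuning the auxiliary cut-off $M$ as a suitable power of $T_\epl$.

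Integrating in $s$ yields the two regimes announced in the statement. When $r\ge l$, the $\sqrt{\log M}/t$ contribution and the logarithmic decay stemming from $w(r^\wedge,\cdot)$ dominate, producing the rate $\Cphi^{\,2}\log(T_\epl)^{3/4}/\sqrt{\epl T_\epl}$; when $r<l$, the slower tail decay $M^{-r/l}$ becomes binding and gives $\Cphi^{\,2}/\sqrt{\epl T_\epl^{\,r/l}}$. The main obstacle is precisely this last step: since $\xi^\epl$ lacks a uniform-in-$\epl$ spectral gap (the coefficient $1/m^\epl$ degenerates at infinity in an $\epl$-dependent way), all quantitative information must be extracted by carefully interleaving the Lambert-$W$-based tail asymptotics of $f_\epl$, the short-time Gaussian bound on $p^\epl(\epl^2,\cdot,\cdot)$, and the invariance of $m^\epl(\cdot)^2$. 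Keeping track of the correct joint dependence on $\epl$ and $T_\epl$, and respecting the dichotomy between $r\ge l$ and $r<l$, is the technical core of the argument.
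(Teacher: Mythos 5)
Your proposal follows the same high-level strategy as the paper: transform to the driftless process $\xi^\epl$ via \eqref{eq:transformation_MET}, center $\varphi$, expand the second moment using Fubini and the Markov property, apply \cref{lem:semigroup_ergodicity} to introduce $\chi^\epl$, split time at $\epl^2$, use the transition-density bound of \cref{cor:tpd_eps_estimate}, exploit stationarity (\cref{lem:stationarity}) together with the Lambert-W asymptotics of \cref{rem:lambert_w_bounds}, and finally tune $M$. The minor structural difference is the ordering: the paper \emph{first} proves the ergodic estimate for the stationary initial distribution $m^\epl(x)^2\dd x$ (equation \eqref{eq:estimates_BigO}), and \emph{then} transfers to the fixed initial condition $x_0^\epl$ by applying a single Markov step at $\epl^2$ followed by Cauchy--Schwarz (equation \eqref{eq:almost_final_estimate_MET}); you instead propose to insert the Markov step and transition-density bound earlier, inside the $\dd s$ integral. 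Both orderings are viable, but the place where your proposal is genuinely incomplete is the phrase ``interpolation of this bound against the integrable controls of $m^\epl$.'' What is actually needed is a precise Cauchy--Schwarz that splits the factor $m^\epl(y)^{3/2}\phi_\epl(g_\epl(y)-x_0)$ coming from \cref{cor:tpd_eps_estimate} as $\bigl(m^\epl(y)\bigr)\cdot\bigl(m^\epl(y)^{1/2}\phi_\epl(g_\epl(y)-x_0)\bigr)$: the first factor multiplies the inner expectation and produces the weighted $L^2\bigl(m^\epl(y)^2\dd y\bigr)$ norm that stationarity controls (after reducing the fourth moment to the second via boundedness of $\varphi$), while the second factor self-pairs to give $\int_\R m^\epl(y)\,\phi_\epl(g_\epl(y)-x_0)^2\dd y = \int_\R\phi_\epl(z-x_0)^2\dd z = \BigO{1/\epl}$. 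A generic $L^1$--$L^\infty$ interpolation on $p^\epl(\epl^2,x_0^\epl,\cdot)$ alone does not reproduce this structure, because it would lose the weight $m^\epl(y)^2$ needed for \cref{lem:stationarity}. Making this step explicit is exactly what turns your sketch into a proof, and is the reason \cref{cor:tpd_eps_estimate} is stated with the exponent $3/2$ rather than, say, $2$ or $1$. Apart from this, your account matches the paper's argument.
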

\begin{proof}
The constant $C>0$ in this proof will be different from line to line, but always independent of $\epl$ and $\Cphi$. Let
\begin{equation}
\Bar{\varphi}(x) \defeq \varphi(x) - \frac{1}{C_{m^\epl}} \int_\R \varphi(y) m^\epl(y)^2 \dd y, \quad x \in \R.
\end{equation}
First, observe that 
\begin{equation}
\begin{aligned}
\E^{x} \left[ \abs{\frac{1}{T_\epl} \int_0^{T_\epl} \Bar{\varphi}(\xi^\epl_t) \dd t}^2 \right] &= \frac{2}{T_\epl^2} \E^{x} \left[ \int_0^{T_\epl} \Bar{\varphi}(\xi^\epl_t) \int_t^{T_\epl} \Bar{\varphi}(\xi^\epl_s) \dd s \dd t \right] \\
&= \frac{2}{T_\epl^2} \int_0^{T_\epl} \E^{x} \left[ \Bar{\varphi}(\xi^\epl_t) \E^{x} \left[ \left. \int_t^{T_\epl}  \Bar{\varphi}(\xi^\epl_s) \dd s \right| \mathcal{F}_t \right] \right] \dd t \\
&=\frac{2}{T_\epl^2} \int_0^{T_\epl} \E^{x} \left[ \Bar{\varphi}(\xi^\epl_t) \E^{x} \left[ \left. \int_0^{T_\epl-t} \Bar{\varphi}(\xi^\epl_{s+t}) \dd s \right| \mathcal{F}_t \right] \right] \dd t.
\end{aligned}
\end{equation}
Using the Markov property and \cref{lem:semigroup_ergodicity}, we then find that
\begin{equation}
\begin{aligned}
\E^{x} \left[ \abs{\frac{1}{T_\epl} \int_0^{T_\epl} \Bar{\varphi}(\xi^\epl_t) \dd t}^2 \right]^2 &= \frac{2}{T_\epl^2} \int_0^{T_\epl} \E^{x} \left[ \Bar{\varphi}(\xi^\epl_t) \int_0^{T_\epl-t} \E^{\xi^\epl_t} \left[ \Bar{\varphi}(\xi^\epl_s) \right] \dd s \right] \dd t \\
&\le \frac{4 \Cphi}{T_\epl} \int_0^{T_\epl} \E^{x} \left[ \chi^\epl(T_\epl-t, \xi^\epl_t) \right] \dd t,
\end{aligned}
\end{equation}
which, by applying \cref{lem:stationarity}, gives
\begin{equation} \label{eq:bound_expectation_chi}
\begin{aligned}
\int_\R \E^{x} \left[ \left| \frac{1}{T_\epl} \int_0^{T_\epl} \Bar{\varphi}(\xi^\epl_t) \dd t \right|^2 \right] m^\epl(x)^2 \dd x 
&\le \frac{4 \Cphi}{T_\epl} \int_0^{T_\epl} \int_\R \E^{x} \left[ \chi^\epl(T_\epl-t, \xi^\epl_t) \right] m^\epl(x)^2 \dd x \dd t \\
&= \frac{4 \Cphi}{T_\epl} \int_0^{T_\epl} \int_\R \chi^\epl(T_\epl-t, y) m^\epl(y)^2 \dd y \dd t.
\end{aligned}
\end{equation}
Then, by \cref{lem:g_properties,rem:lambert_w_bounds}, it follows for $M > T_\epl/2$ sufficiently large
\begin{equation} \label{eq:split_integral_space}
\begin{aligned}
&\frac{1}{T_\epl} \int_0^{T_\epl} \int_\R \chi^\epl(T_\epl-t, y) m^\epl(y)^2 \dd y \dd t \\
&\hspace{2cm}\le \frac{1}{T_\epl} \int_0^{T_\epl} \int_{-M}^{M} \chi^\epl(T_\epl-t, y) m^\epl(y)^2 \dd y \dd t + C \Cphi \int_{\abs{y}>M} m^\epl(y)^2 \dd y \\
&\hspace{2cm}\le \frac{1}{T_\epl} \int_0^{T_\epl} \int_{-M}^{M} \chi^\epl(t, y) m^\epl(y)^2 \dd y \dd t + \frac{C \Cphi}{\sqrt{\log(M)}M^{r/l}},
\end{aligned}
\end{equation}
and the first term in the right-hand side can be split by choosing a sufficiently large $0 < t_0 < T_\epl$
\begin{equation}
\begin{aligned}
\frac{1}{T_\epl} \int_0^{T_\epl} \int_{-M}^{M} \chi^\epl(t, y) m^\epl(y)^2 \dd y \dd t &= \frac{1}{T_\epl} \int_0^{t_0} \int_{-M}^{M} \chi^\epl(t, y) m^\epl(y)^2 \dd y \dd t \\
&\quad + \frac{1}{T_\epl} \int_{t_0}^{T_\epl} \int_{-M}^{M} \chi^\epl(t, y) m^\epl(y)^2 \dd y \dd t.
\end{aligned}
\end{equation}
Therefore, using \cref{lem:chi_eps_properties,lem:g_properties,rem:lambert_w_bounds}, we get 
\begin{equation} \label{eq:bound_integral_interior_space}
\begin{aligned}
&\frac{1}{T_\epl} \int_0^{T_\epl} \int_{-M}^{M} \chi^\epl(t, y) m^\epl(y)^2 \dd y \dd t \\
&\qquad\le C \Cphi \left[ \frac{1}{T_\epl} + \frac{1}{T_\epl} \int_{t_0}^{T_\epl} \int_{-M}^M \left( \frac{\abs{y}+\sqrt{\log(M)}}{t} + \frac{1}{\sqrt{\log(M)} M^{r/l}} \right) m^\epl(y)^2 \dd y \dd t \right. \\
&\quad\qquad \left. + \frac{1}{T_\epl} \int_{t_0}^{T_\epl} \int_{-M}^M \left( \frac{\exp \left(-\frac{r}{2 l} w(r^\wedge, t/2) \right)}{\sqrt{w(r^\wedge, t/2)}} + \mathds{1}_{(t/2, \infty)}(\abs{y}) \right) m^\epl(y)^2 \dd y \dd t \right] \\
&\qquad\le C \Cphi \left[ \frac{1}{T_\epl} + \frac{\sqrt{\log(M)} \log(T_\epl)}{T_\epl} + \frac{1}{\sqrt{\log(M)}M^{r/l}} + \frac{1}{T_\epl} \int_{t_0}^{T_\epl} \frac{\exp \left(-\frac{r}{2 l} w(r^\wedge, t/2) \right)}{\sqrt{w(r^\wedge, t/2)}} \dd t \right]\;.
\end{aligned}
\end{equation}
Using equations \eqref{eq:bound_expectation_chi}, \eqref{eq:split_integral_space}, \eqref{eq:bound_integral_interior_space}, recalling \cref{rem:lambert_w_bounds}, and choosing $M = T_\epl^{l/r}$, we obtain for sufficiently small $\epl > 0$
\begin{equation} \label{eq:estimates_BigO}
\int_\R \E^{x} \left[ \abs{\frac{1}{T_\epl} \int_0^{T_\epl} \Bar{\varphi}(\xi^\epl_t) \dd t}^2 \right] m^\epl(x)^2 \dd x =
\begin{cases}
\BigO{\Cphi^{2} \frac{\log(T_\epl)^{3/2}}{T_\epl}}, & \text{if } r \ge l, \\
\BigO{\Cphi^{2} \frac{1}{T_\epl^{r/l}}},            & \text{if } r < l.
\end{cases}
\end{equation}
Next, observe that by \cref{cor:tpd_eps_estimate} and the Cauchy-Schwarz inequality we have
\begin{equation} \label{eq:almost_final_estimate_MET}
\begin{aligned}
&\int_\R \E^y \left[ \abs{\frac{1}{T_\epl-\epl^2} \int_0^{T_\epl-\epl^2} \Bar{\varphi}(\xi^\epl_t) \dd t}^2 \right] p^\epl(\epl^2, x_0^\epl, y) \dd y \\
&\qquad\le C \int_\R \E^y \left[ \abs{\frac{1}{T_\epl-\epl^2} \int_0^{T_\epl-\epl^2} \Bar{\varphi}(\xi^\epl_t) \dd t}^2 \right] m^\epl(y)^{3/2} \phi_\epl(g_\epl(y) - x_0) \dd y \\
&\qquad\le C \left( \int_\R \E^y \left[ \abs{\frac{1}{T_\epl-\epl^2} \int_0^{T_\epl-\epl^2} \Bar{\varphi}(\xi^\epl_t) \dd t}^4 \right] m^\epl(y)^{2} \dd y \right)^{1/2} \left( \int_\R m^\epl(y) \phi_\epl(g_\epl(y) - x_0)^2 \dd y \right)^{1/2},
\end{aligned}
\end{equation}
which, due to the boundedness of $\varphi$, a change of variables, and estimates \eqref{eq:estimates_BigO}, implies
\begin{equation}
\begin{aligned}
&\int_\R \E^y \left[ \abs{\frac{1}{T_\epl-\epl^2} \int_0^{T_\epl-\epl^2} \Bar{\varphi}(\xi^\epl_t) \dd t}^2 \right] p^\epl(\epl^2, x_0^\epl, y) \dd y \\
&\qquad\le C \Cphi \left( \int_\R \E^y \left[ \abs{\frac{1}{T_\epl-\epl^2} \int_0^{T_\epl-\epl^2} \Bar{\varphi}(\xi^\epl_t) \dd t}^2 \right] m^\epl(y)^{2} \dd y \right)^{1/2} \left( \int_\R \phi_\epl(y - x_0)^2 \dd y \right)^{1/2} \\
&\qquad= \begin{cases}
\BigO{\Cphi^2 \frac{\log(T_\epl)^{3/4}}{\sqrt{\epl T_\epl}}}, & \text{if } r \ge l, \\
\BigO{\Cphi^2 \frac{1}{\sqrt{\epl T_\epl^{r/l}}}},            & \text{if } r < l.
\end{cases}
\end{aligned}
\end{equation}
Finally, notice that using the Markov property it follows that
\begin{equation}
\begin{aligned}
\E^{x_0^\epl} \left[ \abs{\frac{1}{T_\epl} \int_0^{T_\epl} \Bar{\varphi}(\xi^\epl_t) \dd t}^2 \right] &= \frac{1}{T_\epl^2} \E^{x_0^\epl} \left[ \abs{\int_0^{\epl^2} \Bar{\varphi}(\xi^\epl_t) \dd t}^2 \right] +\frac{1}{T_\epl^2} \E^{x_0^\epl} \left[ \abs{\int_{\epl^2}^{T_\epl} \Bar{\varphi}(\xi^\epl_t) \dd t}^2 \right] \\
&\quad+ \frac{2}{T_\epl^2} \E^{x_0^\epl} \left[ \left( \int_0^{\epl^2} \Bar{\varphi}(\xi^\epl_t) \dd t \right) \left( \int_{\epl^2}^{T_\epl} \Bar{\varphi}(\xi^\epl_t) \dd t \right) \right] \\
&\le \frac{4 \epl^4 \Cphi^2}{T_\epl^2} + \frac{8 \epl^2 \Cphi^2}{T_\epl} + \frac{1}{T_\epl^2} \int_\R \E^y \left[ \abs{\int_0^{T_\epl-\epl^2} \Bar{\varphi}(\xi^\epl_t) \dd t}^2 \right] p^\epl(\epl^2, x_0^\epl, y) \dd y,
\end{aligned}
\end{equation}
which, together with estimates \eqref{eq:almost_final_estimate_MET} and equation \eqref{eq:transformation_MET}, concludes the proof.
\end{proof}

\begin{remark}
The convergence rates provide valuable insight into the interplay between three key parameters: the Lipschitz constant $L_V$, which bounds the maximal slope of the drift, the factor $\abs{V'(0)}$, representing the offset of the drift from the origin, and the dissipativity constant $\beta$, which drives recurrence and ergodicity. These appear in the ratio $r/l = \beta/(L_V + \abs{V'(0)})$. This expression highlights that, for fixed $|V'(0)|$, increasing $\beta$, which corresponds to a stronger inward pull toward the origin at infinity, and decreasing $L_V$, indicating less variability in the drift, lead to faster convergence. Moreover, a larger value for $|V'(0)|$ makes the process ``work'' longer before pulling back to the origin through the dissipativity. However, it should be noted that a limitation of the current proof is that the convergence rate in the regime $r \ge l$ cannot be improved by increasing the ratio $r/l$.
\end{remark}

\subsubsection{Application of the mean ergodic theorem and final convergence result}

We now apply the $\epl$-dependent mean ergodic theorem derived in the previous section to analyze how the stochastic term $\mathcal Q_N^{T,\epl}$ in equation \eqref{eq:deterministic_stochastic_terms} depends on the parameters $N, T$, and $\epl$.  The next result provides explicit convergence rates.

\begin{lemma} \label{lem:rates_stochastic_term}
Under \cref{as:potentials}, it holds
\begin{equation}
\E \left[ \norm{\widehat \rho_N^{T,\epl} - \rho_N^\epl}_{L^2(\R)}^2 \right] = 
\begin{cases}
\BigO{\frac{N\log(T)^{3/4}}{\sqrt{\epl T}}}, & \text{if } r \ge l, \\
\BigO{\frac{N}{\sqrt{\epl T^{r/l}}}},        & \text{if } r < l,
\end{cases}
\end{equation}
where $r$ and $l$ are defined in equation \eqref{eq:lr_def}.
\end{lemma}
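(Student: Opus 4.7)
The plan is to exploit the orthonormality of the Hermite basis to reduce the $L^2$ norm to a sum of variances of the empirical Fourier coefficients, and then to apply the $\epl$-dependent mean ergodic theorem (\cref{pro:eps_mean_ergodic_theorem}) coordinate-by-coordinate, with a uniform bound on $\|\psi_n\|_\infty$ provided by Cramér's inequality.

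First, since $\{\psi_n\}$ is an orthonormal basis of $L^2(\R)$, Parseval's identity gives
\begin{equation}
\norm{\widehat \rho_N^{T,\epl} - \rho_N^\epl}_{L^2(\R)}^2 = \sum_{n=0}^{N-1} \bigl(\widehat \alpha_n^{T,\epl} - \alpha_n^\epl\bigr)^2,
\end{equation}
so by Fubini,
\begin{equation}
\E \left[ \norm{\widehat \rho_N^{T,\epl} - \rho_N^\epl}_{L^2(\R)}^2 \right] = \sum_{n=0}^{N-1} \E\bigl[(\widehat \alpha_n^{T,\epl} - \alpha_n^\epl)^2\bigr].
\end{equation}
By definition of the coefficients and of $\rho^\epl$ as the invariant measure of the multiscale SDE, each summand equals
\begin{equation}
\E\left[\abs{\frac{1}{T}\int_0^T \psi_n(X_t^\epl)\dd t - \int_\R \psi_n(x)\rho^\epl(x)\dd x}^2\right],
\end{equation}
which is exactly the quantity controlled by the $\epl$-dependent mean ergodic theorem in \cref{pro:eps_mean_ergodic_theorem} (modulo the scale-transformation identity \eqref{eq:transformation_MET}) with the test function $\varphi = \psi_n$ and time horizon $T_\epl = T$.

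The next step is to make the bound uniform in $n$. The rates in \cref{pro:eps_mean_ergodic_theorem} carry a prefactor $\|\varphi\|_\infty^2$, and by Cramér's inequality (as used already in \eqref{eq:Cramer}) we have $\|\psi_n\|_\infty \le \pi^{-1/4}$ for every $n$, hence $\|\psi_n\|_\infty^2 \le \pi^{-1/2}$. Consequently each summand is bounded by $\BigO{\log(T)^{3/4}/\sqrt{\epl T}}$ when $r \ge l$ and by $\BigO{1/\sqrt{\epl T^{r/l}}}$ when $r < l$, uniformly in $n$. Summing $N$ such identical bounds yields the claimed rate
\begin{equation}
\E \left[ \norm{\widehat \rho_N^{T,\epl} - \rho_N^\epl}_{L^2(\R)}^2 \right] =
\begin{cases}
\BigO{\frac{N\log(T)^{3/4}}{\sqrt{\epl T}}}, & r \ge l, \\[4pt]
\BigO{\frac{N}{\sqrt{\epl T^{r/l}}}},        & r < l.
\end{cases}
\end{equation}

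There is essentially no hard step here: the main ingredients (the Cramér bound on Hermite functions and the $\epl$-dependent mean ergodic theorem) have already been assembled in the preceding sections. The only point worth checking carefully is that the deterministic initial condition $X_0^\epl = x_0$ matches the hypothesis $x_0^\epl = f_\epl(x_0)$ of \cref{pro:eps_mean_ergodic_theorem} after one transforms to the harmonic-scale process $\xi^\epl_t = f_\epl(X_t^\epl)$, so that the application of the ergodic theorem to each $\psi_n$ is legitimate uniformly in $n$. Once that is observed, the proof is a direct summation.
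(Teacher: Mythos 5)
Your proposal is correct and follows essentially the same route as the paper: expand in the orthonormal Hermite basis via Parseval, apply \cref{pro:eps_mean_ergodic_theorem} term-by-term with $\varphi=\psi_n$, use Cramér's inequality to make the $\|\psi_n\|_\infty$ prefactor uniform in $n$, and sum the $N$ identical bounds. The remark about matching the deterministic initial condition with $x_0^\epl = f_\epl(x_0)$ under the scale transformation is a useful (if minor) sanity check that the paper leaves implicit.
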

\begin{proof}
By definition of $\widehat \rho_N^{T,\epl}$ and $\rho_N^\epl$ in equations \eqref{eq:estimator_density} and \eqref{eq:rho_N_e_def}, respectively, and since the sequence $\{ \psi_n \}_{n=0}^\infty$ forms an orthonormal basis of $L^2(\R)$, we have
\begin{equation}
\begin{aligned}
\E \left[ \norm{\widehat \rho_N^{T,\epl} - \rho_N^\epl}_{L^2(\R)}^2 \right] &= \sum_{n=0}^{N-1} \E \left[ \left( \widehat \alpha_n^{T,\epl} - \alpha_n^\epl \right)^2 \right] = \sum_{n=0}^{N-1} \E \left[ \abs{\frac1{T} \int_0^T \psi_n(X_t^\epl) \dd t - \int_\R \psi_n(x) \rho^\epl(x) \dd x}^2 \right].
\end{aligned}
\end{equation}
The desired result then follows by applying \cref{pro:eps_mean_ergodic_theorem} and noting that $\norm{\psi_n}_\infty$ is uniformly bounded independently of $n$ due to the Cramér's inequality in equation \eqref{eq:Cramer}.
\end{proof}

Combining the results obtained in the previous sections, we are now ready to finalize the proof of \cref{thm:convergence}

\begin{proof}[Proof of \cref{thm:convergence}]
Recalling the triangle inequality from equation \eqref{eq:triangle}, we have
\begin{equation}
\E \left[ \norm{\widehat \rho_{N(\epl)}^{T(\epl),\epl} - \rho}_{L^2(\R)}^2 \right] \le 2 \left( \norm{\rho_{N(\epl)}^\epl - \rho}_{L^2(\R)}^2 + \E \left[ \norm{\widehat \rho_{N(\epl)}^{T(\epl),\epl} - \rho_{N(\epl)}^\epl}_{L^2(\R)}^2 \right] \right),
\end{equation}
which, due to \cref{lem:rates_stochastic_term} and using the specified choices for $N$ and $T$, implies that for some constant $C > 0$
\begin{equation}
\E \left[ \norm{\widehat \rho_{N(\epl)}^{T(\epl),\epl} - \rho}_{L^2(\R)}^2 \right] \le 2\norm{\rho_{N(\epl)}^\epl - \rho}_{L^2(\R)}^2 + \begin{cases}
C \abs{\log(\epl)}^{3/4} \epl^{\frac{\zeta - 5}{2}}, & \text{if } r \ge l, \\
C \epl^{\frac{r\zeta - 5l}{2l}}, & \text{if } r < l.
\end{cases}
\end{equation}
Then, the first term on the right-hand side vanishes by \cref{pro:General-Case,rem:subsequences}, while the second term tends to zero under the assumptions on $\zeta$. This completes the proof.
\end{proof}

\begin{remark}
\cref{lem:rates_stochastic_term} and the proof of \cref{thm:convergence} highlight an important difference between the number of Fourier modes $N$ and the observation time $T$. Unlike $N$, which must increase slowly relative to the inverse of the scale parameter $\epl$, the observation time $T$ must grow sufficiently fast. In other words, while $N$ is restricted and cannot grow too quickly, $T$ should be chosen large enough to ensure the convergence of the estimator.
\end{remark}

\section{Conclusion} \label{sec:conclusion}

In this work, we addressed the problem of learning the homogenized invariant measure from multiscale data. Our proposed estimator is based on a truncated Fourier series, using Hermite functions as the basis. The Fourier coefficients are estimated from the data by leveraging the ergodic theorem. To handle model apparent misspecification, we carefully selected the number of Fourier modes and the time of observation to ensure that fast-scale oscillations are not captured, and we rigorously demonstrated that this approach yields an asymptotically unbiased estimator of the invariant density.

The methodology introduced in this work can be extended in several directions and applied to a variety of inference problems. One natural extension is to the multidimensional setting, where the basis functions are constructed as tensor products of Hermite functions. Another promising direction is to establish a central limit theorem in $L^2(\R)$, which would complement the current results by proving not only asymptotic unbiasedness but also asymptotic normality of the estimator. With the machinery that is so far available to us, we are able to prove the asymptotic normality of the estimator to a Gaussian series element in a certain weak Sobolev-type Hilbert space with negative scale exponent, but proving tightness of the relevant measures in $L^2(\R)$, which would be the natural space to have here, remains unclear. Moreover, while our convergence analysis relies heavily on the specific properties of Hermite functions, the framework is, in principle, compatible with any orthonormal basis. We are therefore interested in exploring alternative bases in future work. Additionally, we aim to consider more general multiscale models, such as processes involving nonseparable fast-scale potentials, which lead to multiplicative noise, or driven by colored noise \cite{PRZ25}. We also believe that the strategy of discarding higher-order modes to mitigate model misspecification could be beneficial in other contexts where such issues arise.

Finally, this approach may also prove useful for nonparametric estimation of the drift term and/or the confining potential. In particular, the following identities hold
\begin{equation} \label{eq:estimation_drift}
\mathcal V'(x) = - \Sigma \frac{\rho'(x)}{\rho(x)} \qquad \text{and} \qquad \mathcal V(x) = - \Sigma \log(\rho(x)) - \Sigma \log(Z),
\end{equation}
which express both $\mathcal{V}'$ and $\mathcal{V}$ in terms of the invariant density $\rho$. Estimating $\mathcal{V}'$ requires the additional computation of $\rho'$, while estimating $\mathcal{V}$ necessitates an extra condition (e.g., zero mean with respect to $\rho$) to fix the additive constant. Moreover, knowledge of the homogenized diffusion coefficient $\Sigma$ is required in both cases and, therefore, must be estimated in advance. Nonparametric estimators for $\mathcal{V}'$ and $\mathcal{V}$ are obtained by substituting $\rho$ with its approximation $\widehat \rho_N^{T,\epl}$ in equation \eqref{eq:estimation_drift}. A detailed analysis of the performance and convergence of these estimators is left for future work.

\subsection*{Acknowledgements} 

The authors JIB and SK acknowledge funding by the Deutsche Forschungsgemeinschaft (DFG, German Research Foundation) - Project number 442047500 through the Collaborative Research Center ``Sparsity and Singular Structures'' (SFB 1481). AZ is supported by ``Centro di Ricerca Matematica Ennio De Giorgi'' and the ``Emma e Giovanni Sansone'' Foundation, and is a member of INdAM-GNCS. This material is based upon work supported by the National Science Foundation Graduate Research Fellowship Program under Grant No.\ DGE 2146752. Any opinions, findings, and conclusions or recommendations expressed in this material are those of the authors and do not necessarily reflect the views of the National Science Foundation.

\bibliographystyle{siamnodash}
\bibliography{biblio}

\end{document}